\documentclass[10pt]{article}
\usepackage{amssymb,amsmath,color}
\usepackage{amsthm}
\usepackage{fullpage}
\usepackage{mathrsfs}
\usepackage{units}
\parindent=0pt
\allowdisplaybreaks
\usepackage{color}

\theoremstyle{definition}
\newtheorem{definition}{Definition}

\let\oldsection\section
\renewcommand\section{\setcounter{equation}{0}\oldsection}
\renewcommand\thesection{\arabic{section}}

\hfuzz=5pt

\renewcommand{\P}{\mathscr{P}}

\newcommand{\be}{\begin{equation}\label}
\newcommand{\ee}{\end{equation}}
\newcommand{\beaa}{\begin{eqnarray}}
\newcommand{\bea}{\beaa\label}
\newcommand{\eea}{\end{eqnarray}}
\newcommand{\nn}{\nonumber}
\newcommand{\intO}{\int_\Omega}
\newcommand{\Om}{\Omega}
\newcommand{\cd}{\cdot}

\newcommand{\ep}{\varepsilon}

\newcommand{\nep}{n_\eta}
\newcommand{\uep}{u_{\eta}}
\newcommand{\cep}{c_{\eta}}
\newcommand{\Sep}{S_{\eta}}
\newcommand{\ntilde}{\widetilde{n}}
\newcommand{\wstarto}{\stackrel{\star}{\rightharpoonup}}

\newcommand{\Lp}{L^p(\Om)}
\newcommand{\Lq}{L^q(\Om)}
\newcommand{\Lin}{L^\infty(\Om)}

\newcommand{\eps}{\varepsilon}
\newcommand{\al}{\alpha}
\newcommand{\Lt}{L^\theta(\Om)}
\newcommand{\delny}{\partial_\nu}
\newcommand{\dOm}{\partial \Om}
\newcommand{\Lap}{\Delta}
\newcommand{\Ombar}{\overline{\Om}}
%just against typos
\newcommand{\my}{\mu}
\newcommand{\na}{\nabla}
\newcommand{\nbar}{\overline{n}}
\newcommand{\calL}{\mathcal{L}}
\newcommand{\bdry}{\big\rvert_{\dOm}}
\newcommand{\Tmax}{T_{max}}

\newcommand{\norm}[2][ ]{\|#2\|_{#1}}
\newcommand{\normm}[2]{\|#2\|_{#1}}%the first construction admits leaving the space unspecified, but might run into trouble with something like C([0,T]) due to the brackets [].

\newcommand{\set}[1]{\{#1\}}
\newcommand{\bigset}[1]{\left\{#1\right\}}
\newcommand{\R}{\mathbb{R}}
\newcommand{\intnt}{\int_0^t}
\newcommand{\io}{\int_\Om}

\newcommand{\calP}{\mathscr{P}}
\newcommand{\Phii}{\varphi}
\newcommand{\embeddedinto}{\hookrightarrow}

\newcommand{\N}{\mathbb{N}}
\newcommand{\sub}{\subset}
\newcommand{\phat}{\widehat{p}}
\newcommand{\supszerot}{\mathop{\sup}\limits_{s\in(0,t)}}
\newcommand{\suptzeroT}{\mathop{\sup}\limits_{t\in(0,T)}}
\newcommand{\fat}{\qquad \mbox{for all }t\in(0,T)}

\theoremstyle{plain}
\newtheorem{thm}{Theorem}[section]
\newtheorem{lem}[thm]{Lemma}%[section]
%[section]
\newtheorem{proposition}[thm]{Proposition}%[section]
\theoremstyle{definition}
\newtheorem{dnt}[thm]{Definition}%[section]
\theoremstyle{remark}
\newtheorem{remark}[thm]{Remark}%[section]
\allowdisplaybreaks

\title{Global classical small-data solutions for a three-dimensional chemotaxis Navier-Stokes system involving matrix-valued
sensitivities}
\author{Xinru Cao \thanks{Insitute for Mathematical Sciences, Renmin University of China, Zhongguancun Str. 59, 100872 Beijing, China; email: caoxinru@gmail.com}
        \quad
       Johannes Lankeit \thanks{Institut f\"ur Mathematik, Universit\"at Paderborn, Warburger Str.100, 33098 Paderborn, Germany; email: johannes.lankeit@math.uni-paderborn.de}}

\begin{document}
\maketitle
\begin{abstract}
\noindent The coupled chemotaxis fluid system
\[
\left\{
\begin{array}{llc}
n_t=\Delta n-\nabla\cdot(n S(x,n,c)\cdot\nabla c)-u\cdot\nabla n, &(x,t)\in \Omega\times (0,T),\\[6pt]
\displaystyle
c_t=\Delta c-nc-u\cdot\nabla c, &(x,t)\in\Omega\times (0,T),\\[6pt]
\displaystyle
u_t=\Delta u-(u\cdot\nabla )u+\nabla P+n\nabla\Phi,\quad \nabla\cdot u=0, &(x,t)\in\Omega\times (0,T),\\[6pt]
\displaystyle
\nabla c\cdot\nu=(\nabla n-nS(x,n,c)\cdot\nabla c)\cdot\nu=0, \;\; u=0,&(x,t)\in \partial\Omega\times (0,T),\\[6pt]
n(x,0)=n_{0}(x),\quad  c(x,0)=c_{0}(x),\quad u(x,0)=u_0(x)
& x\in\Omega,
\end{array}
\right.
\]
where $S\in (C^2(\overline{\Omega}\times [0,\infty)^2))^{N\times N}$,
is considered in a bounded domain $\Omega\subset \mathbb{R}^N$, $N\in\{2,3\}$, with smooth boundary. We show that it
has global classical solutions if the initial data satisfy certain smallness conditions and give decay properties of these solutions.\\
{\bf Keywords:} chemotaxis, Keller-Segel, Navier-Stokes, chemotaxis-fluid interaction, classical solution, global existence, global solution, decay estimates\\
{\bf Math Subject Classification (2010):} 35K55, 35B40 35Q35, 92C17, 35B35
\end{abstract}
%35B65 View Publications (1980-now) Smoothness and regularity of solutions
%35B40 View Publications (1973-now) Asymptotic behavior of solutions
% 35K55 View Publications (1973-now) Nonlinear parabolic equations
% 35Q30 View Publications (1991-now) Navier-Stokes equations [See also 76D05, 76D07, 76N10]
% 76D05 View Publications (1980-now) Navier-Stokes equations [See also 35Q30]
% 76B03 View Publications (2000-now) Incompressible, inviscid fluids: Existence, uniqueness, and regularity theory [See also 35Q35]
% 35Q35 View Publications (1991-now) PDEs in connection with fluid mechanics
% 92C17 View Publications (2000-now) Cell movement (chemotaxis, etc.)
% 35B35 View Publications (1973-now) Stability

\section{Introduction}\label{introduction}
Even simple life-forms, like certain species of bacteria, can exhibit a complex collective behaviour.
One particular biological mechanism responsible for some instances of such demeanour is that of chemotaxis, where the bacteria adapt their movement according to the concentration gradient of a particular chemical in their neighbourhood.
If this process takes place in a liquid environment, it is not unreasonable to take into account interactions with the surrounding fluid as well.
Indeed, as description for colonies of {\it bacillus subtilis}, chemotactic bacteria that are known to display organized swimming and bioconvection patterns in a fluid habitat \cite{hillesdon_pedley_kessler,metcalfe_pedley,sokolov_goldstein_feldchtein_aranson,dombrowskietal}, the following model has been suggested in \cite{Tuval}:
\begin{equation}\label{eq:system_without_rot}
\left\{
\begin{array}{llc}
 &n_t=\Delta n - \nabla\cdot(\chi(c)n\nabla c) - u\cdot \nabla n,\\[6pt]
 &c_t=\Delta c - nk(c) -u\cdot\nabla c, \\[6pt]
 &u_t=\Delta u -(u\cdot\nabla)u+\nabla P+n\nabla \Phi,\\[6pt]
 &\nabla \cdot u=0,
\end{array}
\right.
\end{equation}
where a prototypical choice for the functions $\chi$ and $k$ is $\chi(c)=const=\chi$ and $k(c)=c$.
Herein, $n$ denotes the unknown population density of bacteria that move in part randomly and in part as directed by chemotactic effects, and are transported by the surrounding fluid; $c$ denotes the concentration of oxygen, which again diffuses and is transported by the fluid, but at the same time is consumed by the bacteria.
The evolution of the velocity field $u$ of the fluid, finally, is governed by the incompressible Navier-Stokes equations, where the bacteria exert influence by means of bouyant forces due to different densities of water with a high concentration of cells versus low concentration. Using the Boussinesq approximation, this effect is incorporated into the model via the gravitational potential $\na \Phi$, $\Phi\in C^{1+\delta}(\Ombar)$ for some $\delta\in(0,1)$ being a given function. %The pressure $P$ also is an unknown function, but, as usual in the context of Navier-Stokes equations, by working in spaces of divergence free functions it can be ignored during the solution process and obtained afterwards, cf. \cite[V.1.7]{sohr_book}.
The usual boundary conditions posed along with initial conditions to complement \eqref{eq:system_without_rot} are
\[
 \delny n = \delny c=0,\qquad  u =0 \qquad \mbox{on }\partial\Om.
\]

Let us remark that in this model the chemoattractant (oxygen) is consumed and not supplied by the bacteria, which is in contrast to the celebrated Keller-Segel system of chemotaxis \cite{KS} and its variants constituting the center of extensive mathematical investigations since the 1970s, see e.g. the surveys \cite{hillen_painter,horstmann_I,BBTW} and references therein.

Since its introduction and first analytical results (asserting the local existence of weak solutions in \cite{Lorz}), also the chemotaxis-fluid system has inspired
several works addressing mainly the question of existence of classical or weak solutions (the works mentioned below) and long-term behaviour of solutions (\cite{DiFrancesco_Lorz_Markowich,Chae_Kang_Lee2,tan_zhang,wk_arma,jiang_wu_zheng,zhang_li_decay}).

%Also conditions on the inital data have been weakened (\cite{zhang_besov}, \cite{zhang_zheng})% not very relevant for this work
Due to the difficulties associated with the Navier-Stokes equations in three-dimensional domains, many of these works focus on the two-dimensional case (\cite{wk_arma,Wk_CTNS_global_largedata,TaoWk_KS-stokes-porous,zhangzheng,zhang_li_decay}) or more favorable %less complicated
variants of the model, for example by resorting to the Stokes equation upon neglection of the nonlinear convective term (\cite{Wk_CTNS_global_largedata,duan_xiang}) or by considering nonlinear instead of linear diffusion of the bacteria (\cite{Liu_Lorz,DiFrancesco_Lorz_Markowich,TaoWk_KS-stokes-porous,TaoWk_ctS3d_nonlin,duan_xiang,chung_kang_kim,vorotnikov}) and consider the three-dimensional case under smallness conditions on the initial data (\cite{Duan_Lorz_Markowich,tan_zhang,ye,Chae_Kang_Lee1}). Also in \cite{kozono}, where existence and uniqueness of mild solutions to a model including \eqref{eq:system_without_rot} as a submodel in addition to Keller-Segel-type chemotaxis, are proven for the full space in arbitrary dimensions%by means of the implicit function theorem
, a smallness assumption (in this case, in the scaling invariant space) is required.

Only recently, the existence of global weak solutions to the system \eqref{eq:system_without_rot} with large initial data has been demonstrated for bounded three-dimensional domains in \cite{wk_ctfluid3dnastoexist}, see also \cite{zhang_li} for even milder diffusion effects, followed by studies of the long-term behaviour of any such ``eventual energy solution'' \cite{wk_ns_oxytaxis}, which, namely, become smooth on some interval $[T,\infty)$ and uniformly converge in the large-time-limit. \\

With this model one further peculiar effect is still unaccounted for that can be observed in colonies of {\it Proteus mirabilis}. Colonies of these bacteria form spiralling streams that always wind counterclockwise \cite{xue_budrene_othmer}. A reason underlying this behaviour is that the swimming of the bacteria, like that of the similar species {\it E. coli}, is biased, when they are close to a surface (cf. \cite{swimmingincircles,ecoli_RHS}).
This can be reflected in chemotaxis equations by allowing for a more general, tensor-valued and spatially inhomogeneous chemotactic sensitivity, so that the model reads
\be{0}
\left\{
\begin{array}{llc}
n_t=\Delta n-\nabla\cdot(n S(x,u,v)\cdot\nabla c)-u\cdot\nabla n, &(x,t)\in \Omega\times (0,T),\\[6pt]
c_t=\Delta c-nc-u\cdot\nabla c, &(x,t)\in\Omega\times (0,T),\\[6pt]
u_t=\Delta u-(u\cdot\nabla )u+\nabla P+n\nabla\Phi,  &(x,t)\in\Omega\times (0,T),\\[6pt]
\na\cdot u=0,&(x,t)\in\Omega\times (0,T),
\end{array}
\right.
\ee
where the sensitivity ${S}(x,n,c)=({s}_{i,j})_{N\times N}$ is a matrix-valued function.
Indeed, when in \cite{xue_othmer} a macroscale model for chemotaxis is derived from a velocity jump process rooted in a cell based model incorporating a minimal description of signal transduction in single cells and accounting for this swimming bias, in the chemotaxis term a contribution perpendicular to the concentration gradient appears (\cite[(5.26)]{xue_othmer}). (For tensor-valued sensitivities arising in chemotaxis equations see also \cite[sec. 4.2.1]{othmer_hillen_II} or \cite[eq.(3.3)]{xue}.)

Mathematically, the introduction of these general sensitivities has the disadvantage that it destroys the natural energy structure coming with \eqref{eq:system_without_rot}.
In point of fact, many results concerning global existence of solutions to \eqref{eq:system_without_rot} rely on the use of an energy inequality featuring an upper estimate of
\[
 \frac{d}{dt} \left[ \io n\log n + \frac12 \io \frac{\chi(c)}{k(c)}|\na c|^2 \right] + \io \frac{|\na n|^2}n + \frac14\io \frac{k(c)}{\chi(c)}|D^2\rho(c)|^2
\]
or very similar quantities, where $\rho$ denotes a primitive of $\frac{\chi}{k}$, see \cite[Formula (3.11)]{Wk_CTNS_global_largedata}, \cite[(2.15)]{wk_arma}, \cite[(3.11)]{Duan_Lorz_Markowich} or \cite[(1.12)]{wk_ns_oxytaxis} or \cite[(3.8)]{Chae_Kang_Lee1}. For the derivation of appropriate estimates, more precisely for certain cancellations of contributions of the first and the second term in the brackets to occur, it seems to be essential that the functions $k$ and $\chi$ satisfy conditions like those given in \cite[(1.8)-(1.10)]{wk_arma}, \cite[(1.7)-(1.9)]{Wk_CTNS_global_largedata}, \cite[(A)(iii)]{Duan_Lorz_Markowich}, \cite[(1.8)]{wk_ns_oxytaxis} or even \cite[(AA), (B)]{Chae_Kang_Lee1}. There is next to no hope of transferring such delicate cancellations to the case of functions $\chi$ that are no longer scalar-valued.

Nevertheless, for some instances of such a system including a rotational sensitivity, the existence of solutions could be shown:
The fluid-free system, obtained from \eqref{0} upon setting $u\equiv 0$, possesses global classical solutions for even more general equations modeling the consumption of oxygen if posed in two-dimensional domains and under a smallness condition on initial data $c_0$. In this case, furthermore, these solutions converge to spatially homogeneous equilibria as $t\to\infty$ (\cite{lswx}). Also in the case of degenerate diffusion the existence of global bounded weak solutions was obtained for the two-dimensional fluid-free case in \cite{xinru_sachiko}. For large initial data and higher spatial dimensions, generalized solutions have been shown to exist in \cite{wk_gensolnstotensorvaluedsensitivities}.

In the presence of a Stokes-governed fluid in two-dimensional domains, global generalized solutions that become smooth eventually and stabilize were constructed in \cite{wk_2dstokesrotational}.
The existence of global weak solutions with bounded $n$-component for the full model including Navier-Stokes equations for the fluid in two-dimensional domains is asserted in \cite{sachiko_positiondep} under the assumption of porous-medium-type diffusion with exponent $m>1$ for the bacteria.

In three dimensions, the existence of a global classical solution to the model with a Stokes-governed fluid
was proven in \cite{xinru_yulan} under the hypothesis that $|S|\leq C(1+n)^{-\al}$, with some $C>0$ and $\al>\frac16$. A similar decay assumption on $S$, here with $\al>0$, made it possible to obtain global existence and boundedness of classical solutions for the same  model with  the second equation replaced by $c_t=\Delta c-n+c - u\cdot \na c$ in two-dimensional domains \cite{wang_xiang}.

In \cite{chamoun_saad_talhouk_anisotropic}, the chemotactic sensitivity and the diffusion coefficient for the bacterial motion, both being $n$- and $x$-dependent, were even assumed to vanish for $n=1$. By a semi-discretization procedure, the existence of weak solutions was established for bounded domains of dimension up to four and in the presence of either Navier-Stokes- or Stokes-fluid.

An alternative assumption prompting the existence of weak solutions in the 3D-Stokes-setting is that of nonlinear diffusion of bacteria, that is, with $\Delta n$ replaced by $\nabla\cdot(n^{m-1}\nabla n)$, with an exponent $m>\frac76$, \cite{wk_ctfluid3dnonlineargeneral}.
Also the long-term behaviour of solutions is examined there: they converge to the semi-trivial steady state.

In the present article we consider \eqref{0} without decay assumptions on $S$ and with Navier-Stokes fluid in three-dimensional domains.
The boundary conditions posed will be
\begin{equation}\label{eq:bdrycond}
 \nabla c\cdot\nu=(\nabla u-nS(x,u,v)\cdot\nabla c)\cdot\nu=0,\;\;u=0,\qquad\qquad(x,t)\in \partial\Omega\times (0,\infty),
\end{equation}
where $\nu$ denotes the outer unit normal.
We concentrate on classical solutions and therefore pose a smallness condition on the initial data. We then obtain global existence of classical solutions and exponential convergence to a constant steady state. Unlike the study of mild solutions to a Keller-Segel-Navier-Stokes system in \cite{kozono}, we are concerned with bounded domains and admit non-scalar sensitivities.

The consideration of convergence rates seems to be new in the context of tensor valued (and space-dependent) sensitivities, although convergence rates for solutions of the chemotaxis-fluid model \eqref{eq:system_without_rot} in the full space have been reported in \cite{Duan_Lorz_Markowich} and \cite{tan_zhang} and in \cite{ye} and also, for Stokes fluid, in \cite{Chae_Kang_Lee2}. The only corresponding result for bounded domains, and thus the only one giving exponential decay, is the recent work \cite{zhang_li_decay}, where two-dimensional bounded domains are considered.
In the derivation of decay estimates in \cite{zhang_li_decay}, it was possible to rely on the already established existence (\cite{Wk_CTNS_global_largedata}) and convergence (\cite{wk_arma}) of solutions. Contrasting this, in the present work we additionally have to ensure global existence of the solutions we are working with and will do so by using a continuation argument that has been used in a similar fluid-free context in \cite{W4}. Moreover, our proof will entail an  improvement of the convergence rate of the fluid component if compared to \cite{zhang_li_decay}.

For these tools and the local existence result to be employable, we will first have to restrict our course of action to the case of $S$ vanishing on the boundary. Only in a later step will we approximate fully general sensitivity functions.
With regards to this step, we will give more detailed proofs, which have not been contained in any previous works concerned with rotational sensitivities.
We will focus on the three-dimensional case. However, since it is possible without further labour, we will perform all calculations and state all results for $N\in\set{2,3}$. The only assumption we place on the domain $\Om\subset\R^N$ is that it be bounded with smooth boundary.
Results concerning bounded domains often include a convexity assumption (see e.g. \cite{Wk_CTNS_global_largedata}), which is used to cope with boundary terms stemming from integration by parts when dealing with an energy functional. By arguments relying on estimates from \cite{ishida_seki_yokota_nonconvex} or \cite{mizoguchi_souplet}, it has become possible to remove this assumption (cf. \cite{jiang_wu_zheng} or also \cite{wang_xiang,xinru_yulan,sachiko_positiondep}). Since our approach does not involve such functionals, these terms will not arise in the first place.

In order to formulate our main result, let us briefly introduce the remaining necessary part of the technical framework:
On the sensitivity function $S$ we will impose the conditions
\begin{equation}\label{eq:condS}
S\in C^2\left(\Ombar\times [0,\infty)\times [0,\infty),\R^{N\times N}\right) \quad\mbox{and}\quad |S(x,n,c)|\le C_S \quad \mbox{ for any } (x,n,c)\in \Ombar\times[0,\infty)\times[0,\infty),
\end{equation}
where $C_S$ is a given positive constant.
The initial data are assumed to satisfy
\begin{equation}\label{inidata}
\left\{
\begin{array}{llc}
&n_0\in C^0(\Ombar),\quad n_0\geq 0 \mbox{ on } \Ombar,\\[6pt]
&c_0\in W^{1,q_0}(\Om),\quad c_0>0 \text{ on } \Ombar,\\[6pt]
&u_0\in D(A^\beta),
\end{array}
\right.
\end{equation}
for some $\beta\in (\frac N4,1)$ and $q_0>N$, where $A$ denotes the ($L^2$-realization of the) Stokes operator under Dirichlet boundary conditions in $\Om$.

Here and in the following, we will denote {the first eigenvalue of $A$} by $\lambda_1'$, and by $\lambda_1$ the first nonzero eigenvalue of $-\Delta$ on $\Om$ under Neumann boundary conditions. (For more details on notation and the precise choice of $q_0$ and $\beta$ we refer to Sections \ref{sec:Prelim} and \ref{sec:constandparam} as well as Theorem \ref{thm:main}.)

For $T\in(0,\infty]$ and initial data with the smoothness indicated in \eqref{inidata}, a classical solution of \eqref{0}, \eqref{eq:bdrycond} on $[0,T)$ is a quadruple of functions $(n,c,u,P)$ satisfying \eqref{0} and \eqref{eq:bdrycond} in a pointwise sense as well as $n(\cdot,0)=n_0$, $c(\cdot,0)=c_0$, $u(\cdot,0)=u_0$ and exhibiting the following regularity properties:
\begin{equation}\label{solspace}
\left\{
\begin{array}{llc}
n\in C^0\left(\Ombar\times[0,T)\right)\cap C^{2,1}\left(\Ombar\times(0,T)\right),\\[6pt]
c\in C^0\left(\Ombar\times[0,T)\right)\cap L^\infty\left((0,T);W^{1,q_0}(\Om)\right)\cap C^{2,1}\left(\Ombar\times(0,T)\right),\\[6pt]
u\in C^0\left(\Ombar\times[0,T)\right)\cap L^\infty\left((0,T);D(A^\beta)\right)\cap C^{2,1}\left(\Ombar\times(0,T)\right),\\[6pt]
P\in C^{1,0}\left({\overline{\Omega}}\times(0,T)\right)%L^1_{loc}((0,T);W^{1,2}(\Om)). %Do not improve smoothness here. The local existence result from Winkler's paper should stay applicable. He talks about classical solutions there, this makes the better regularity possible. (Although it is not stated very clearly there. But I asked him.)
\end{array}
\right.
\end{equation}
It is called global solution if $T=\infty$.
The main result will be the following:
\begin{thm}\label{thm:main}
 Let $N\in\set{2,3}$,
 $p_0\in(\frac N2,\infty)$, $q_0\in(N,\infty)$, and $\beta\in(\frac N4,1)$. Let $m>0$, $C_S>0$, $\Phi\in C^{1+\delta}(\Ombar)$ with some $\delta>0$. Then for any $\alpha_1\in\left(0,\min\{m,\lambda_1\}\right)$ and $\alpha_2\in\left(0,\min\set{\alpha_1,\lambda_1'}\right)$ there are $\eps>0$, $C>0$  such that for any initial data $(n_0,c_0,u_0)$ fulfilling \eqref{inidata} and
 \begin{align}\label{init_small}
  \nbar_0=\frac1{|\Om|}\io n_0=m,\quad \norm[L^{p_0}(\Om)]{n_0-\nbar_0}\le \eps,\quad {
  \norm[\Lin]{c_0}\le\eps
  },\quad \norm[L^N(\Om)]{u_0}\le \eps
 \end{align}
 and any function $S$ satisfying \eqref{eq:condS}, system
 \eqref{0} with boundary condition \eqref{eq:bdrycond} and initial data $(n_0,c_0,u_0)$
  has a global classical solution, which moreover satisfies
  \[
   \norm[L^\infty(\Om)]{n(\cdot,t)-\nbar_0} \leq Ce^{-\al_1 t}, \qquad \norm[W^{1,q_0}(\Om)]{ c(\cdot,t)}\leq Ce^{-\al_1 t},\qquad \norm[L^\infty(\Om)]{u(\cdot,t)}\leq Ce^{-\al_2 t}
  \]
 for any $t>0$.
\end{thm}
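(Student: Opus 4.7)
The plan is to combine a smooth approximation of the sensitivity $S$ with a bootstrap-style continuation argument for the approximating problems, followed by passage to the limit. First, I fix a family $\{S_\eta\}_{\eta\in(0,1)}\subset C^2(\Ombar\times[0,\infty)^2,\R^{N\times N})$ vanishing in a neighbourhood of $\dOm$, converging to $S$ locally in $C^2$, and still satisfying $|S_\eta|\leq C_S$. For each $S_\eta$ the no-flux condition $(\na n-nS_\eta\cd\na c)\cd\nu=0$ reduces to $\pa_\nu n=0$, so standard parabolic and Navier--Stokes theory furnishes a maximal classical solution $(\nep,\cep,\uep)$ on $[0,T_{\max,\eta})$. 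All subsequent estimates are derived uniformly in $\eta$.

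The heart of the argument is a bootstrap. Setting $\ntilde_\eta:=\nep-m$ (which is mean-zero, since $\io\nep=m|\Om|$ is conserved for the first equation of \eqref{0}), I fix $K>0$ large and $\eps>0$ small and define
\[
T_\eta:=\sup\left\{\tau\in(0,T_{\max,\eta})\ \middle|\
\begin{array}{l}
\|\ntilde_\eta(\cd,t)\|_{L^{p_0}(\Om)}\leq K\eps\,e^{-\al_1 t},\\
\|\cep(\cd,t)\|_{L^\infty(\Om)}+\|\na\cep(\cd,t)\|_{L^{q_0}(\Om)}\leq K\eps\,e^{-\al_1 t},\\
\|\uep(\cd,t)\|_{L^N(\Om)}\leq K\eps\,e^{-\al_2 t}
\end{array}\ \text{for all }t\in[0,\tau]\right\}.
\]
On $[0,T_\eta)$ I represent each equation by Duhamel: the Neumann heat semigroup $e^{t\Delta}$ acts on $\ntilde_\eta$, where the spectral gap provides the rate $e^{-\lambda_1 t}$; the shifted semigroup $e^{t(\Delta-m)}$ acts on $\cep$ rewritten via $\cep_t-\Delta\cep+m\cep=-\ntilde_\eta\cep-\uep\cd\na\cep$ and provides the rate $e^{-mt}$; the Stokes semigroup $e^{-tA}$ with Helmholtz projection $\mathbb{P}$ acts on $\uep$ with rate $e^{-\lambda_1' t}$. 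In the $u$-equation, splitting $\nep\na\Phi=\ntilde_\eta\na\Phi+m\na\Phi$ and absorbing $\mathbb{P}(m\na\Phi)$ into the pressure, the effective buoyancy forcing is $\mathbb{P}(\ntilde_\eta\na\Phi)$, which decays at rate $e^{-\al_1 t}$.

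Applying the standard $L^p$--$L^q$ smoothing estimates for these semigroups, every nonlinear Duhamel convolution splits into \emph{quadratic} contributions in $K\eps$ (coming from $\ntilde_\eta S\na\cep$, $\uep\nep$, $\ntilde_\eta\cep$, $\uep\cd\na\cep$, $(\uep\cd\na)\uep$) and \emph{linear} contributions in $K\eps$ coming from the constant-in-$n$ pieces $mS\na\cep$ in the $n$-equation and $\ntilde_\eta\na\Phi$ in the $u$-equation. Against the time weights of the kernels, the quadratic terms are bounded by $C(K\eps)^2e^{-2\al_1 t}\leq\frac{K\eps}{4}e^{-\al_1 t}$ once $\eps$ is small, while convolving the Stokes kernel $e^{-\lambda_1' t}$ with the linear forcing of rate $e^{-\al_1 t}$ produces the rate $e^{-\al_2 t}$, which is the structural origin of the constraint $\al_2<\min\{\al_1,\lambda_1'\}$. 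The homogeneous parts $e^{t\Delta}(n_0-m)$, $e^{-mt}e^{t\Delta}c_0$ and $e^{-tA}u_0$ are controlled by \eqref{init_small}; choosing $K$ large and then $\eps$ small closes all four inequalities with a factor $\tfrac12$ to spare, so $T_\eta=T_{\max,\eta}$. Combined with the standard extensibility criterion, the resulting bounds yield $T_{\max,\eta}=\nifty$.

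The main obstacle will be the closure of the bootstrap for $\ntilde_\eta$: the term $\na\cd(\nep S\na\cep)$ contains the linear piece $m\na\cd(S\na\cep)$, whose convolution with $e^{t\Delta}\na\cd$ must be estimated without losing the exponential rate, which forces a delicate interplay between the parameters $p_0>\tfrac{N}{2}$, $q_0>N$ and $\beta>\tfrac{N}{4}$ and is the source of the parameter restrictions in Theorem~\ref{thm:main}. Once the $L^{p_0}$, $W^{1,q_0}$ and $L^N$ decay has been established, one further round of Duhamel upgrades the first and third bounds to the $L^\infty$ decay stated in the conclusion. Finally, the uniform-in-$\eta$ estimates together with interior parabolic Schauder theory and an Aubin--Lions compactness argument produce a subsequence $(\nep,\cep,\uep)\to(n,c,u)$ converging to a classical solution of \eqref{0}, \eqref{eq:bdrycond} for the original $S$, and the decay estimates pass to the limit by lower semicontinuity of the norms involved.
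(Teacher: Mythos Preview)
Your overall architecture---approximate $S$ by sensitivities $S_\eta$ vanishing near $\dOm$, run a Duhamel bootstrap on the approximating problems uniformly in $\eta$, then pass to the limit by compactness---is exactly the paper's strategy. The difficulty lies in the specific bootstrap you propose, which as written does not close for two separate reasons.

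\textbf{First}, you include $\|\na\cep(\cdot,t)\|_{L^{q_0}}\le K\eps\,e^{-\al_1 t}$ in the definition of $T_\eta$ with the bound required for \emph{all} $t\in[0,\tau]$. At $t=0$ this demands $\|\na c_0\|_{L^{q_0}}\le K\eps$, but the hypotheses \eqref{inidata}, \eqref{init_small} only give $c_0\in W^{1,q_0}(\Om)$ and $\|c_0\|_{\Lin}\le\eps$; there is no smallness of $\na c_0$. Since $K$ and $\eps$ must be fixed before the data, the bootstrap set can be empty. The paper avoids this by putting time-singular weights into the bootstrap: it tracks $\|\na c\|_{\Lin}\le M_2\eps(1+t^{-1/2})e^{-\al_1 t}$ (and similarly $\|u\|_{L^{q_0}}$, $\|\na u\|_{L^N}$ with singular weights), so the constraints are vacuous as $t\to0^+$. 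The uniform $W^{1,q_0}$-bound on $c$ is recovered only \emph{after} the bootstrap, by patching the $t\ge t_0$ estimate with the local-existence bound on $[0,t_0]$.

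\textbf{Second}, tracking only the scaling-critical norm $\|\uep\|_{L^N}$ does not control the convection term. Writing $(u\cdot\na)u=\na\cdot(u\otimes u)$ and using the Stokes $\na\cdot$-smoothing with $\|u\otimes u\|_{L^{N/2}}\le\|u\|_{L^N}^2$ produces a kernel $(t-s)^{-\frac12-\frac N2(\frac2N-\frac1N)}=(t-s)^{-1}$, which is not locally integrable; the quadratic estimate you claim, $C(K\eps)^2e^{-2\al_2 t}$, therefore does not follow. This is the classical criticality obstruction for Navier--Stokes, and the paper sidesteps it by bootstrapping on the \emph{subcritical} pair $\|u\|_{L^{q_0}}$ (with weight $1+t^{-\frac12+\frac N{2q_0}}$) and $\|\na u\|_{L^N}$ (with weight $1+t^{-\frac12}$), so that $\|u\cdot\na u\|_{L^r}\le\|u\|_{L^{q_0}}\|\na u\|_{L^N}$ with $\frac1r=\frac1{q_0}+\frac1N$ leads to an integrable kernel. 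A related issue affects your $n$-bootstrap: the paper tracks $\|n-e^{t\Delta}n_0\|_{L^\theta}$ for all $\theta\in[q_0,\infty]$, not $\|n-m\|_{L^{p_0}}$, precisely so that $\|n\|_{L^{q_0}}$ and $\|n\|_{\Lin}$ are available when estimating $nS\na c$ and $u\cdot\na n$.

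In short, the plan is right but the chosen bootstrap norms are too weak and lack the time-singular weights needed both to accommodate the initial data and to render the convolution kernels integrable.
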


Condition \eqref{init_small} in Theorem \ref{thm:main} could be replaced by
 \begin{align}\label{init_small'}
{ \nbar_0=\frac1{|\Om|}\io n_0=m,\quad\norm[L^{p_0}(\Om)]{n_0}\le \eps,\quad
  \norm[L^{N}(\Om)]{\nabla c_0}\le\eps,\quad \norm[L^N(\Om)]{u_0}\le \eps}.
 \end{align}
without affecting the validity of the Theorem, thus exchanging conditions asking for the smallness of oxygen concentration and some kind of uniformity in the distribution of bacteria by conditions that indicate smallness of the bacterial concentration and a somewhat homogeneous dispersion of oxygen.
Let us state this alternative variant:
\begin{thm}\label{thm:alternative}
 Let $N\in\set{2,3}$, $p_0\in(\frac N2,N)$, $q_0\in(N,(\frac1{p_0}-\frac1N)^{-1})$, and $\beta\in(\frac N4,1)$. Let $M>0$, $C_S>0$, $\Phi\in C^{1+\delta}(\Ombar)$ with some $\delta>0$. Then there exist $\eps>0$, $m_0<\eps|\Om|^{-\frac1{p_0}}$ such that for any $m>m_0$, any $\alpha_1\in(0,\min\{m,\lambda_1\})$ and $\alpha_2\in(0,\min\set{\alpha_1,\lambda_1'})$ there is $C>0$ such that for any initial data $(n_0,c_0,u_0)$ fulfilling \eqref{inidata}, \eqref{init_small'} and $\norm[L^\infty(\Om)]{c_0}=M$
 and any function $S$ satisfying \eqref{eq:condS}, system
 \eqref{0} with boundary condition \eqref{eq:bdrycond} and initial data $(n_0,c_0,u_0)$
  has a global classical solution, which moreover satisfies
  \[
   \norm[L^\infty(\Om)]{n(\cdot,t)-\nbar_0} \leq Ce^{-\al_1 t}, \qquad \norm[W^{1,q_0}(\Om)]{ c(\cdot,t)}\leq Ce^{-\al_1 t},\qquad \norm[L^\infty(\Om)]{u(\cdot,t)}\leq Ce^{-\al_2 t}
  \]
 for any $t>0$.
\end{thm}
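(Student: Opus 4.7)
The strategy is to re-run the proof of Theorem \ref{thm:main} with only mild modifications, exploiting two observations. First, the hypothesis $\norm[L^{p_0}(\Om)]{n_0}\le\eps$ together with $\nbar_0=m<\eps|\Om|^{-1/p_0}$ yields, by the triangle inequality in $L^{p_0}(\Om)$,
\[
\norm[L^{p_0}(\Om)]{n_0-\nbar_0}\le \norm[L^{p_0}(\Om)]{n_0}+m|\Om|^{1/p_0}\le 2\eps,
\]
so the original $n$-smallness from \eqref{init_small} is automatic. Second, the chemotactic forcing in the $n$-equation depends on $\na c$ rather than on $c$ itself; accordingly, the core a priori estimate for $\norm[L^{p_0}(\Om)]{n-\nbar_0}$ can equally well be driven by smallness of $\norm[L^N(\Om)]{\na c_0}$ in place of $\norm[\Lin]{c_0}$. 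The restriction $q_0\in(N,(\frac{1}{p_0}-\frac{1}{N})^{-1})$ is precisely what keeps the relevant H\"older and Sobolev chains admissible for this substitution.

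I would perform the same bootstrap as in Theorem \ref{thm:main}, tracking the triple
\[
\norm[L^{p_0}(\Om)]{n(\cdot,t)-m},\qquad \norm[L^{q_0}(\Om)]{\na c(\cdot,t)},\qquad \norm[L^2(\Om)]{A^\beta u(\cdot,t)},
\]
weighted by $e^{\al_1 t}$ or $e^{\al_2 t}$ for the decay step. Wherever the earlier argument relied on $\norm[\Lin]{c}$ being small, one now uses only the uniform bound $\norm[\Lin]{c(\cdot,t)}\le M$ provided by the maximum principle for $c_t=\Delta c-nc-u\cd\na c$ (via $n\ge 0$, $\na\cd u=0$, and the Dirichlet condition on $u$), and pairs it via H\"older with smallness of the remaining factor. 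Elsewhere the estimates are left unchanged: replacing ``smallness of $c$'' by ``boundedness of $c$ combined with smallness of its partner factor'' preserves the structure of the differential inequalities that close the bootstrap.

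Once the bootstrap is closed, global existence and the decays $\norm[\Lin]{n(\cdot,t)-\nbar_0}\le Ce^{-\al_1 t}$ and $\norm[L^{q_0}(\Om)]{\na c(\cdot,t)}\le Ce^{-\al_1 t}$ follow exactly as in Theorem \ref{thm:main}. To upgrade decay of $c$ from its gradient to its full $W^{1,q_0}(\Om)$-norm, I would use that once $\norm[\Lin]{n(\cdot,t)-m}<m/2$ at some time $t_0$, pointwise comparison in the $c$-equation yields $\norm[\Lin]{c(\cdot,t)}\le Me^{-(m/2)(t-t_0)}$ for $t\ge t_0$, which together with a Poincar\'e-type argument gives the stated rate (consistent with $\al_1<\min\{m,\lambda_1\}$). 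Decay of $u$ at rate $\al_2$ follows, via the Duhamel formula for the Stokes semigroup, from the just-established decay of $n\na\Phi$ and the smallness of $u_0$.

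The main obstacle is that the non-small datum $c_0$ prevents a direct transfer of the estimates of Theorem \ref{thm:main}: the source $-nc$ in the $c$-equation is only bounded, not small, so in the variation-of-constants formula for $\na c$ one cannot treat it as a perturbative term. The trick is to split $n=m+(n-m)$ and view $-mc$ as a linear damping that produces exponential decay at rate $m$ on its own, while the fluctuation $-(n-m)c$ is handled through Neumann heat semigroup smoothing combined with the smallness of $n-m$ extracted from the bootstrap. Threading this decomposition so that $\norm[L^{q_0}(\Om)]{\na c(\cdot,t)}$ remains among the small bootstrap quantities, with a decay exponent not worse than $\al_1$, is the technical heart of the proof.
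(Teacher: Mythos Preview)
Your proposal is correct and follows the paper's route closely. The paper's Remark \ref{rem:otherconditions} records precisely your two opening observations: (i) from \eqref{init_small'} one gets $m\le\eps|\Om|^{-1/p_0}$ and hence $\norm[L^{p_0}(\Om)]{n_0-\nbar_0}\le 2\eps$, so Lemma \ref{lem:prelimestimaten} and everything downstream survive with $k_1$ replaced by $2k_1$; (ii) in Lemma \ref{lem:estimatenac} the initial-data term is handled by Lemma \ref{lem:heat}(iii) with $\norm[L^N(\Om)]{\na c_0}$ in place of Lemma \ref{lem:heat}(ii) with $\norm[\Lin]{c_0}$. The bootstrap quantities and the passage to general $S$ are unchanged.

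Your last paragraph slightly overcomplicates the treatment of the source $nc$. You are right that the damping $-mc$ is what produces decay of $\norm[\Lin]{c}$ — but this is already the content of Lemma \ref{lem:estimatec}, whose ODE argument yields $\norm[\Lin]{c(\cdot,t)}\le M e^{(M_1+2k_1)\sigma\eps}e^{-mt}$ without any change. There is no need to build $-mc$ into the semigroup for the $\na c$ Duhamel estimate: since $m\le \eps|\Om|^{-1/p_0}$, the \emph{entire} $\norm[\Lin]{n}$ is small, namely $\norm[\Lin]{n}\le\nbar_0+(M_1+2k_1)\eps\le (|\Om|^{-1/p_0}+M_1+2k_1)\eps$, so $\norm[\Lin]{nc}$ is $O(\eps M)$ — genuinely small, with the factor $M$ absorbed into the choice of $M_2$ (this is Lemma \ref{lem:chooseM_alternative} in the appendix). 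Thus your claim that ``$-nc$ is only bounded, not small'' is the one inaccuracy; once you use the smallness of $m$ itself, the standard heat semigroup suffices and no splitting inside the Duhamel formula is needed.
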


\begin{remark}
 The condition $m_0<\eps|\Om|^{-\frac1{p_0}}$ ensures the
 existence of initial data to which the theorem is applicable. For
 $m>\eps|\Om|^{-\frac1{p_0}}$ the conditions in \eqref{init_small'} cannot be
 satisfied simultaneously.
\end{remark}

We will not give a separate proof for Theorem \ref{thm:alternative} in detail, since it is very similar to that of Theorem \ref{thm:main}.
In Remark \ref{rem:otherconditions} at the end of Section \ref{sec:Thm} we will indicate the necessary changes in the proof; an appropriately adapted version of Lemma \ref{lem:chooseM} will be given in the Appendix.

In order to derive these theorems, we will begin in Section \ref{sec:Prelim} by recalling or providing a local existence result and some useful estimates.
In Section \ref{sec:constandparam}, we will then ensure the applicability of these estimates and fix constants and parameters that will make it possible to prove Proposition \ref{prop:Szero}, which is Theorem \ref{thm:main} for $S=0$ on the boundary. In Section \ref{sec:generalS} we ensure sufficient boundedness in appropriate spaces to pass to the limit in an approximation procedure for more general sensitivity functions so that the last part of that section, finally, can be devoted to the proof of Theorem \ref{thm:main}.

\section{Preliminaries}
\label{sec:Prelim}
The purpose of this section is to provide the ground for estimates needed in the global existence proof.
Due to the central importance of semigroups in this undertaking,
we begin by recalling $L^p$-$L^q$ estimates for the Neumann heat semigroup as given in \cite[Lemma 1.3]{W4}. %In fact, we include a small improvement on the statements in part (iii) and (iv).
Here and in the following, %by $\lambda_1$ we will denote the first nonzero eigenvalue of $-\Delta$ on $\Om$ under Neumann boundary conditions and
by $(e^{t\Delta})_{t>0}$ we will denote the Neumann heat semigroup in the domain $\Om$.
\begin{lem}\label{lem:heat}
There exist $k_1,...,k_4>0$ which only
depend on $\Om$ and which have the following properties:\\
{\rm(i)} If $1\le q\le p\le\infty$, then
\bea{1.1}
\|e^{t\Delta}w\|_{\Lp}\le k_1\left(1+t^{-\frac{N}{2}
(\frac{1}{q}-\frac{1}{p})}\right)e^{-\lambda_1 t}\|w\|_{\Lq}
\text{ for all }t>0
\eea
holds for all $w\in\Lq$ with $\intO w=0$.\\
{\rm(ii)} If $1\le q\le p\le\infty$, then
\bea{1.2}
\|\nabla e^{t\Delta}w\|_{\Lp}\le k_2\left(1+t^{-\frac{1}{2}-\frac{N}{2}
(\frac{1}{q}-\frac{1}{p})}\right)e^{-\lambda_1 t}\|w\|_{\Lq}
\text{ for all }t>0
\eea
holds for each $w\in\Lq$.\\
{\rm(iii)} If $2\le q\le p\le\infty$, then
\bea{1.3}
\|\nabla e^{t\Delta}w\|_{\Lp}\le k_3\left(1+t^{-\frac{N}{2}(\frac{1}{q}-\frac{1}{p})}\right)e^{-\lambda_1 t}
\|\nabla w\|_{\Lq}
\text{ for all }t>0
\eea
is true for all $w\in W^{1,p}(\Om)$.\\
{\rm(iv)} Let $1<q\le p<\infty$ or $1<q<\infty$ and $p=\infty$, then
\bea{1.4}
\|e^{t\Delta}\nabla\cdot w\|_{\Lp}\le k_4\left(1+t^{-\frac{1}{2}
-\frac{N}{2}\left(\frac{1}{q}-\frac{1}{p}\right)}\right)e^{-\lambda_1 t}\|w\|_{\Lq}
\text{ for all }t>0
\eea
is valid for any $w\in (L^q(\Om))^N$.
\end{lem}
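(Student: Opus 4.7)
The plan is to decompose each of the four estimates into a short-time regime $t\in(0,1]$, where the singular factor on the right-hand side is what carries the weight, and a long-time regime $t>1$, where the exponential $e^{-\lambda_1 t}$ absorbs any polynomial growth. The prefactor $1+t^{-\alpha}$ is designed precisely to behave like $t^{-\alpha}$ for $t\to 0^+$ and like a constant on $[1,\infty)$, so it suffices to separately prove a singular short-time bound and a purely exponential long-time bound for each of (i)--(iv), and then glue the two regimes via the trivial $\max(t^{-\alpha},1)\le 1+t^{-\alpha}$.

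For short times I would invoke the standard Gaussian heat kernel estimates for the Neumann heat semigroup on a smooth bounded domain: on $(0,1]$ the kernel $K(t,x,y)$ of $e^{t\Delta}$ satisfies $K(t,x,y)\le C t^{-N/2}e^{-c|x-y|^2/t}$ together with $|\na_x K(t,x,y)|\le C t^{-(N+1)/2}e^{-c|x-y|^2/t}$ (see e.g.\ the theory of Davies for uniformly elliptic operators on bounded smooth domains). Young's convolution inequality then yields the desired $\Lq$-$\Lp$ smoothing estimates both for $e^{t\Delta}$ and for $\na e^{t\Delta}$, giving the short-time versions of (i), (ii) and, by the duality identity $\intO(e^{t\Delta}\na\cd w)\varphi = -\intO w\cd \na e^{t\Delta}\varphi$, also of (iv). For (iii) in the short-time regime one combines (ii) with the fact that $e^{t\Delta}$ commutes with differentiation for data satisfying the Neumann boundary condition, reducing to (i) applied to $\na w$ after splitting $w$ into its mean and mean-free parts.

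For long times the crux is the spectral gap: on the closed subspace $L^2_0(\Om):=\{f\in L^2(\Om):\intO f=0\}$ the self-adjoint generator $-\Delta_N$ has spectrum in $[\lambda_1,\infty)$, whence $\|e^{t\Delta}f\|_{L^2(\Om)}\le e^{-\lambda_1 t}\|f\|_{L^2(\Om)}$ for such $f$. For $t\ge 1$ I would then write $e^{t\Delta}w = e^{(t-1)\Delta}(e^{\Delta}w)$ and chain the short-time ultracontractive bounds $\Lq\to L^2(\Om)$ and $L^2(\Om)\to\Lp$ with this $L^2$-exponential decay to obtain $\|e^{t\Delta}w\|_{\Lp}\le Ce^{-\lambda_1(t-1)}\|w\|_{\Lq}$ in (i), where the zero-mean hypothesis is exactly what removes the constant component of $w$. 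Analogous splittings $e^{t\Delta}=e^{(t/2)\Delta}e^{(t/2)\Delta}$ combined with the short-time gradient bounds handle (ii), (iii) and (iv) in the long-time regime.

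The main technical subtlety will be transferring the $L^2$-spectral gap uniformly to all $\Lp$, including the endpoint cases $p=\infty$ and $q=1$, and in ensuring that the constants coming from the two regimes can be absorbed into single constants $k_1,\dots,k_4$. A clean way to avoid an intricate interpolation is to observe that $(e^{t\Delta})_{t\ge 0}$ extends to a bounded analytic semigroup on $\Lp$ for every $p\in[1,\infty]$ whose generator has compact resolvent with spectrum independent of $p$; its restriction to the mean-free subspace therefore has spectral bound $-\lambda_1$ in each $\Lp$, and standard semigroup theory delivers the exponential decay in $\Lp$ directly, without an $L^2$ detour. This, combined with the short-time Gaussian bounds, yields the stated estimates in the precise form claimed.
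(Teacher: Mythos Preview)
The paper does not prove this lemma; it simply cites \cite[Lemma 1.3]{W4} for (i), (ii) and part of (iii), (iv), and \cite[Lemma 2.1]{Cao2014Global} for the remaining cases. Your proposal, by contrast, sketches an actual argument, and the two-regime strategy (short-time Gaussian kernel bounds, long-time $L^2$ spectral gap chained with ultracontractivity) is the standard and correct route for (i), (ii) and (iv). The duality reduction of (iv) to (ii) is exactly right, and your remark about $p$-independence of the spectrum is a clean way to handle the endpoint exponents.

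There is, however, a genuine gap in your treatment of (iii). You write that ``$e^{t\Delta}$ commutes with differentiation for data satisfying the Neumann boundary condition'' and then reduce to (i) applied to $\nabla w$. This fails on a bounded domain: even if $\partial_\nu w=0$ on $\partial\Om$, the components $\partial_i w$ need not satisfy $\partial_\nu(\partial_i w)=0$, so $\nabla e^{t\Delta}w\neq e^{t\Delta}\nabla w$ in general. Moreover, (i) requires a mean-zero scalar, whereas $\int_\Om \partial_i w=\int_{\partial\Om}w\,\nu_i$ is typically nonzero, so (i) does not apply componentwise to $\nabla w$. The long-time half of (iii) can still be obtained from (ii) together with Poincar\'e's inequality $\|w-\bar w\|_{L^q(\Om)}\le C\|\nabla w\|_{L^q(\Om)}$, but for $t\le 1$ that route produces a spurious factor $t^{-1/2}$. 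A correct short-time argument for (iii) instead uses the equivalence $\|\nabla v\|_{L^q(\Om)}\simeq\|(-\Delta_N)^{1/2}v\|_{L^q(\Om)}$ for $q\ge 2$ (boundedness of the Riesz transform associated with the Neumann Laplacian), after which the legitimate commutation $(-\Delta_N)^{1/2}e^{t\Delta}=e^{t\Delta}(-\Delta_N)^{1/2}$ reduces matters to (i); this is less elementary than what you wrote, which is presumably why the paper defers to references rather than spelling it out.
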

\begin{proof}
This is \cite[Lemma 1.3]{W4}. The parts of Cases (iii) and (iv) which are missing there, are proven in \cite[Lemma 2.1]{Cao2014Global}.
\end{proof}

Because of the third equation in \eqref{0}, the Neumann Laplacian is not the only operator generating a semigroup which is important for analyzing the solutions of \eqref{0}. Before introducing the Stokes operator and recalling estimates for the corresponding semigroup, however, let us briefly familiarize ourselves with  the appropriate spaces.

For $p\in (1,\infty)$ the spaces of solenoidal vector fields are defined as the $L^p$-closure of the set of divergence-free smooth vector fields:
\[
 L^p_{\sigma}(\Om) = \overline{C_{0,\sigma}^\infty(\Om,\R^N)}^{\norm[L^p(\Om)]{\cdot}}= \overline{\set{\Phii\in C_0^\infty(\Om,\R^N); \na\cdot \Phii=0}}^{\norm[L^p(\Om)]{\cdot}}.
\]
Indeed, the space $L^p(\Om,\R^N)$ is the direct sum of this solenoidal space and a space $\set{\nabla \Phii; \Phii\in W^{1,p}(\Om)}$ consisting of gradients and there exists a projection from $L^p(\Om,\R^N)$ onto $L^p_\sigma(\Om)$, the so-called Helmholtz projection $\mathscr {P}$. More precisely, we have the following:
\begin{lem}\label{lem:helmholtzbounded}
 The Helmholtz projection $\calP $ defines a bounded linear operator $\calP \colon L^p(\Om,\R^N)\to L^p_\sigma(\Om)$;
 in particular, for any $p\in (1,\infty)$ there is $k_5(p)>0$ such that
 \[
  \norm[L^p(\Om)]{\calP  w}\leq k_5(p)\norm[L^p(\Om)]{w}
 \]
 for every $w\in L^p(\Om)$.
\end{lem}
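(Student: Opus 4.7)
The plan is to construct $\mathscr{P}$ by means of the classical Helmholtz decomposition of $L^p(\Omega,\mathbb{R}^N)$: for bounded domains with smooth boundary this decomposition is by now well known, having been established in full $L^p$ generality by Fujita--Morimoto and Simader--Sohr. The most economical route to writing the proof is simply to cite this fact, but it is worth sketching how $\mathscr{P}$ arises concretely.

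Given $w \in L^p(\Omega,\mathbb{R}^N)$, I would first solve the weak Neumann problem: find $\varphi \in W^{1,p}(\Omega)$ (unique up to additive constants) such that
\[
\int_\Omega \nabla\varphi\cdot\nabla\psi \;=\; \int_\Omega w\cdot\nabla\psi \qquad\text{for all }\psi\in W^{1,p'}(\Omega),
\]
where $1/p+1/p'=1$. The $L^p$-theory for this Neumann problem on smooth bounded domains guarantees existence together with the a priori bound $\|\nabla\varphi\|_{L^p(\Omega)}\leq C_p\|w\|_{L^p(\Omega)}$. Setting $\mathscr{P}w := w-\nabla\varphi$, testing the identity above against $\psi\in C_c^\infty(\Omega)$ yields $\nabla\cdot(\mathscr{P}w)=0$ as a distribution, while testing against general $\psi\in C^\infty(\overline{\Omega})$ encodes the weak no-flux condition $(\mathscr{P}w)\cdot\nu=0$ on $\partial\Omega$; a standard approximation by divergence-free smooth fields then places $\mathscr{P}w$ into $L^p_\sigma(\Omega)$. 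Linearity of $\mathscr{P}$ follows from the uniqueness of $\nabla\varphi$, and the triangle inequality gives
\[
\|\mathscr{P}w\|_{L^p(\Omega)} \;\leq\; \|w\|_{L^p(\Omega)} + \|\nabla\varphi\|_{L^p(\Omega)} \;\leq\; (1+C_p)\|w\|_{L^p(\Omega)},
\]
which is the desired estimate with $k_5(p) := 1+C_p$.

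The only genuinely nontrivial ingredient is the $L^p$-solvability (with the accompanying estimate) of the Neumann problem for general $p\in(1,\infty)$: in the Hilbert case $p=2$ this follows immediately from Lax--Milgram applied to the gradient form on $W^{1,2}(\Omega)/\mathbb{R}$, but extending to $p\neq 2$ requires Calder\'on--Zygmund-type singular integral machinery. As this is a standard fact for bounded domains with smooth boundary, the pragmatic course here is to invoke the reference rather than reprove it; conceptually, however, this step \emph{is} the heart of the matter, everything else being bookkeeping.
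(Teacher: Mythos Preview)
Your proposal is correct and aligns with the paper's approach: the paper simply cites \cite[Thm.~1 and Thm.~2]{fujiwara_morimoto} for this fact, and your sketch of the Neumann-problem construction of $\mathscr{P}$ is precisely the content of that reference. The additional detail you provide is sound and indeed identifies the $L^p$-Neumann solvability as the nontrivial core, but for the purposes of this paper a bare citation suffices.
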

\begin{proof}
 See \cite[Thm. 1 and Thm. 2]{fujiwara_morimoto}.
\end{proof}
The Stokes operator on $L^p_\sigma(\Om)$ is defined as $A_p=-\calP \Lap$ with domain $D(A_p)=W^{2,p}(\Om)\cap W^{1,p}_0(\Om) \cap L^p_\sigma(\Om)$.  Since $A_{p_1}$ and $A_{p_2}$ coincide on the intersection of their domains for $p_1,p_2\in(1,\infty)$, we will drop the index $p$ in the following without fearing confusion.
This operator generates a semigroup for which estimates similar to the previous ones hold:

\begin{lem}\label{lem:stokes}
The Stokes operator $A$ generates the analytic semigroup $(e^{-tA})_{t>0}$ in $L^r_{\sigma}(\Om)$. Its spectrum satisfies $\lambda_1':=\inf {\rm Re}\, \sigma(A)>0$ and we fix $\mu\in(0,\lambda_1')$. For any such $\mu$, the following holds:\\
(i) For any $p\in(1,\infty)$ and $\gamma\geq 0$ there is $k_6(p,\gamma)>0$ such that
\beaa\label{2.0}
\norm[L^p(\Om)]{A^\gamma e^{-tA} \phi} \leq k_6(p,\gamma) t^{-\gamma}e^{-\mu t}\norm[L^p(\Om)]{\phi}
\eea
holds for all $t>0$ and all $\phi\in L^p_\sigma(\Om)$.\\
(ii) For $p,q$ satisfying $1< p\le q<\infty$ there exists $k_7(p,q)>0$ such that
\beaa\label{2.1}
\|e^{-tA}\phi\|_{\Lq}\le k_7(p,q) t^{-\frac{N}{2}\left(\frac{1}{p}-\frac{1}{q}\right)}e^{-\mu t} \|\phi\|_{\Lp}
\eea
holds for all $t>0$ and all $\phi\in L^p_\sigma(\Om)$.\\
(iii) For any $p,q$ with $1< p\le q<\infty$ there is $k_8(p,q)>0$ such that for all $t>0$ and $\phi\in L^p_\sigma(\Om)$
\bea{2.4'}
\|\nabla e^{-tA}\phi\|_{L^q(\Om)}\le k_8(p,q)t^{-\frac{1}{2}-\frac{N}{2}\left(\frac{1}{p}-\frac{1}{q}\right)}e^{-\mu t}\|\phi\|_{\Lp}.
\eea
(iv) If $\gamma\geq0$ and $1<q<p<\infty$ satisfy $2\gamma-\frac Nq\geq 1-\frac Np$, then there is $k_9(\gamma,p,q)$ such that for all $\phi\in D(A^\gamma_q)$
\begin{equation}\label{eq:stokesdomainembeddingw}
 \norm[W^{1,p}(\Om)]{\phi} \leq k_9(\gamma,p,q) \norm[L^q(\Om)]{A^\gamma \phi}.
\end{equation}
\end{lem}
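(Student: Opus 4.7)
All four statements are classical facts about the Stokes semigroup on a bounded smooth domain, so the plan is not to reprove them from scratch but to assemble them from well-known building blocks in the spirit of Giga, Giga--Sohr and Sohr. The crucial input is that in such a domain $A$ has compact resolvent, its spectrum is discrete and contained in $\{z\in\mathbb{C}:\mathrm{Re}\,z\ge\lambda_1'\}$ for some $\lambda_1'>0$, and $-A$ generates a bounded analytic semigroup on $L^r_\sigma(\Om)$ for every $r\in(1,\infty)$. The exponential factor $e^{-\mu t}$ in each estimate will be inserted by a splitting argument: for $s\ge 1$ the spectral gap gives $\|e^{-sA}\|_{L^r_\sigma\to L^r_\sigma}\le Ce^{-\mu s}$ for any $\mu<\lambda_1'$, while on $(0,1]$ analyticity yields polynomial short-time bounds; composing a short-time factor at $t/2$ with a long-time factor at $t/2$ produces the combined form $t^{-\sigma}e^{-\mu t}$.

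For part (i), analyticity on $L^p_\sigma(\Om)$ directly gives $\|A^\gamma e^{-tA}\|\le C_\gamma t^{-\gamma}$ on $(0,1]$, and the extension to all $t>0$ is via the splitting $A^\gamma e^{-tA}=A^\gamma e^{-(t/2)A}\cdot e^{-(t/2)A}$. Part (ii) is the classical $L^p$--$L^q$ smoothing estimate for the Stokes semigroup due to Giga, and the same $t/2+t/2$ decomposition inserts the exponential factor as before.

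For (iii), I would use the well-known boundedness of $\nabla A^{-1/2}$ on $L^q_\sigma(\Om)$ (equivalently, $D(A^{1/2}_q)\hookrightarrow W^{1,q}_0(\Om)$ for $q\in(1,\infty)$) and write
\[
\nabla e^{-tA}\phi = \bigl(\nabla A^{-1/2}\bigr)\,A^{1/2} e^{-(t/2)A}\,e^{-(t/2)A}\phi,
\]
so that the last two operators are controlled by (i) (with $\gamma=1/2$) and (ii) respectively, yielding both the correct power of $t$ and the exponential decay.

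For (iv) the statement is a Sobolev-type embedding for fractional Stokes domains: on bounded smooth domains one has the characterization $D(A^\gamma_q)\hookrightarrow H^{2\gamma,q}(\Om)$, after which the standard Sobolev embedding $H^{2\gamma,q}(\Om)\hookrightarrow W^{1,p}(\Om)$ applies precisely under the sharp scaling condition $2\gamma-N/q\ge 1-N/p$. The main obstacle is bookkeeping rather than analysis: one must collate the four parts from several standard references with consistent notation, and verify that the boundedness of $\nabla A^{-1/2}$ and the fractional-domain embedding are actually in effect under the stated ranges of $p$, $q$ and $\gamma$; beyond this, no new ideas are needed.
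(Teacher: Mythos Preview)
Your proposal is correct and follows essentially the same route as the paper: analyticity for (i), the Giga--Miyakawa embedding $D(A_q^\gamma)\hookrightarrow H^{2\gamma}_q$ combined with Sobolev-type embeddings for (iv), and a combination of these ingredients for (ii) and (iii). The only cosmetic difference is that for (iii) you factor through $\nabla A^{-1/2}$ and then invoke (i) and (ii), whereas the paper derives both (ii) and (iii) directly from (i) via the single embedding $D(A_p^{\frac12+\frac N2(\frac1p-\frac1q)})\hookrightarrow W^{1,q}(\Om)$; both packagings amount to the same underlying fact.
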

\begin{proof} That $A$ generates an analytic semigroup in $L^r_\sigma(\Om)$ was shown in \cite{giga_stokesanalyticinLr}. The estimate in (i) for its  fractional powers is a consequence of this fact, see \cite[Def. 1.4.7 and Theorem 1.4.3]{henry}. Estimates like those in (ii) and (iii) constitute another well-known property of the Stokes semigroup, see e.g. \cite[Chapter 6]{Wie} or \cite[p.201]{giga_semilinear}. They can be proven by combining the Sobolev type embedding theorem and an embedding result for domains of fractional powers of $A$  with estimates as in (i).
 %for the gamma in question to be <1 we need $p\geq N/2$. But why should we want to have this?
Namely, according to \cite[Prop. 1.4]{giga_miyakawa}, $D(A_r^\gamma)\embeddedinto H_r^{2\gamma}$ for any $\gamma\geq 0$, where $H_r^{2\gamma}=F_{r,2}^{2\gamma}$ is a Bessel potential space. Such spaces are covered by the embedding theorem \cite[Thm. 3.3.1 (ii)]{Triebel}, which states that
$F^{s_0}_{p_0,q_0}(\Om)\embeddedinto F_{p_1,q_1}^{s_1}(\Om)$, if $s_0-\frac{n}{p_0} \geq s_1 - \frac{n}{p_1}$, $0<p_0<\infty$, $0<p_1<\infty$, $0<q_0\leq\infty$, $0<q_1\leq \infty$ and $-\infty<s_1<s_0<\infty$. In particular,
\[
  D(A^{\frac n2(\frac1p-\frac1q)}_p)\embeddedinto H_p^{n(\frac1p-\frac1q)}(\Om) = F_{p,2}^{n(\frac1p-\frac1q)}(\Om)\embeddedinto F_{q,2}^0(\Om)=L^q(\Om)
\]
and analogously $D(A^{\frac12+\frac n2(\frac1p-\frac1q)})\embeddedinto W^{1,q}(\Om)$, so that an application of (i) yields (ii) and (iii), respectively. The same embedding results also readily ensure the validity of (iv).
%
%
% $\norm[L^q]{e^{-tA}\phi}\leq C \norm[H^{n(1/p-1/q)}_p=F^{n(1/p-1/q)}_{p,2}]{e^{-tA}\phi}$
% or, for (iii) $\norm[L^q]{\nabla e^{-tA}\phi}\leq \norm[W^{1,q}]{e^{-tA}\phi} \leq \norm[F^{1+n(1/p-1/q)}_{p,2}]{e^{-tA}\phi}$
%
% and an embedding result for domains of fractional powers of $A$ (\cite[Prop. 1.4]{giga_miyakawa})
%
% important preparations/proofs of the main ingredient to be found in \cite{giga_domainsoffractionalpowers_MR786549}
% $...\leq Cc\norm[D(A_p^{n/2(1/p-1/q)})]{e^{-tA}\phi}$
%
%  with estimates for $\norm{A^\gamma e^{-tA}}$ for $\gamma>0$
%
%  %for the gamma in question to be <1 we need $p\geq N/2$. But why should we want to have this?
%
%  and generators $A$ of analytic semigroups, see \cite[Def. 1.4.7 and Theorem 1.4.3]{henry}.\\
%
% $... \leq K \norm[L^p]{A^{n/2(1/p-1/q)}e^{-tA}\phi}$ %by the definition of \norm[D(A^\alpha)]{}
% $...\leq k t^{-\frac n2(\frac1p-\frac1q)}\norm[L^p]{\phi}$ %L^p should be L^p_\sigma, but the norms coincide.
\end{proof}

The following lemma, giving elementary estimates for integrals that arise in calculations involving semigroup representations of solutions, will find frequent use in the proof of Proposition \ref{prop:Szero}.
\begin{lem}\label{lem:integralestimates}
For all $\eta>0$ there is $C=C(\eta)>0$ such that for all $\al\in [0,1-\eta]$, $\beta\in[\eta,1-\eta]$, $\gamma,\delta\in \R$ satisfying $\frac1\eta\geq\gamma-\delta\geq\eta$ and for all $t>0$, we have
\[
 \int_0^t \left(1+s^{-\al}\right)\left(1+(t-s)^{-\beta}\right) e^{-\gamma s} e^{- \delta(t-s)} ds \leq C(\eta) e^{-\min\{\gamma,\delta\} t} \left(1+t^{\min\set{0, 1-\alpha-\beta}}\right).
\]
\end{lem}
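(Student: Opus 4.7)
The plan is to reduce the statement, by factoring out the slower exponential decay, to a purely algebraic time-integral estimate, and then to split the short-time and long-time regimes. Concretely, since $\gamma-\delta\geq\eta>0$ implies $\min\{\gamma,\delta\}=\delta$, we can write $e^{-\gamma s}e^{-\delta(t-s)}=e^{-\delta t}\,e^{-(\gamma-\delta)s}$ and estimate $e^{-(\gamma-\delta)s}\leq e^{-\eta s}$. It therefore suffices to show that
\[
J(t):=\int_0^t \bigl(1+s^{-\alpha}\bigr)\bigl(1+(t-s)^{-\beta}\bigr)e^{-\eta s}\,ds \;\leq\; C(\eta)\bigl(1+t^{\min\{0,\,1-\alpha-\beta\}}\bigr),
\]
uniformly in $\alpha\in[0,1-\eta]$, $\beta\in[\eta,1-\eta]$, which is then multiplied by $e^{-\delta t}$.

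For the short-time regime $t\leq 1$, I would simply drop the exponential factor and expand the product into four terms. The three mixed terms give $t$, $t^{1-\alpha}/(1-\alpha)$, $t^{1-\beta}/(1-\beta)$, each bounded by a $(1-\eta)$-dependent multiple of $t^{1-\alpha-\beta}$ because $t\leq 1$ and $\alpha,\beta\in[0,1-\eta]$. The genuinely delicate term is the convolution $\int_0^t s^{-\alpha}(t-s)^{-\beta}\,ds=B(1-\alpha,1-\beta)\,t^{1-\alpha-\beta}$; the Beta function is uniformly bounded for $\alpha,\beta\leq 1-\eta$, so we obtain $J(t)\leq C(\eta)\,t^{1-\alpha-\beta}$. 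Since $t^{1-\alpha-\beta}\leq 1$ whenever $1-\alpha-\beta\geq 0$, this yields the required bound $J(t)\leq C(\eta)(1+t^{\min\{0,1-\alpha-\beta\}})$ on $(0,1]$.

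For the long-time regime $t>1$ I would split the domain at $s=t/2$. On $(0,t/2)$, we have $t-s\geq t/2\geq 1/2$, so $1+(t-s)^{-\beta}\leq 1+2^\beta$, and we estimate
\[
\int_0^{t/2}\bigl(1+s^{-\alpha}\bigr)e^{-\eta s}\,ds\;\leq\;\int_0^{\infty}\bigl(1+s^{-\alpha}\bigr)e^{-\eta s}\,ds,
\]
which is finite and uniformly bounded in $\alpha\in[0,1-\eta]$ by a constant $C(\eta)$. On $(t/2,t)$, I would use $s^{-\alpha}\leq (t/2)^{-\alpha}\leq 2^\alpha$, pull out $e^{-\eta s}\leq e^{-\eta t/2}$, and compute $\int_{t/2}^t(1+(t-s)^{-\beta})\,ds\leq t/2+(t/2)^{1-\beta}/(1-\beta)\leq C(\eta)\,t$; the remaining $t\,e^{-\eta t/2}$ is bounded uniformly by $C(\eta)$. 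Combining, $J(t)\leq C(\eta)$ for $t>1$, which is trivially dominated by $C(\eta)(1+t^{\min\{0,1-\alpha-\beta\}})$.

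There is no real obstacle beyond careful bookkeeping; the only point that requires some attention is verifying that all constants depend only on $\eta$. This is ensured by the three quantitative hypotheses: $\alpha,\beta\leq 1-\eta$ keeps the Beta function and the $(1+s^{-\alpha})e^{-\eta s}$ tail integral uniformly bounded; $\beta\geq\eta$ controls $1/(1-\beta)$; and $\gamma-\delta\geq\eta$ provides the uniform exponential decay used to replace $e^{-(\gamma-\delta)s}$ by $e^{-\eta s}$. The upper bound $\gamma-\delta\leq 1/\eta$ is not needed for the preceding steps, but if one prefers to retain the exact exponential $e^{-\min\{\gamma,\delta\}t}$ (rather than a marginally slower one) this upper bound keeps the split between $e^{-\delta t}$ and $e^{-(\gamma-\delta)s}$ in balance.
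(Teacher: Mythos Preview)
Your proof is correct and takes a cleaner route than the paper's. Both arguments begin by factoring out $e^{-\delta t}$ (using $\gamma>\delta$), but then diverge. The paper expands the product into four integrals and estimates each over all $t>0$ at once; the delicate one is the pure convolution $\int_0^t s^{-\alpha}(t-s)^{-\beta}e^{-(\gamma-\delta)s}\,ds$, which it rescales to $t^{1-\alpha-\beta}\int_0^1\sigma^{-\alpha}(1-\sigma)^{-\beta}e^{-(\gamma-\delta)\sigma t}\,d\sigma$ and, in the regime $t\geq 1$ with $1-\alpha-\beta>0$, splits at the $t$-dependent point $\tfrac12 t^{-(1-\alpha-\beta)/(1-\alpha)}$. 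Your approach of immediately replacing $e^{-(\gamma-\delta)s}$ by $e^{-\eta s}$ and then separating $t\leq 1$ (Beta-function bound) from $t>1$ (split at $t/2$) is more transparent and, as you note, does not require the hypothesis $\gamma-\delta\leq 1/\eta$; the paper actually uses that bound when a factor $e^{\gamma-\delta}$ appears in its estimate of $\int_0^t(t-s)^{-\beta}e^{-(\gamma-\delta)s}\,ds$. One small slip in your bookkeeping remark: it is $\beta\leq 1-\eta$ (not $\beta\geq\eta$) that controls $1/(1-\beta)$; in fact your argument never invokes the lower bound $\beta\geq\eta$, whereas the paper's does (through $\beta/(1-\alpha)\geq\beta\geq\eta$ in its final step).
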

\begin{proof}
Since the statement is a minimally sharpened version of \cite[Lemma 1.2]{W4}, it is not surprisig that its proof can be performed along the same lines as in \cite[Lemma 1.2]{W4}. We include a proof in the appendix.
\end{proof}
\begin{remark}
The roles of $\delta$ and $\gamma$ can of course be exchanged if those of $\alpha$ and $\beta$ are. % exchanged as well.
The constant $C(\eta)$ becomes unbounded as $\eta\to 0^+$.
\end{remark}

In cases where the previous lemma yields another than the desired exponent, the following elementary fact may be of use:

\begin{lem}
 \label{lem:improveintestimate}
 Let $0\ge a\ge b$ and $t>0$. Then $(1+t^a)\leq 2 (1+t^b)$.
\end{lem}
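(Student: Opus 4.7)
The plan is a simple case distinction on whether $t\ge 1$ or $t<1$, exploiting that both exponents are nonpositive.

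First, I would observe that because $a\le 0$, the function $t\mapsto t^a$ is bounded above by $1$ whenever $t\ge 1$; the same holds for $t^b$. Hence in the regime $t\ge 1$ we immediately get $1+t^a\le 2\le 2(1+t^b)$, which is the claim.

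Second, for $t\in(0,1)$ I would use the monotonicity of $t\mapsto t^s$ in $s$ for fixed $t<1$: since $a\ge b$ and $t<1$, we have $t^{a-b}\le 1$ (because $a-b\ge 0$), so $t^a=t^{a-b}t^b\le t^b$. Therefore $1+t^a\le 1+t^b\le 2(1+t^b)$.

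Since the two cases cover all $t>0$, the inequality holds in general. There is essentially no obstacle here; the only point worth being careful about is that the hypothesis $a\ge b$ (not the reverse) is exactly what makes $t^a\le t^b$ hold on $(0,1)$ — one must not confuse the direction of monotonicity in $s\mapsto t^s$ when $t<1$.
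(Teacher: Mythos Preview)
Your proof is correct and follows essentially the same case distinction as the paper's own argument: for $t\ge 1$ use $t^a\le 1$, and for $t<1$ use $t^{a-b}\le 1$ to get $t^a\le t^b$. There is nothing to add.
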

\begin{proof}
 If $t>1$, then $1+t^a\le 2\le 2+2t^b$. If $t\le 1$, by the nonnegativity of $a-b$ the inequality $t^{a-b}\le 1^{a-b}$ holds and hence $1+t^a\le 1+t^b=1+t^b t^{a-b}<2(1+t^b)$.
\end{proof}

Another similarly elementary observation is the following:
\begin{lem}\label{lem:product}
 Let either $a, b\geq 0$ or $a, b\leq 0$. Then for any $t>0$, the inequality $(1+t^a)(1+t^b)\leq 3(1+t^{a+b})$ holds.
\end{lem}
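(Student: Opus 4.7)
The plan is to expand $(1+t^a)(1+t^b) = 1 + t^a + t^b + t^{a+b}$, so the claim reduces to
\[
 t^a + t^b \le 2 + 2t^{a+b}.
\]
In fact it will be enough to establish $t^a + t^b \le 2\max\{1,t^{a+b}\}$, which I would deduce from the even sharper pointwise bound $\max\{t^a,t^b\} \le \max\{1,t^{a+b}\}$ via a short case analysis splitting on the common sign of $a,b$ and on whether $t\ge 1$ or $t<1$.

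If $a,b\ge 0$ and $t\ge 1$, monotonicity of $s\mapsto t^s$ together with $a,b\le a+b$ yields $t^a,t^b\le t^{a+b}$; if instead $t<1$, the (now decreasing) monotonicity with $a,b\ge 0$ gives $t^a,t^b\le t^0=1$. The case $a,b\le 0$ is perfectly symmetric: for $t\ge 1$ one has $t^a,t^b\le 1$, while for $t<1$ the inequality $a,b\ge a+b$ again produces $t^a,t^b\le t^{a+b}$. In each of the four sub-cases the desired bound $\max\{t^a,t^b\}\le\max\{1,t^{a+b}\}\le 1+t^{a+b}$ is immediate, so that $t^a+t^b\le 2(1+t^{a+b})$ and hence $(1+t^a)(1+t^b)\le 3+3t^{a+b}=3(1+t^{a+b})$. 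There is no real obstacle here beyond tidy bookkeeping; the only content of the statement is that when $a$ and $b$ share a sign, one of the two extreme values $1$ or $t^{a+b}$ controls both $t^a$ and $t^b$ simultaneously, and this fails once the exponents have opposite signs (e.g.\ $a=-b\ne0$ with $t\to 0$ makes the left-hand side blow up while $1+t^{a+b}=2$).
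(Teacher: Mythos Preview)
Your proof is correct and follows essentially the same approach as the paper: expand the product, reduce to showing $t^a+t^b\le 2(1+t^{a+b})$, and establish $t^a,t^b\le 1+t^{a+b}$ by a case analysis on the common sign of $a,b$ and on whether $t\ge 1$ or $t<1$. The paper's proof is slightly more terse but structurally identical.
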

\begin{proof}
 If $a, b\geq 0$, for $t\geq 1$, we have $t^a\leq t^{a+b}\leq 1+t^{a+b}$, whereas for $t\leq 1$, $t^a\leq 1\leq 1+t^{a+b}$. The same estimates hold for $t^b$, and thus $(1+t^a)(1+t^b)= 1 + t^a+t^b+t^{a+b}\leq 3(1+t^{a+b})$. For $a, b<0$, one has to exchange the cases $t\geq 1$ and $t\leq 1$.
\end{proof}

As final preparatory step, we include the following result on local existence of solutions:

\begin{lem}\label{lem:locexistence}
 Let $N\in\set{2,3}$, $q>N$, $\beta\in(\frac N4,1)$ and $C_S>0$ and let $S$ be a function satisfying \eqref{eq:condS}. In addition assume that there exists a compact set $K\subset \Om$ such that
 \begin{equation}\label{eq:Szeroatbdry}
  S(x,n,c) = 0 \quad \mbox{ for any }n\geq 0, c\geq 0, x\in \Om\setminus K.
 \end{equation}
 Assume that $(n_0,c_0,u_0)$ satisfy \eqref{inidata}. \\
 (i) There exist
\begin{align*}
\tau=&\tau(q,\beta,\norm[L^\infty(\Om)]{n_0}, \norm[W^{1,q}(\Om)]{c_0},\norm[L^2(\Om)]{A^\beta u_0},C_S)>0 \qquad \mbox{and }\\
\Gamma=&\Gamma(q,\beta,\norm[L^\infty(\Om)]{n_0}, \norm[W^{1,q}(\Om)]{c_0},\norm[L^2(\Om)]{A^\beta u_0},C_S)>0
\end{align*}
(where for fixed $\beta$ and $q$ the value of ~$\Gamma$ is nondecreasing in the arguments $\norm[L^\infty(\Om)]{n_0}$, $\norm[W^{1,q}(\Om)]{c_0}$, $\norm[L^2(\Om)]{A^\beta u_0}$, $C_S$, and $\tau$ is nonincreasing with respect to them) and a classical solution $(n,c,u,P)$ of \eqref{0}, \eqref{eq:bdrycond} on $[0,\tau]$ with initial data $(n_0,c_0,u_0)$ which satisfies
 \[
  \norm[L^\infty(\Om)]{n(\cdot,t)}+\norm[W^{1,q}(\Om)]{c(\cdot,t)}+\norm[D(A^\beta)]{u(\cdot,t)}\leq  \Gamma
  \mbox{ for every } t\in [0,\tau].
 \]
 (ii) This solution can be extended to a maximal time interval, more precisely: There are $\Tmax>0$ and a classical solution $(n,c,u,P)$ of \eqref{0} in $\Om\times[0,\Tmax)$ such that
\begin{equation}\label{eq:blowupcrit}
 \mbox{if }\Tmax<\infty, \mbox{then } \norm[L^\infty(\Om)]{n(\cdot,t)}+\norm[W^{1,q}(\Om)]{c(\cdot,t)}
 +\norm[L^2(\Om)]{A^{\beta} u(\cdot,t)} \to \infty \mbox{ as } t\nearrow T_{max}.
\end{equation}
Moreover, we have $n>0$ and $c>0$ on $\Om\times(0,\Tmax)$.
For any $T\in(0,T_{max})$, this solution is unique among all functions satisfying \eqref{solspace}, up to addition of functions $\phat$, such that $\phat(\cdot,t)$ is constant for any $t\in(0,T)$ to $P$.
\end{lem}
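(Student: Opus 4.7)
My plan is to construct the local solution via a Banach fixed-point argument on the variation-of-constants formulation of \eqref{0}. For initial data as in \eqref{inidata} set
\[
 R := \norm[\Lin]{n_0} + \norm[W^{1,q}(\Om)]{c_0} + \norm[L^2(\Om)]{A^\beta u_0} + 1,
\]
and, for $T>0$ to be chosen, consider the closed ball $B_T$ of radius $2R$ in
\[
 Y_T := C^0([0,T];\Lin) \times C^0([0,T];W^{1,q}(\Om)) \times C^0([0,T];D(A^\beta))
\]
consisting of triples whose values at $t=0$ coincide with the prescribed initial data. To such a triple $(\ntilde,\widetilde c,\widetilde u)$ I associate $\Psi(\ntilde,\widetilde c,\widetilde u)=(n,c,u)$ defined through
\begin{align*}
 n(t) &= e^{t\Lap}n_0 - \int_0^t e^{(t-s)\Lap}\na\cd\bigl(\ntilde\, S(\cd,\ntilde,\widetilde c)\cd\na \widetilde c + \widetilde u\,\ntilde\bigr)\,ds,\\
 c(t) &= e^{t\Lap}c_0 - \int_0^t e^{(t-s)\Lap}\bigl(\ntilde\,\widetilde c + \widetilde u\cd\na \widetilde c\bigr)\,ds,\\
 u(t) &= e^{-tA}u_0 - \int_0^t e^{-(t-s)A}\calP\bigl((\widetilde u\cd\na)\widetilde u - \ntilde\,\na\Phi\bigr)\,ds,
\end{align*}
with the pressure $P$ recovered a posteriori from the third equation of \eqref{0}.

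The heart of the proof is to show that $\Psi$ maps $B_T$ into itself and is a contraction for $T$ small enough. For the $n$-component one applies Lemma \ref{lem:heat}(iv) with $p=\infty$ and $q=q_0>N$ to the divergence-form flux $\ntilde\,S\cd\na\widetilde c+\widetilde u\,\ntilde$; this is precisely where the main technical obstacle lies and where the hypothesis \eqref{eq:Szeroatbdry} enters decisively: because $S$ vanishes outside the compact set $K\subset\Om$ and $\widetilde u|_{\pa\Om}=0$, the flux vanishes near $\pa\Om$ and so the semigroup bound applies without boundary defect, yielding a contribution bounded by $CR^2 T^{1/2-N/(2q_0)}$ (here I use $|S|\leq C_S$ together with the embedding $D(A^\beta)\embeddedinto\Lin$, valid since $\beta>N/4$). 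Analogous applications of Lemma \ref{lem:heat}(iii) and of Lemma \ref{lem:stokes}(i)--(iv), using $D(A^\beta)\embeddedinto W^{1,2}(\Om)$ to control $(\widetilde u\cd\na)\widetilde u$ in $L^2$, give bounds of the form $CR^2 T^{\sigma}$ with some $\sigma>0$ for the other two components. Choosing $T:=\tau$ accordingly --- small, and depending monotonically on $R$ and $C_S$ --- makes $\Psi$ a self-map of $B_T$; exploiting the local Lipschitz regularity of $S$ ensured by its $C^2$ smoothness, the same estimates applied to differences yield contractivity. Banach's theorem produces a fixed point, and standard parabolic Schauder theory upgrades it to a classical solution with the regularity stated in \eqref{solspace}.

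For the extension to a maximal interval I iterate the construction from data specified at times $t_k\nearrow T_{\max}$; since $\tau$ depends monotonically on the norms featured in \eqref{eq:blowupcrit}, the blow-up alternative is immediate. Positivity of $c$ on $\Om\times(0,T_{\max})$ follows from the parabolic maximum principle applied to its linear equation with the bounded coefficient $-n$; positivity of $n$ then follows from the strong maximum principle and Hopf's lemma for the first equation, in combination with the mass conservation $\io n(\cd,t)=\io n_0>0$ implied by the no-flux boundary condition. Uniqueness within \eqref{solspace} up to the addition of spatially constant functions $\phat(\cd,t)$ to $P$ is obtained from a standard Gronwall estimate on the $Y_T$-norm of the difference of two putative solutions; two compatible pressures then have identical spatial gradients by the third equation and therefore can only differ by functions of $t$ alone.
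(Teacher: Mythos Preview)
Your proposal is correct and follows essentially the same route as the paper: the paper simply observes that \eqref{eq:Szeroatbdry} reduces the boundary condition to the homogeneous Neumann one and then refers to \cite[Lemma 2.1]{Wk_CTNS_global_largedata}, where exactly this Banach fixed-point argument on the mild formulation is carried out, with $S$ controlled by $C_S$ wherever necessary. Your sketch in fact supplies more detail than the paper itself; the only imprecision is the phrase ``the flux vanishes near $\pa\Om$'' --- the transport part $\widetilde u\,\ntilde$ need not vanish \emph{near} the boundary, only its normal trace does, but this is all that is needed for the semigroup estimate and the Neumann boundary condition to apply.
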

\begin{proof}
 Condition \eqref{eq:Szeroatbdry} removes any nonlinearity or inhomogeneity from the boundary condition \eqref{eq:bdrycond}. Thus, a
 proof for a very similar system can be found in \cite[Lemma 2.1, p. 324-328]{Wk_CTNS_global_largedata}, where this is shown by means of a Banach fixed-point argument. Differences mainly stem from the presence of $S$, which can be estimated in the Frobenius norm by $C_S$ whenever necessary, so that the reasoning there can almost word by word be applied to the current setting.
\end{proof}

\section{Constants and parameters}\label{sec:constandparam}
Given $m$, $N$, $p_0$, $q_0$, $\beta$, $\al_1$ and $\al_2$ as in Theorem \ref{thm:main}, in this section we shall, mainly by application of Lemma  \ref{lem:integralestimates}, produce constants $C_1,\ldots, C_8$
(which, accordingly, will only depend on $m>0$, $N$, $p_0$, $q_0$, $\beta$ and $\alpha_1$, $\alpha_2$) to be used in the continuation argument in the proof of Proposition \ref{prop:Szero}.
We let $k_1,\ldots, k_9$ denote the constants appearing in the estimates of Lemma \ref{lem:heat}, Lemma \ref{lem:helmholtzbounded} and Lemma \ref{lem:stokes}.
As stated before, $\lambda'_1$ and $\lambda_1$ will be used to refer to the smallest positive eigenvalues of the Stokes operator or the Neumann Laplacian in $\Om$.
As in Proposition \ref{prop:Szero} (or Theorem \ref{thm:main}), we will rely on
\begin{align}
m>0,\\
\label{eq:dim} N\in\set{2,3},\\
\label{eq:p_0} \frac N2<p_0<N,\\
\label{eq:p0q0} q_0>N \mbox{ and } \frac1{q_0}>\frac1{p_0}-\frac1N,\\
\label{eq:choosebeta} \frac N4<\beta<1,\\
\label{eq:alpha1} \alpha_1\in (0,\min\set{m,\lambda_1}),\\
\label{eq:alpha2} \alpha_2\in(0. \min\set{\alpha_1,\lambda_1'})
\end{align}
being satisfied, where we have included upper bounds on $p_0$ and $q_0$ in \eqref{eq:p_0} and \eqref{eq:p0q0} that will be used during Section \ref{sec:Thm}.
We pick $\mu\in(\alpha_2,\lambda_1')$ %We might add: such that $2\alpha_2\neq \mu$, but this is not necessary: e^{-2\al_2 s } e^{-\mu (t-s)}\leq e^{-\al_2 s} e^{-\mu (t-s)} (and then Lemma \ref{lem:integralestimates} as usual.
and will henceforth apply Lemma \ref{lem:stokes} with this value of $\mu$ only.

We first note some elementary consequences of these choices that are nevertheless important as they make it possible to use Lemma \ref{lem:integralestimates}.
Because $\alpha_2<\min\set{\alpha_1,\mu}$ and
\(-\frac N2(\frac1{p_0}-\frac1{q_0})\in(-\frac12,0)
\)
due to \eqref{eq:p0q0}, Lemma \ref{lem:integralestimates} ensures the existence of $C_1>0$ such that for all $t>0$
\begin{equation}\label{eq:C1}
 \intnt (1+s^{-\frac{N}2(\frac1{p_0}-\frac1{q_0})}) e^{-\mu(t-s)} e^{-\alpha_1 s} ds\leq C_1e^{-\al_2 t}.
\end{equation}
Since
 \(
  -\frac12\in(-1,0)\), \(-1+\frac{N}{2q_0}\in(-1,0)
 \)
 and
 \(
  1-\frac12-1+\frac{N}{2q_0}=-\frac12+\frac{N}{2q_0}<0,
 \)
Lemma \ref{lem:integralestimates} also provides us with $C_2>0$ such that
\begin{equation}\label{eq:C2}
\intnt (t-s)^{-\frac12} (1+s^{-1+\frac{N}{2q_0}})e^{-\mu(t-s)} e^{-\al_2s} ds\leq C_2(1+t^{-\frac12+\frac N{2q_0}})e^{-\al_2 t} \quad \mbox{for all }t>0.
\end{equation}

Because %$-\frac 12\in(-1,0)$ and
\(-\frac N2(\frac1{p_0}-\frac1{q_0})\in(-\frac12,0)
\) by \eqref{eq:p0q0} and $1-\frac{1}{2}-\frac{N}2(\frac1{p_0}-\frac1{q_0})>0>-\frac12$, Lemma \ref{lem:integralestimates} in combination with Lemma \ref{lem:improveintestimate} yields $C_3>0$ satisfying

\begin{equation}\label{eq:C3}
 \intnt(t-s)^{-\frac12}(1+s^{-\frac{N}2(\frac1{p_0}-\frac1{q_0})}) e^{-\mu(t-s)} e^{-\alpha_1 s} ds \leq C_3 (1+t^{-\frac{1}{2}})e^{-\alpha_2 t} \qquad\mbox{for all } t>0.
\end{equation}

 As
 $-\frac12-\frac{N}{2q_0}\in(-1,0)$ due to the choice of $q_0$, $-1+\frac N{2q_0}\in(-1,0)$ and %$1-\frac12+\frac{N}{2q_0}-1+\frac N{2q_0}=-\frac12+\frac{N}{q_0}>-\frac12$,
 $1-\frac12-\frac{N}{2q_0}-1+\frac N{2q_0}=-\frac12$,
 Lemmata \ref{lem:integralestimates} and \ref{lem:improveintestimate} make it possible to find $C_4>0$ such that for all $t>0$
\begin{equation}\label{eq:C4}
\intnt e^{-\mu (t-s) } (t-s)^{-\frac12-\frac N{2q_0}}(1+s^{-1+\frac N{2q_0}})e^{-2\al_2 s} ds \leq C_4(1+t^{-\frac12})e^{-\al_2 t}.
\end{equation}
%because if $-\frac12+\frac{N}{q_0}\geq 0$, the integral is bounded, and if $-\frac12+\frac N{q_0}<0$, there is a constant $c$ such that $(1+t^{-\frac12+\frac{N}{q_0}})\leq c(1+t^{-\frac12})$ for all $t>0$.

Since %$-\frac12 \in (-1,0)$ and
$-\frac{N}{2p_0}\in(-1,0)$
and
\(1-\frac12-\frac N{2p_0}\ge-\frac12\),
%for any $q\ge q_0$,
Lemma \ref{lem:integralestimates} ensures
%Lemmata \ref{lem:integralestimates} and \ref{lem:improveintestimate} ensure
 the existence of $C_5>0$ such that for any $q\ge q_0$ and any $t>0$ %(which then satisfies $\lambda_1-\al>\frac{\lambda_1}2>0$)
we have
\begin{equation}\label{eq:C5}
 \int_0^t (1+(t-s)^{-\frac{1}{2}})e^{-\lambda_1(t-s)}
 (1+s^{-\frac{N}{2p_0} })
 e^{-\al_1 s}ds \le C_5 (1+
 {t^{-\frac{1}{2}}})e^{-\al_1 t}.
\end{equation}

Moreover,
\( -\frac{1}{2}-\frac{N}{2q_0}\in(-1,0)\) %and
%{\(-1+\frac{N}{2q_0} \in(-1,0),\)}
since $q_0>N$, and
\(1-\frac12-\frac N{2q_0}-1+\frac N{2q_0}=-\frac12.\)
Hence it is possible to find $C_6>0$ such that for all $t>0$,% all $q\ge q_0$,
\begin{align}\label{eq:C6}
\int_0^t(1+(t-s)^{-\frac{1}{2}-\frac{N}{2q_0}})e^{-\lambda_1(t-s)} (1+
s^{-1+\frac{N}{2q_0}}
)e^{-\al_1 s}ds\le C_6(1+t^{-\frac12})e^{-\al_1 t}.
\end{align}

Finally, for $\theta\geq q_0$,
\(-\frac{1}{2}-\frac{N}{2}(\frac{1}{q_0}-\frac{1}{\theta})
 \in (-\frac12-\frac N{2q_0},-\frac12)\subset(-1,0);
\)
by \eqref{eq:p0q0} also
\(-\frac{1}{2}-\frac{N}{2}(\frac1{p_0}-\frac1{q_0})
 \in(-1,0),\)
and \(1-\frac12-\frac N2(\frac1{q_0}-\frac1\theta)-\frac12 -\frac N2 (\frac1{p_0}-\frac1{q_0}) = -\frac{N}{2}(\frac{1}{p_0}-\frac{1}{\theta})\).
%and
%\[
% 0 > -\frac N2(\frac1{p_0}-\frac1{q_0})\geq -\frac{N}{2}(\frac{1}{p_0}-\frac{1}{\theta}) >-\frac{N}{2p_0}>-1.
%\]
Thus Lemma \ref{lem:integralestimates} provides $C_7>0$ such that for  any $\theta\ge q_0$
\begin{align}\label{eq:C7}
 \int_0^t(1+(t-s)^{-\frac{1}{2}-\frac{N}{2}(\frac{1}{q_0}-\frac{1}{\theta})})e^{-\lambda_1(t-s)}
(1+s^{-\frac{1}{2}-\frac{N}{2}(\frac1{p_0}-\frac1{q_0})})e^{-\al_1 s}ds
\le C_7(1+t^{-\frac{N}{2}(\frac{1}{p_0}-\frac{1}{\theta})})e^{-\al_1 t}
\end{align}
for all $t>0$.

Let
\begin{equation}\label{eq:defsigma}
\sigma:=\int_0^\infty (1+s^{-\frac N{2p_0}}) e^{-\alpha_1 s} ds
\end{equation}
and observe that, by the condition \eqref{eq:p_0} on $p_0$, this is finite.

\begin{lem}\label{lem:chooseM}
Given $m$, $N$, $p_0$, $q_0$, $\beta$, $\al_1$ and $\al_2$ as in Theorem \ref{thm:main}, it is possible to
 choose $M_1, M_2, M_3, M_4>0$ and $\eps>0$ such that
\begin{align}
 &k_7(N,q_0)+k_5(q_0)k_7(q_0,q_0) (M_1+{k_1}) C_1\norm[\Lin]{\na\Phi}+3k_7({\scriptstyle \frac{N}{1+\frac{N}{q_0}}},q_0) k_5({\scriptstyle\frac{N}{1+\frac{N}{q_0}}}) M_3M_4 C_2\eps \leq \frac{M_3}2, \label{eq:M3}\\
 \label{eq:M4}
&k_8(N,N)+k_8(N,N)k_5(N)|\Om|^{\frac{q_0-N}{Nq_0}}(M_1+
{k_1}) C_3\norm[\Lin]{\nabla\Phi} \nn\\
&\quad\quad\quad\quad\quad\quad\quad\quad\quad\quad\quad+
3 k_8({\scriptstyle \frac{1}{\frac1{q_0}+\frac1N}},N) k_5({\scriptstyle \frac1{\frac1{q_0}+\frac1N}}) C_4 M_3M_4\eps \leq \frac{M_4}2,\\
\label{eq:M2}
& {
k_2+C_5k_2 (m+(M_1+{k_1})\eps) e^{(M_1+
{k_1})\sigma\eps}+3k_2M_2M_3 C_6\eps \le \frac{M_2}2},\\
\label{eq:M1}
&3C_SC_7k_4M_2m{|\Om|^{\frac{1}{q_0}}}+3C_SC_7k_4M_2(M_1+
{k_1})\eps+3(M_1+{k_1})C_7k_4M_3\eps\le \frac{M_1}2
\end{align}
hold.
\end{lem}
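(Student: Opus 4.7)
The plan is to choose the constants sequentially in the order $M_2 \to M_1 \to M_3 \to M_4 \to \eps$. The key observation is that if one sets $\eps=0$, the four inequalities \eqref{eq:M3}--\eqref{eq:M1} decouple into a triangular system: \eqref{eq:M2} at $\eps=0$ constrains only $M_2$, then \eqref{eq:M1} at $\eps=0$ constrains only $M_1$ (given $M_2$), and finally \eqref{eq:M3}, \eqref{eq:M4} at $\eps=0$ constrain $M_3$ and $M_4$ (given $M_1$). We therefore fix each $M_i$ so that its $\eps$-independent contribution amounts to at most $M_i/4$, and then pick $\eps$ small enough for the remaining $\eps$-dependent terms to fit into the residual slack of $M_i/4$.

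Concretely, one first takes $M_2 := 4 k_2(1 + C_5 m)$, so that the $\eps$-free part of the left-hand side of \eqref{eq:M2} (namely $k_2 + C_5 k_2 m$) equals $M_2/4$. With $M_2$ frozen, the only $\eps$-independent term on the left of \eqref{eq:M1} is $3 C_S C_7 k_4 M_2 m |\Om|^{1/q_0}$; setting $M_1$ equal to four times this quantity makes that term $M_1/4$. Next, having determined $M_1$, define $M_3 := 4\bigl(k_7(N,q_0) + k_5(q_0)k_7(q_0,q_0)(M_1+k_1)C_1\norm[\Lin]{\na\Phi}\bigr)$ so that the $\eps$-free part of \eqref{eq:M3} equals $M_3/4$, and analogously define $M_4$ from the $\eps$-free part of \eqref{eq:M4}. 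All four constants are now fixed, depending only on the quantities allowed in the statement.

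It remains to select $\eps$. All $\eps$-dependent contributions in \eqref{eq:M3}, \eqref{eq:M4} and \eqref{eq:M1} are already linear in $\eps$ with coefficients that are finite combinations of the (now frozen) $M_i$ and the constants $k_j, C_j$; so they can each be made at most $M_i/4$ by smallness of $\eps$. The only subtle point is the exponential factor $e^{(M_1+k_1)\sigma\eps}$ in \eqref{eq:M2}. Because $\sigma < \infty$ by \eqref{eq:defsigma} (which uses $p_0 > N/2$) and $M_1$ is already determined, one can impose $\eps$ so small that $e^{(M_1+k_1)\sigma\eps} \le 2$; after this, the second summand in \eqref{eq:M2} is bounded by $2 C_5 k_2 m + 2 C_5 k_2 (M_1+k_1)\eps$, where the first piece contributes at most $M_2/2$ only once we also demand $2 C_5 k_2 m \le M_2/4 + (\text{arbitrarily small})$. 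This forces a slight refinement: instead of $M_2 = 4 k_2(1 + C_5 m)$, we take $M_2 := 8 k_2(1 + C_5 m)$ so that the doubled exponential prefactor is accommodated. Taking $\eps$ as the minimum of the finitely many explicit thresholds produced above yields all four inequalities simultaneously. The only non-routine point is the bookkeeping around the exponential, which is the unique source of nonlinearity in $\eps$; once handled by a constant safety factor of $2$ as above, the argument reduces to verifying linear-in-$\eps$ smallness.
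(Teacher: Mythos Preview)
Your proof is correct and follows essentially the same strategy as the paper's: fix $M_2$ first (large enough to absorb the $\eps$-independent part of \eqref{eq:M2}), then $M_1$ from \eqref{eq:M1}, then $M_3,M_4$ from \eqref{eq:M3} and \eqref{eq:M4}, and finally take $\eps$ small. The only difference is cosmetic: the paper introduces an auxiliary constant $A>0$ at the outset, chooses $M_2$ so that $k_2+C_5k_2me^A<\frac{M_2}4$, and later imposes $\eps<\frac{A}{(M_1+k_1)\sigma}$ so that $e^{(M_1+k_1)\sigma\eps}<e^A$; your argument effectively takes $e^A=2$ and compensates by enlarging $M_2$. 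One small expository point: since you revise $M_2$ after having already defined $M_1,M_3,M_4$ in terms of it, it would be cleaner to state the final value $M_2=8k_2(1+C_5m)$ from the start and carry the subsequent definitions through with that value; the paper's device of fixing $A$ up front avoids this mid-proof backtracking.
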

\begin{proof}
First let $A>0$ and $M_2>0$ be such that
\begin{equation}
{
k_2+C_5 k_2 m e^{A}<\frac{M_2}4.
}
\end{equation}
Then we fix $M_1,M_3,M_4>0$ such that
\begin{equation}
\left\{
\begin{array}{llc}
&3C_SC_7k_4M_2m{|\Om|^{\frac{1}{q_0}}}<\frac{M_1}{4},\\
&k_7(N,q_0)+k_5(q_0)k_7(q_0,q_0) (M_1+{k_1})C_1\norm[\Lin]{\na\Phi}<\frac{M_3}{4},\\
&k_8(N,N)+k_8(N,N)k_5(N)|\Om|^{\frac{q_0-N}{Nq_0}}(M_1+
{k_1}) C_3\norm[\Lin]{\nabla\Phi} <\frac{M_4}{4}.
\end{array}
\right.
\end{equation}
Finally, letting $\ep>0$ small enough satisfying
\begin{align*} \ep <&\min\bigg\{\frac{A}{(M_1+
{k_1})\sigma},\frac{1}{12k_7({\scriptstyle \frac{N}{1+\frac{N}{q_0}}},q_0) k_5({\scriptstyle\frac{N}{1+\frac{N}{q_0}}})M_4 C_2},\frac{1}{12 k_8({\scriptstyle \frac{1}{\frac1{q_0}+\frac1N}},N) k_5({\scriptstyle \frac1{\frac1{q_0}+\frac1N}})M_3 C_4},\\
&\frac{M_2}{
{
4C_5 k_2(M_1+
{k_1}) e^A+12k_2M_2M_3C_6
}}
,\frac{M_1}{12C_7k_4 (M_1+
{k_1}) (C_SM_2+M_3)}\bigg\},
\end{align*}
we can easily check that
\eqref{eq:M1}, \eqref{eq:M2}, \eqref{eq:M3} and \eqref{eq:M4} are true.
\end{proof}

\section{Proof of a special case: Sensitivities vanishing near the boundary}\label{sec:Thm}
This section contains the core of the proof of Theorem \ref{thm:main}, concerning global existence and the convergence estimates both. Nevertheless, for the moment we will restrict ourselves to the situation that the sensitivity function $S$ vanishes close to the boundary.
That has the considerable advantage that the nonlinear boundary conditions posed in \eqref{eq:bdrycond} reduce to classical homogeneous Neumann boundary conditions and the existence theorem (Lemma \ref{lem:locexistence}) and standard results
concerning the heat semigroup (cf. Section \ref{sec:Prelim}) become applicable. The case of more general $S$ will be dealt with in Section \ref{sec:generalS}.

Let us first state what we are going to prove. The main difference between this proposition and Theorem \ref{thm:main} lies in the additional condition on $S$.

\begin{proposition}\label{prop:Szero}
Let $N\in\set{2,3}$, {$p_0\in(\frac N2,N)$}, $q_0\in(N,(\frac1{p_0}-\frac1N)^{-1})$, $q_1\ge q_0$, and $\beta\in(\frac N4,1)$. Let $C_S>0$, $\Phi\in C^{1+\delta}(\Ombar)$ with some $\delta>0$, $m>0$. Then for any $\alpha_1\in(0,\min\{m,\lambda_1\})$ and $\alpha_2\in(0,\min\set{\alpha_1,\lambda_1'})$ there are $\eps>0$, $M_1, M_2, M_3, M_4>0$ as in Lemma \ref{lem:chooseM} and $C_8, C_9, C_{10}, C_{11}>0$ such that the following holds:
  For any initial data $(n_0,c_0,u_0)$ fulfilling \eqref{inidata} as well as $c_0\in W^{1,q_1}(\Om)$ and
  \begin{equation} \label{eq:propinitcond}
   \nbar_0=\frac1{|\Om|}\io n_0=m,\quad \norm[L^{p_0}(\Om)]{n_0-\nbar_0}\le \eps,\quad { \norm[\Lin]{c_0}\le\eps
   },\quad \norm[L^N(\Om)]{u_0}\le \eps,
  \end{equation}
  and any function $S$ satisfying \eqref{eq:condS} and
 \[
  S(x,n,c) = 0 \quad \mbox{ for any }n\geq 0, c\geq 0, x\in \Om\setminus K
 \]
 for some compact set $K\subset \Om$, system  \eqref{0} with boundary condition \eqref{eq:bdrycond} and initial data $(n_0,c_0,u_0)$
   has a global classical solution, which, for any $t>0$, moreover satisfies
\begin{align}\label{eq:estimateslikeindefT}
\|n(\cdot,t)- e^{t\Delta} n_0\|_{L^\theta(\Om)}<&M_1 \eps\left(1+t^{-\frac{N}{2}\left(\frac1{p_0}-\frac1\theta\right)}\right)e^{-\al_1 t} \;\;\forall\;\theta\in[q_0,\infty],\nn\\
\|\nabla c(\cdot,t)\|_{\Lin}\le& M_2 \eps\left(1+t^{-\frac{1}{2}}\right)e^{-\al_1 t},\nn\\
\norm[L^{q_0}(\Om)]{u(\cdot,t)}\leq& M_3 \eps \left(1+t^{-\frac12+\frac{N}{2q_0}}\right)e^{-\al_2 t},\nn\\
\norm[L^{N}(\Om)]{\nabla u(\cdot,t)}\leq& M_4 \eps \left(1+t^{-\frac12}\right)e^{-\al_2 t},
\end{align}
   and
   \begin{align*}
     \norm[L^2(\Om)]{A^\beta u(\cdot,t)}\leq
     { C_8 }e^{-\al_2 t}, \qquad \norm[\Lin]{u(\cdot,t)}\leq
     { C_{9} } e^{-\al_2 t},\\
     \norm[L^\infty(\Om)]{n(\cdot,t)-\nbar_0} \leq  { C_{10} } e^{-\al_1 t}, \qquad \norm[W^{1,q_1}(\Om)]{c(\cdot,t)}\leq
     { C_{11} }e^{-\al_1 t}.
   \end{align*}
\end{proposition}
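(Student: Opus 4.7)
The plan is to run a continuation argument on the local solution produced by Lemma~\ref{lem:locexistence}. I would define
$$T := \sup\bigl\{T'\in(0,T_{max}) : \text{the four bounds in \eqref{eq:estimateslikeindefT} hold on }(0,T')\bigr\},$$
note that $T>0$ thanks to the singular time factors on the right-hand sides and continuity of the norms of $(n,c,u)$ at $t=0$, and show $T=T_{max}$ by contradiction. Assuming $T<T_{max}$, my goal would be to derive each of the four bounds on $(0,T]$ but with $M_i/2$ in place of $M_i$; continuity then contradicts the supremum-definition of $T$. Conditions \eqref{eq:M1}--\eqref{eq:M4} from Lemma~\ref{lem:chooseM} are engineered precisely so that each such improvement closes. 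Once $T=T_{max}$ is established, the four bounds imply boundedness of $\|n\|_{L^\infty}$, $\|c\|_{W^{1,q_0}}$ and $\|A^\beta u\|_{L^2}$ on $[0,T_{max})$, so the blow-up criterion \eqref{eq:blowupcrit} forces $T_{max}=\infty$.

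For each of the four estimates I would write out the natural Duhamel formula and bound term by term using Lemmata~\ref{lem:heat}, \ref{lem:helmholtzbounded}, \ref{lem:stokes}, \ref{lem:integralestimates} together with the pre-digested integral estimates \eqref{eq:C1}--\eqref{eq:C7}. Two algebraic observations make everything line up. First, since $\nabla\Phi$ is a gradient, $\calP(\bar n_0\nabla\Phi)=0$, so in the Stokes equation one has $\calP(n\nabla\Phi)=\calP((n-\bar n_0)\nabla\Phi)$; Lemma~\ref{lem:heat}(i) combined with the third assumed bound gives $\|n-\bar n_0\|_{L^{q_0}}\le (M_1+k_1)\eps(1+t^{-\frac N2(\frac1{p_0}-\frac1{q_0})})e^{-\al_1 t}$, and after multiplication by $\|\nabla\Phi\|_\infty$ the resulting convolution is handled by \eqref{eq:C1} and \eqref{eq:C3}; the $(u\cdot\nabla)u$-contributions are bounded by H\"older with $r^{-1}=q_0^{-1}+N^{-1}$ together with \eqref{eq:C2}, \eqref{eq:C4}, closing the $u$- and $\nabla u$-bounds under \eqref{eq:M3}, \eqref{eq:M4}. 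Second, because $\nabla\cdot u=0$, I would rewrite $\nabla\cdot(un)=\nabla\cdot(u(n-\bar n_0))$ in the $n$-Duhamel, so that the $u$-contribution also carries the small factor $n-\bar n_0$ and benefits from the combined rate $e^{-(\al_1+\al_2)s}\le e^{-\al_1 s}$; this term and the $nS\nabla c$-term both fit into the shape handled by \eqref{eq:C7} via Lemma~\ref{lem:heat}(iv) and Lemma~\ref{lem:product}, closing the $\|n(\cdot,t)-e^{t\Delta}n_0\|_{L^\theta}$-bound under \eqref{eq:M1}.

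A separate and essential preliminary bootstrap is needed for $\|\nabla c\|_{L^\infty}$, since the $nc$-contribution in its Duhamel requires a prior exponential decay of $\|c\|_{L^\infty}$. This I would extract directly from $c_t-\Delta c+u\cdot\nabla c+nc=0$ by the maximum principle: the assumed $L^\infty$-bound on $n-e^{t\Delta}n_0$ (the $\theta=\infty$ case) combined with Lemma~\ref{lem:heat}(i) yields the pointwise lower bound $n(\cdot,s)\ge m-(M_1+k_1)\eps(1+s^{-\frac N{2p_0}})e^{-\al_1 s}$, whose time-integral from $0$ to $t$ is at least $mt-(M_1+k_1)\sigma\eps$ by the definition \eqref{eq:defsigma} of $\sigma$; hence $\|c(t)\|_{L^\infty}\le \eps\, e^{(M_1+k_1)\sigma\eps}\,e^{-mt}$, which is exactly the $e^A$-factor inside \eqref{eq:M2}. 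With this in hand, Lemma~\ref{lem:heat}(ii) applied to Duhamel for $\nabla c$, together with \eqref{eq:C5} (for $nc$) and \eqref{eq:C6} (for $u\cdot\nabla c$, bounded by $\|u\|_{L^{q_0}}\|\nabla c\|_{L^\infty}$ via Lemma~\ref{lem:product}), yields the desired improvement to $\tfrac{M_2}{2}\eps$. After $T_{max}=\infty$ is secured, the further estimates for $\|A^\beta u\|_{L^2}$, $\|u\|_{L^\infty}$, $\|n-\bar n_0\|_{L^\infty}$ and $\|c\|_{W^{1,q_1}}$ follow by feeding \eqref{eq:estimateslikeindefT} back into Lemmata~\ref{lem:heat} and \ref{lem:stokes} and using the embedding \eqref{eq:stokesdomainembeddingw} (which, since $\beta>N/4$, in particular furnishes $D(A^\beta)\hookrightarrow L^\infty$).

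The main obstacle is essentially bookkeeping: making sure that every H\"older pairing, every singular time factor, and every exponential rate matches the shape of one of the pre-digested templates \eqref{eq:C1}--\eqref{eq:C7}, while preserving the prescribed decay rates $\al_1$ and $\al_2$ on the correct components. The most delicate point is preventing the slower rate $\al_2$ of $u$ from contaminating the $n$- and $c$-components, and it is precisely the rewriting $\nabla\cdot(un)=\nabla\cdot(u(n-\bar n_0))$ together with the maximum-principle bound for $\|c\|_{L^\infty}$ that permits the $n$- and $c$-parts of the system to be closed at the sharper rate $\al_1$.
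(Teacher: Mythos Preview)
Your plan matches the paper's proof closely: the definition of $T$, the $M_i/2$-improvement via Duhamel and the templates \eqref{eq:C1}--\eqref{eq:C7}, the rewriting $\calP(n\nabla\Phi)=\calP((n-\bar n_0)\nabla\Phi)$ and $u\cdot\nabla n=\nabla\cdot(u(n-\bar n_0))$, and the preliminary $\|c\|_{L^\infty}$-decay feeding into the $\nabla c$-bound are all exactly what the paper does (Lemmata~\ref{lem:prelimestimaten}--\ref{lem:estimaten}).

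There is one point where you underestimate the work. Your sentence ``the four bounds imply boundedness of \ldots\ $\|A^\beta u\|_{L^2}$'' and the later ``feeding \eqref{eq:estimateslikeindefT} back into Lemmata~\ref{lem:heat} and \ref{lem:stokes}'' hides a genuine circularity: to place $(u\cdot\nabla)u$ in $L^2(\Omega)$ you need $\|u\|_{L^\infty}$, and the only route to $\|u\|_{L^\infty}$ from the four bounds is via $D(A^\beta)\hookrightarrow L^\infty$, i.e.\ via the very quantity you are trying to control. The paper (Lemma~\ref{lem:bounduDbeta}) breaks this loop by Gagliardo--Nirenberg interpolation
\[
\|u\|_{L^\infty}\le C\,\|A^\beta u\|_{L^2}^{\,b}\,\|u\|_{L^{q_0}}^{\,1-b}\qquad\text{with }b<1,
\]
so that setting $M(t):=e^{\alpha_2 t}\|A^\beta u(\cdot,t)\|_{L^2}$ one obtains an inequality of the form $M(t)\le c+c\,\bigl(\sup_{(0,t)}M\bigr)^{b}$, which is sublinear in $\sup M$ and hence self-closes. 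In addition, the singular factor $(t-s)^{-\beta}$ with $\beta$ possibly close to $1$ forces a separate treatment of short times $s\in(0,t_0)$, where the paper falls back on the uniform constant $\Gamma$ from the local existence Lemma~\ref{lem:locexistence}. Without this bootstrap you cannot invoke the blow-up criterion \eqref{eq:blowupcrit}, since none of your four quantities dominates $\|A^\beta u\|_{L^2}$ directly. Once you insert this argument (on $(0,T)$, before or after establishing $T=T_{\max}$---either order works), the rest of your outline goes through.
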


Lemma \ref{lem:locexistence} asserts that there is a solution to \eqref{0}, which is defined on some interval $[0,\Tmax)$. We will denote this solution by $(n,c,u,P)$ in the following. Our main goal is to prove that $\Tmax=\infty$.
In order to show this and to achieve estimates \eqref{eq:estimateslikeindefT}, we define a number $T>0$ as follows:

\begin{dnt}\label{def:T}
 With $\eps>0$, $M_1, M_2, M_3, M_4>0$, $p_0$, $q_0$, $\alpha_1$ and $\alpha_2$ as in Proposition \ref{prop:Szero}, we let
\begin{equation}\label{eq:T}
T:=\sup\left\{\tilde{T}\in (0,\Tmax)~\left|~
\begin{array}{l}
\|n(\cdot,t)- e^{t\Delta} n_0\|_{L^\theta(\Om)}<M_1 \eps\big(1+t^{-\frac{N}{2}(\frac1{p_0}-\frac1\theta)}\big)e^{-\al_1 t} \;\;\forall\;\theta\in[q_0,\infty],\\[6pt]
\|\nabla c(\cdot,t)\|_{\Lin}\le M_2 \eps\big(1+t^{-\frac{1}{2}}\big)e^{-\al_1 t},\\[6pt]
\norm[L^{q_0}(\Om)]{u(\cdot,t)}\leq M_3 \eps \big(1+t^{-\frac12+\frac{N}{2q_0}}\big)e^{-\al_2 t},\\[6pt]
\norm[L^{N}(\Om)]{\nabla u(\cdot,t)}\leq M_4 \eps \big(1+t^{-\frac12}\big)e^{-\al_2 t}\\[6pt]
\qquad \qquad \qquad \qquad \qquad \qquad \qquad \qquad \qquad \qquad \text{ for all } t\in[0,\tilde{T})\\
\end{array}\right\}.\right.
\end{equation}
\end{dnt}
%We set 1/\infty:=0, 1/0:=\infty
By Lemma \ref{lem:locexistence}, $T$ is well-defined and positive.
Thus what we want to show is $T=\infty$.
In doing so, we will proceed in several steps and at first derive estimates for the component $n$ that are satisfied on $(0,T)$. We will then
show that all of the estimates mentioned in \eqref{eq:T} hold true with even smaller coefficients on the right hand side than appearing in \eqref{eq:T} and finally conclude that $T=\infty$.
The derivation of these estimates will mainly rely on Lemma \ref{lem:heat}, Lemma \ref{lem:helmholtzbounded} and Lemma \ref{lem:stokes}  by means of the estimates from Section \ref{sec:constandparam} and on the fact that the classical solutions on $(0,T)$ can be represented as
\begin{align}
& n(\cdot,t)=e^{t\Delta }n_0 -\intnt e^{(t-s)\Delta}\big(\nabla\cdot\left(n S(\cdot,n,c) \nabla c\right) + u\cdot\nabla n \big)(\cdot,s)ds,\label{eq:varofconst_n}\\
& c(\cdot,t)=e^{t\Delta }c_0-\intnt e^{(t-s)\Delta}\big( nc+u\cdot\nabla c \big)(\cdot,s)ds,\label{eq:varofconst_c}\\
& u(\cdot,t)=e^{-tA} u_0 - \intnt e^{-(t-s)A}{\calP }\big((u\cdot\nabla )u - n\nabla\Phi\big)(\cdot,s)ds,\label{eq:varofconst_u}
\end{align}
for all $t\in(0,T_{\max})$
as per the variation-of-constants formula.

\begin{lem}\label{lem:prelimestimaten}
Under the assumptions of Proposition \ref{prop:Szero}, for all $\theta\in[q_0,\infty]$ we have
\begin{equation}\label{eq:n_Ltheta}
 \|n(\cdot,t)-\bar{n}_0\|_{L^\theta(\Om)}\le (M_1+
 {k_1}) \eps\left(1+t^{-\frac{N}{2}(\frac{1}{p_0}-\frac{1}{\theta})}\right)e^{- \al_1 t}\;\;\text{ for all }t\in(0,T).
\end{equation}
%and therefore
%\begin{equation}\label{eq:n_Ltheta_2}
%\norm[L^\theta(\Om)]{n(\cdot,t)}\leq (\nbar_0|\Om|^{\frac1\theta} (M_1+{k_1}) \eps)(1+t^{-\frac{N}{2}(\frac{1}{p_0}-\frac{1}{\theta})}).
%\end{equation}
\end{lem}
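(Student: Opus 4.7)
The plan is to use the triangle inequality to split $n(\cdot,t)-\overline{n}_0$ into the part controlled by the definition of $T$ and a part consisting purely of the semigroup applied to $n_0 - \overline{n}_0$, and then exploit the mean-zero decay estimate from Lemma \ref{lem:heat}(i).

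Concretely, I would write
\[
 n(\cdot,t)-\overline{n}_0 \;=\; \bigl(n(\cdot,t) - e^{t\Delta} n_0\bigr) \;+\; \bigl(e^{t\Delta} n_0 - \overline{n}_0\bigr),
\]
and observe that since the Neumann heat semigroup preserves constants, $e^{t\Delta}\overline{n}_0 = \overline{n}_0$, so the second term equals $e^{t\Delta}(n_0-\overline{n}_0)$. Because $\int_\Omega (n_0-\overline{n}_0) = 0$ by definition of $\overline{n}_0$, part (i) of Lemma \ref{lem:heat} applies with $q=p_0$ and $p=\theta \in [q_0,\infty]$, giving
\[
 \|e^{t\Delta}(n_0-\overline{n}_0)\|_{L^\theta(\Om)} \le k_1 \bigl(1 + t^{-\frac{N}{2}(\frac{1}{p_0}-\frac{1}{\theta})}\bigr) e^{-\lambda_1 t} \|n_0-\overline{n}_0\|_{L^{p_0}(\Om)}.
\]
Using the smallness condition \eqref{eq:propinitcond}, namely $\|n_0-\overline{n}_0\|_{L^{p_0}} \le \eps$, and the fact that $\alpha_1 < \lambda_1$ by \eqref{eq:alpha1} so that $e^{-\lambda_1 t} \le e^{-\alpha_1 t}$, I obtain
\[
 \|e^{t\Delta}n_0 - \overline{n}_0\|_{L^\theta(\Om)} \le k_1 \eps \bigl(1 + t^{-\frac{N}{2}(\frac{1}{p_0}-\frac{1}{\theta})}\bigr) e^{-\alpha_1 t}.
\]

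For the first term, I use the defining property of $T$ directly: for any $t\in(0,T)$ and $\theta \in [q_0,\infty]$,
\[
 \|n(\cdot,t) - e^{t\Delta} n_0\|_{L^\theta(\Om)} < M_1 \eps \bigl(1 + t^{-\frac{N}{2}(\frac{1}{p_0}-\frac{1}{\theta})}\bigr) e^{-\alpha_1 t}.
\]
Adding the two estimates yields the claimed bound with coefficient $(M_1 + k_1)\eps$.

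There is no real obstacle here; the only thing to verify carefully is the mean-zero condition required by Lemma \ref{lem:heat}(i), which holds trivially since $\overline{n}_0$ is the spatial average of $n_0$, together with the observation that $e^{t\Delta}$ commutes with subtraction of this constant. The restriction $\theta \in [q_0,\infty]$ ensures $\theta \ge p_0$ so that Lemma \ref{lem:heat}(i) is applicable, and the exponent $-\frac{N}{2}(\frac{1}{p_0}-\frac{1}{\theta})$ matches exactly in both terms, allowing a clean combination.
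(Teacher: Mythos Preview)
Your proof is correct and follows essentially the same approach as the paper: the same triangle-inequality splitting $n-\bar n_0=(n-e^{t\Delta}n_0)+e^{t\Delta}(n_0-\bar n_0)$, the same appeal to Lemma~\ref{lem:heat}(i) via the mean-zero property and the smallness condition \eqref{eq:propinitcond}, and the same use of the definition of $T$ for the first term. There is nothing to add.
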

\begin{proof}
Since $\nbar_0$ is a constant, $e^{t\Lap}\nbar_0=\nbar_0$ for all $t\in(0,T)$, and moreover due to $\int_{\Om} (n_0-\nbar_0)=0$, Lemma \ref{lem:heat}(i), \eqref{eq:T} and \eqref{eq:propinitcond} show that
\begin{align*}\nn
\|n{(\cdot,t)}-\bar{n}_0\|_{L^\theta(\Om)}&\le\|n{(\cdot,t)}-e^{t\Delta}n_0\|_{\Lt}+\|e^{t\Delta} (n_0-\bar{n}_0)\|_{\Lt}\\\nn
&\le M_1\eps\left(1+t^{-\frac{N}{2}(\frac{1}{p_0}-\frac{1}{\theta})}\right)e^{-\al_1 t}+k_1\left(1+t^{-\frac{N}{2}(\frac{1}{p_0}-\frac{1}{\theta})}\right)e^{-\lambda_1 t}\|n_0-\bar{n}_0\|_{L^{p_0}(\Om)}\\\nn
&\le M_1\eps\left(1+t^{-\frac{N}{2}(\frac{1}{p_0}-\frac{1}{\theta})}\right)e^{-\al_1 t}+
k_1\left(1+t^{-\frac{N}{2}(\frac{1}{p_0}-\frac{1}{\theta})}\right)e^{-\lambda_1 t}\eps
\\
&\le (M_1+
k_1) \eps\left(1+t^{-\frac{N}{2}(\frac{1}{p_0}-\frac{1}{\theta})}\right)e^{- \al_1 t}
\end{align*}
for all $t\in(0,T)$, $\theta\in[q_0,\infty]$.
\end{proof}

\begin{lem}\label{lem:estimatec}
Under the assumptions of Proposition \ref{prop:Szero}, the second component of the solution satisfies
\begin{align}
\label{eq:clinfty}
\|c(\cdot,t)\|_{L^\infty(\Om)}\le e^{(M_1+{k_1})\sigma\eps}\eps e^{-\al_1 t}\qquad\text{ for all }t\in(0,T)
\end{align}
with $\sigma$ taken from \eqref{eq:defsigma}.
\end{lem}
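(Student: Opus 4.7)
The plan is to exploit the absorption term $-nc$ in the second equation of \eqref{0} together with the fact that on $(0,T)$ the density $n$ is close to its (time-independent) spatial mean $\bar n_0=m$, as quantified by Lemma \ref{lem:prelimestimaten}. Writing $-nc=-mc+(m-n)c$ suggests that multiplying $c$ by $e^{mt}$ should essentially remove the damping, leaving only the small perturbation $(m-n)c$, which is controllable since $\int_0^\infty (1+s^{-N/(2p_0)})e^{-\alpha_1 s}ds=\sigma<\infty$ thanks to the assumption $p_0>N/2$.

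Concretely, I would set
\[
\rho(t):=\exp\!\Bigl(mt-\int_0^t\|n(\cdot,s)-\bar n_0\|_{L^\infty(\Omega)}\,ds\Bigr),\qquad v(x,t):=\rho(t)\,c(x,t),
\]
and compute, using $c_t=\Delta c-nc-u\cdot\nabla c$ and $\bar n_0=m$,
\[
v_t-\Delta v+u\cdot\nabla v=\bigl(m-\|n(\cdot,t)-\bar n_0\|_{L^\infty(\Omega)}-n(x,t)\bigr)v.
\]
Since $\bar n_0-n(x,t)\le|n(x,t)-\bar n_0|\le\|n(\cdot,t)-\bar n_0\|_{L^\infty(\Omega)}$, the coefficient in front of $v$ is pointwise non-positive. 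Together with the Neumann condition $\partial_\nu c=0$, the boundary condition $u=0$ on $\partial\Omega$, and $c\geq 0$ (from Lemma \ref{lem:locexistence}), the parabolic maximum principle yields
\[
\|v(\cdot,t)\|_{L^\infty(\Omega)}\le\|v(\cdot,0)\|_{L^\infty(\Omega)}=\|c_0\|_{L^\infty(\Omega)}\le\eps
\]
for all $t\in(0,T)$.

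Unwinding the definition of $\rho$ gives
\[
\|c(\cdot,t)\|_{L^\infty(\Omega)}\le\eps\,e^{-mt}\exp\!\Bigl(\int_0^t\|n(\cdot,s)-\bar n_0\|_{L^\infty(\Omega)}\,ds\Bigr),
\]
and Lemma \ref{lem:prelimestimaten} (applied with $\theta=\infty$) bounds the integral by
\[
\int_0^t\|n(\cdot,s)-\bar n_0\|_{L^\infty(\Omega)}\,ds\le(M_1+k_1)\eps\int_0^\infty\bigl(1+s^{-\frac{N}{2p_0}}\bigr)e^{-\alpha_1 s}\,ds=(M_1+k_1)\sigma\eps.
\]
Combined with $\alpha_1<m$ (so that $e^{-mt}\le e^{-\alpha_1 t}$), this yields exactly \eqref{eq:clinfty}.

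There is no substantial obstacle: the argument is a maximum-principle estimate after a careful choice of exponential weight. The only points that need attention are the integrability of $s\mapsto s^{-N/(2p_0)}$ near $s=0$ (guaranteed by $p_0>N/2$, which is what makes $\sigma$ finite in \eqref{eq:defsigma}) and the application of the maximum principle on a bounded interval $[0,t]\subset[0,T)$, which is standard for the linear parabolic problem satisfied by $v$ with its smooth (on this interval) coefficient $n$ and drift $u$ vanishing on $\partial\Omega$.
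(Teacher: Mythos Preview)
Your proof is correct and follows essentially the same idea as the paper: both exploit the pointwise bound $-n\le\|n-\bar n_0\|_{L^\infty}-\bar n_0$ from Lemma~\ref{lem:prelimestimaten} with $\theta=\infty$, integrate the resulting linear differential inequality, and control the remaining integral by $\sigma$. The only cosmetic difference is that the paper tests the $c$-equation with $pc^{p-1}$, obtains the same Gronwall bound for $\|c(\cdot,t)\|_{L^p}$, and then lets $p\to\infty$, whereas you introduce the exponential weight $\rho(t)$ and invoke the maximum principle directly; these are equivalent routes to the same estimate.
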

\begin{proof}
We let $p\ge 1$,
multiply the second equation of \eqref{0} by $pc^{p-1}$ and integrate over $\Om$, so that we have
\beaa
\frac{d}{dt}\intO c^p\le -p\intO nc^p \qquad \mbox{ on }(0,T).
\eea
By an obvious pointwise estimate and \eqref{eq:n_Ltheta} with $\theta=\infty$,
\begin{equation}
-n(x,t)\le\|n(\cdot,t)-\bar{n}_0\|_{\Lin}-\bar{n}_0\le \left(M_1+
{k_1}\right)\eps\left(1+t^{-\frac{N}{2p_0}}\right)e^{-\al_1 t}-\bar{n}_0\qquad\mbox{for all } x\in\Om, t\in(0,T).
\end{equation}
Due to the nonnegativity of $pc^p$, we infer that
\beaa
\frac{d}{dt}\intO c^p\le \left((M_1+{k_1})\eps(1+t^{-\frac{N}{2p_0}})e^{-\al_1 t}-\bar{n}_0\right)p\intO c^p
\eea
for all $t\in(0,T)$.
Thus we get
\beaa
\intO c^p\le \exp\bigg({p\int_0^t\left((M_1+{k_1})\eps(1+s^{-\frac{N}{2p_0}})e^{-\al_1 s}-\bar{n}_0\right)ds}\bigg)\intO c_0^p \qquad \mbox{for all }t\in(0,T). %e^{p\int_0^t(\|n-\bar{n}_0\|_{\Lin}-\bar{n}_0)ds}\intO c_0^p.
\eea
Taking the $p$-th root on both sides, we are left with
\begin{align*}
\|c{(\cdot,t)}\|_{\Lp}&\le \|c_0\|_{\Lp} e^{-\bar{n}_0t} \exp\bigg({\eps(M_1+{k_1})\int_0^t (1+s^{-\frac{N}{2p_0}})e^{-\al_1 s}ds}\bigg)\\\nn
&\le  \|c_0\|_{\Lp} e^{-\bar{n}_0t}e^{(M_1+
{k_1})\sigma\eps}\qquad \mbox{for all } t\in(0,T),
\end{align*}
which holds for arbitrary $p\ge 1$ and where $\sigma$ is as defined in \eqref{eq:defsigma}.
In the limit $p\to\infty$, we therefore obtain
\begin{equation}\label{eq:thisiswhereweneedcontrolonc0}
\|c(\cdot,t)\|_{\Lin}\le \norm[\Lin]{c_0}e^{\sigma(M_1+{k_1})\eps}e^{-\bar{n}_0t}
\end{equation}
for all $t\in(0,T)$ and may, due to \eqref{eq:alpha1}, \eqref{eq:propinitcond}, conclude \eqref{eq:clinfty}.
\end{proof}

\begin{lem}\label{lem:estimateu}
Under the assumptions of Proposition \ref{prop:Szero}, the component $u$ of the solution satisfies
\beaa
\norm[L^{q_0}(\Omega)]{u(\cdot,t)}\leq \frac{M_3}2 \eps \left(1+t^{-\frac12+\frac{N}{2q_0}}\right)e^{-\al_2 t}\qquad \text{ for all } t\in(0,T).
\eea
\end{lem}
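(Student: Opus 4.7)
The plan is to start from the variation-of-constants representation \eqref{eq:varofconst_u} and split
\[
 \|u(\cdot,t)\|_{L^{q_0}(\Om)} \le \|e^{-tA}u_0\|_{L^{q_0}(\Om)} + \int_0^t \|e^{-(t-s)A}\calP(n\nabla\Phi)\|_{L^{q_0}(\Om)}ds + \int_0^t \|e^{-(t-s)A}\calP((u\cd\na)u)\|_{L^{q_0}(\Om)}ds,
\]
and show that each of these three terms is bounded by $\tfrac{1}{6}M_3\eps(1+t^{-\frac12+\frac{N}{2q_0}})e^{-\al_2 t}$ — or rather by quantities whose sum, by \eqref{eq:M3}, is at most $\tfrac{M_3}{2}\eps(1+t^{-\frac12+\frac{N}{2q_0}})e^{-\al_2 t}$.

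For the initial-data contribution I would apply Lemma \ref{lem:stokes}(ii) with $p=N$ and $q=q_0$, noting that $-\tfrac N2(\tfrac1N-\tfrac{1}{q_0})=-\tfrac12+\tfrac{N}{2q_0}$ and recalling $\mu>\al_2$, so that
\[
 \|e^{-tA}u_0\|_{L^{q_0}(\Om)} \le k_7(N,q_0)\eps\bigl(1+t^{-\frac12+\frac{N}{2q_0}}\bigr)e^{-\al_2 t}.
\]
For the $n\nabla\Phi$ integral, the decisive observation is that $\bar n_0\na\Phi=\na(\bar n_0\Phi)$ is a pure gradient, so $\calP(n\na\Phi)=\calP((n-\bar n_0)\na\Phi)$; this replacement is what will ultimately give exponential decay coming from $\al_1$. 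Then Lemma \ref{lem:stokes}(ii) (with $p=q=q_0$), Lemma \ref{lem:helmholtzbounded} and the pointwise bound $\|\na\Phi\|_{L^\infty(\Om)}$ reduce matters to estimating $\|n-\bar n_0\|_{L^{q_0}(\Om)}$, for which Lemma \ref{lem:prelimestimaten} with $\theta=q_0$ applies. The resulting time integral is exactly the one controlled by \eqref{eq:C1}, producing the bound $k_7(q_0,q_0)k_5(q_0)(M_1+k_1)C_1\|\na\Phi\|_{L^\infty(\Om)}\eps\, e^{-\al_2 t}$, which one enlarges by the harmless factor $(1+t^{-\frac12+\frac{N}{2q_0}})\ge 1$.

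The nonlinear convective term is the main obstacle: one has to convert the product $(u\cd\na)u$ into a quantity controlled by the information available in \eqref{eq:T}, namely $\|u\|_{L^{q_0}}$ and $\|\na u\|_{L^N}$. I would use Hölder with exponent $r:=\bigl(\tfrac1{q_0}+\tfrac1N\bigr)^{-1}=\tfrac{N}{1+N/q_0}$ to write
\[
 \|(u\cd\na)u\|_{L^r(\Om)} \le \|u\|_{L^{q_0}(\Om)}\|\na u\|_{L^N(\Om)},
\]
and then apply Lemma \ref{lem:stokes}(ii) with $p=r$, $q=q_0$, observing that $\tfrac N2(\tfrac1r-\tfrac1{q_0})=\tfrac12$; combined with Lemma \ref{lem:helmholtzbounded} this yields an integrand proportional to $(t-s)^{-1/2}e^{-\mu(t-s)}\|u(\cdot,s)\|_{L^{q_0}}\|\na u(\cdot,s)\|_{L^N}$. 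Using the definition of $T$ and Lemma \ref{lem:product} to combine the two resulting powers of $s$ into $3(1+s^{-1+\frac{N}{2q_0}})$, and bounding $e^{-2\al_2 s}\le e^{-\al_2 s}$, the integral becomes precisely the one estimated in \eqref{eq:C2}, which gives the contribution
\[
 3k_7\bigl(\tfrac{N}{1+N/q_0},q_0\bigr)k_5\bigl(\tfrac{N}{1+N/q_0}\bigr)M_3M_4 C_2\eps\cdot\eps\bigl(1+t^{-\frac12+\frac{N}{2q_0}}\bigr)e^{-\al_2 t}.
\]
Summing the three bounds and invoking \eqref{eq:M3} yields the claim. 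The delicate point throughout is bookkeeping of exponents so that the Hölder splitting matches the available space-time regularity, and so that the integrals fall inside the framework of Lemma \ref{lem:integralestimates} as already packaged into $C_1$ and $C_2$.
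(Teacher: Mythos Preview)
Your proposal is correct and follows essentially the same route as the paper's proof: the same three-term split of the Duhamel formula, the same use of $\calP(\bar n_0\nabla\Phi)=0$ to replace $n$ by $n-\bar n_0$, the same H\"older splitting of $(u\cdot\nabla)u$ into $L^{q_0}\times L^N$, and the same invocation of the prepackaged integral bounds \eqref{eq:C1} and \eqref{eq:C2} before summing via \eqref{eq:M3}. Your explicit remark that $e^{-2\al_2 s}\le e^{-\al_2 s}$ is needed before applying \eqref{eq:C2} is a detail the paper leaves implicit.
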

\begin{proof}
If we use that $\P\nabla\Phi=0$ and apply the triangle inequality in the variation-of-constants formula \eqref{eq:varofconst_u}
% \[
%  u(\cdot,t)=e^{-tA}u_0+\intnt e^{-(t-s)A}\calP  ((n-\nbar_0)\nabla\Phi) ds +\intnt e^{-(t-s)A}\calP ((u\cdot\nabla)u) ds, \quad t\in (0,T)
% \]
for $u$, Lemma \ref{lem:helmholtzbounded}  and Lemma \ref{lem:stokes} (ii) yield
\begin{align*}
 \norm[L^{q_0}(\Om)]{u(\cdot,t)}\leq& k_7(N,q_0) t^{-\frac N2(\frac1N-\frac1{q_0})} e^{-\my t} \norm[L^N(\Om)]{u_0}\\
 &+ \intnt k_7(q_0,q_0)k_5(q_0) e^{-\my (t-s)}\norm[L^{q_0}(\Om)]{n(\cdot,s)-\nbar_0}\norm[\Lin]{\nabla\Phi} ds \\% c_{P,2}=1
 &+ \intnt k_7({\scriptstyle \frac{N}{1+\frac{N}{q_0}}},q_0) (t-s)^{-\frac N2\big(\frac{1+{\frac N{q_0}}}N-\frac1{q_0}\big)}e^{-\mu(t-s)}\norm[L^{\frac N{1+{\frac N{q_0}}}}(\Om)]{\calP {(u\cdot\nabla u)(\cdot,s)}} ds\\
 =:& k_7(N,q_0) t^{-\frac12+\frac{N}{2q_0}}e^{-\mu t}\norm[L^N(\Om)]{u_0} +I_1 + I_2
\end{align*}
for all $t\in(0,T)$.
Here an application of estimate \eqref{eq:n_Ltheta} for $\theta=q_0$ and \eqref{eq:C1} in the first integral shows that
\begin{align*}
 I_1 &\leq k_5(q_0)k_7(q_0,q_0) (M_1+k_1)\norm[L^\infty(\Om)]{\na\Phi} \intnt  \eps \left(1+s^{-\frac N2(\frac1{p_0}-\frac1{q_0})}\right)e^{-\mu(t-s)} e^{-\alpha_1 s} ds \\\label{c1}
 &\leq k_5(q_0)k_7(q_0,q_0) (M_1+{k_1})\norm[L^\infty(\Om)]{\na\Phi}\eps C_1e^{-\al_2 t}\\%e^{-\min\{\alpha,\mu\} t}\\
 &\le k_5(q_0)k_7(q_0,q_0) (M_1+{k_1})\norm[L^\infty(\Om)]{\na\Phi} C_1(1+t^{-\frac{1}{2}+\frac{N}{2q_0}})e^{-\al_2 t}
 \eps
\end{align*}
for all $t\in(0,T)$. An application of H\"older's inequality and Lemma \ref{lem:helmholtzbounded} show that
\begin{align*}\norm[L^{\frac{N}{1+{\frac N{q_0}}}}(\Om)]{\calP {(u\cdot\nabla u)(\cdot,t)}}\leq k_5({\scriptstyle\frac{N}{1+\frac{N}{q_0}}}) \norm[L^{q_0}(\Om)]{u{(\cdot,t)}}\norm[L^N(\Om)]{\nabla u{(\cdot,t)}}\qquad \mbox{for all }t\in(0,T)
\end{align*}
and the estimates for the latter two terms, which are valid by \eqref{eq:T}, give
\begin{align*}
I_2&\leq k_7({\scriptstyle \frac{N}{1+\frac{N}{q_0}}},q_0) k_5({\scriptstyle\frac{N}{1+\frac{N}{q_0}}})
 \intnt (t-s)^{-\frac12} M_3M_4 \eps^2 e^{-\mu(t-s)} (1+s^{-\frac12+\frac{N}{2q_0}})e^{-\al_2s}(1+s^{-\frac12})e^{-\al_2s} ds\\
&\leq  k_7({\scriptstyle \frac{N}{1+\frac{N}{q_0}}},q_0) k_5({\scriptstyle\frac{N}{1+\frac{N}{q_0}}}) M_3M_4\eps^2 \intnt (t-s)^{-\frac12} e^{-\mu(t-s)} 3 (1+s^{-1+\frac{N}{2q_0}}) e^{-2\al_2s}ds\\
&\leq 3k_7({\scriptstyle \frac{N}{1+\frac{N}{q_0}}},q_0) k_5({\scriptstyle\frac{N}{1+\frac{N}{q_0}}}) M_3M_4\eps^2 C_2 \left(1+t^{-\frac{1}{2}+\frac{N}{2q_0}}\right)e^{-\al_2 t}\fat,
\end{align*}
where we have also used Lemma \ref{lem:product} and \eqref{eq:C2}. Hence,
 \begin{align*}
  \norm[L^{q_0}(\Om)]{u{(\cdot,t)}}\leq&
  k_7(N,q_0) t^{-\frac12+\frac{N}{2q_0}}e^{-\mu t}  \eps +
  k_5(q_0)k_7(q_0,q_0) (M_1+{k_1})\norm[\Lin]{\na\Phi} C_1\left(1+t^{-\frac{1}{2}+\frac{N}{2q_0}}\right)e^{-\alpha t} \eps\\
  &+3k_7({\scriptstyle \frac{N}{1+\frac{N}{q_0}}},q_0) k_5({\scriptstyle\frac{N}{1+\frac{N}{q_0}}}) M_3M_4\eps^2 C_2 \left(1+t^{-\frac{1}{2}+\frac{N}{2q_0}}\right)e^{-\al_2 t}\\
 \leq&\bigg(k_7(N,q_0)+k_5(q_0)k_7(q_0,q_0) (M_1+{k_1})\norm[\Lin]{\na\Phi} C_1\\\nn
 &~~~~~~~~~~\qquad+3k_7({\scriptstyle \frac{N}{1+\frac{N}{q_0}}},q_0) k_5({\scriptstyle\frac{N}{1+\frac{N}{q_0}}}) M_3M_4 C_2\eps\bigg)\eps \left(1+t^{-\frac12+\frac{N}{2q_0}}\right)e^{-\al_2 t}\\
 \leq& \frac{M_3}2 \eps \left(1+t^{-\frac12+\frac{N}{2q_0}}\right)e^{-\al_2 t}
 \end{align*}
 for all $t\in(0,T)$, according to \eqref{eq:M3}.
\end{proof}

Also the estimate for the gradient is preserved:
\begin{lem}\label{lem:estimatenau}
Under the assumptions of Proposition \ref{prop:Szero}, we also have
\[
 \norm[L^N(\Om)]{\nabla u(\cdot,t)}\leq \frac\eps2 M_4 \left(1+t^{-\frac12}\right)e^{-\al_2 t},\;\;\text{ for all } t\in(0,T).
\]
\end{lem}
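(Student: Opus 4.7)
The plan is to mimic the strategy from Lemma \ref{lem:estimateu}, starting from the variation-of-constants formula \eqref{eq:varofconst_u}, applying $\nabla$, taking $L^N(\Om)$-norms, and then exploiting Lemma \ref{lem:stokes}(iii) to move derivatives onto the semigroup. Since $\calP\na\Phi=0$, in the buoyancy term we may replace $n$ by $n-\nbar_0$, which is what eventually provides the decay factor $e^{-\alpha_1 s}$ through Lemma \ref{lem:prelimestimaten}. This yields three contributions:
\[
 \norm[L^N(\Om)]{\nabla u(\cdot,t)} \leq \norm[L^N(\Om)]{\nabla e^{-tA}u_0} + I_3 + I_4,
\]
where $I_3$ is the buoyancy term and $I_4$ the convective term.

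For the initial-data term, Lemma \ref{lem:stokes}(iii) with $p=q=N$ and $\mu>\alpha_2$ gives
\[
 \norm[L^N(\Om)]{\nabla e^{-tA}u_0}\leq k_8(N,N)\,t^{-\frac12}e^{-\mu t}\eps\leq k_8(N,N)\,\eps(1+t^{-\frac12})e^{-\alpha_2 t}.
\]
For $I_3$, I apply Lemma \ref{lem:stokes}(iii) with $p=q=N$ again, then Lemma \ref{lem:helmholtzbounded}, H\"older's inequality and Lemma \ref{lem:prelimestimaten} with $\theta=q_0$ to obtain
\[
 \norm[L^N(\Om)]{\calP((n-\nbar_0)\na\Phi)}\leq k_5(N)|\Om|^{\frac{q_0-N}{Nq_0}}(M_1+k_1)\eps(1+s^{-\frac N2(\frac1{p_0}-\frac1{q_0})})e^{-\alpha_1 s}\norm[\Lin]{\na\Phi},
\]
and then the $s$-integral is exactly the one estimated in \eqref{eq:C3}, giving a bound by $C_3(1+t^{-\frac12})e^{-\alpha_2 t}$.

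For $I_4$, I use Lemma \ref{lem:stokes}(iii) with $p=(\tfrac1{q_0}+\tfrac1N)^{-1}$ and $q=N$, which produces the temporal singularity $(t-s)^{-\frac12-\frac{N}{2q_0}}$ and permits the H\"older splitting
\[
 \norm[L^{(\tfrac1{q_0}+\tfrac1N)^{-1}}(\Om)]{\calP((u\cdot\na)u)}\leq k_5({\scriptstyle\frac1{\frac1{q_0}+\frac1N}})\norm[L^{q_0}(\Om)]{u}\norm[L^N(\Om)]{\nabla u}.
\]
Inserting the bounds $M_3\eps(1+s^{-\frac12+\frac N{2q_0}})e^{-\alpha_2 s}$ and $M_4\eps(1+s^{-\frac12})e^{-\alpha_2 s}$ from \eqref{eq:T}, consolidating the two factors via Lemma \ref{lem:product} into $3(1+s^{-1+\frac N{2q_0}})$, and invoking \eqref{eq:C4} yields
\[
 I_4\leq 3 k_8({\scriptstyle\frac1{\frac1{q_0}+\frac1N}},N) k_5({\scriptstyle\frac1{\frac1{q_0}+\frac1N}}) M_3M_4 C_4\,\eps^2(1+t^{-\frac12})e^{-\alpha_2 t}.
\]
Summing all three contributions and pulling out $\eps(1+t^{-\frac12})e^{-\alpha_2 t}$, the prefactor is precisely the left-hand side of \eqref{eq:M4}, which by Lemma \ref{lem:chooseM} is bounded above by $M_4/2$.

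The only mild obstacle is arithmetic bookkeeping: one has to verify that the chosen H\"older exponents match the hypotheses of Lemma \ref{lem:stokes}(iii) and that the resulting singularity/decay parameters are exactly those whose integrals were precomputed as $C_3$ and $C_4$ in Section \ref{sec:constandparam}. Since those constants were tailored for this purpose, everything fits together and the argument is a close parallel of Lemma \ref{lem:estimateu}.
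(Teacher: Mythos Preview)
Your proposal is correct and follows essentially the same approach as the paper's proof: the same variation-of-constants splitting into initial-data, buoyancy, and convective contributions, the same H\"older exponents and semigroup estimates from Lemma~\ref{lem:stokes}(iii), and the same use of the precomputed constants $C_3$ and $C_4$ together with \eqref{eq:M4}.
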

\begin{proof}
Starting from
\[ \na u(\cdot,t)=\na e^{-tA} u_0 +\intnt \na e^{-(t-s)A} \P \left((n(\cdot,s)-\nbar_0)\na \Phi\right)ds +\intnt \na e^{-(t-s)A} \P(u\cdot\na)u(\cdot,s) ds, \quad t\in(0,T),\]
we obtain from Lemma \ref{lem:stokes}(iii), H\"older's inequality, Lemma \ref{lem:helmholtzbounded} and \eqref{eq:n_Ltheta} that
\begin{align*}
\norm[L^N(\Om)]{\nabla u(\cdot,t)}&\le \norm[L^N(\Om)]{\nabla e^{tA}u_0}
+\intnt k_8(N,N)(t-s)^{-\frac12}e^{-\mu(t-s)}k_5(N)\norm[L^N(\Om)]
{(n(\cdot,s)-\nbar_0)\nabla\Phi}ds\\\nn
&~~~~+\intnt k_8({\scriptstyle \frac{1}{\frac1{q_0}+\frac1N}},N)
(t-s)^{-\frac12-\frac N2(\frac1{q_0}+\frac1N-\frac1N)}
e^{-\mu(t-s)} k_5({\scriptstyle \frac1{\frac1{q_0}+\frac1N}})
\norm[{L^{\scriptstyle \frac{1}{\frac1{q_0}+\frac1N}}(\Om)}]
{{(u\cdot\nabla) u}(\cdot,s)}ds
\\\nn
&\leq
k_8(N,N) t^{-\frac12}e^{-\mu t} \norm[L^N(\Om)]{u_0}\\
&~~~~+\intnt k_8(N,N) (t-s)^{-\frac12}|\Om|^{\frac{q_0-N}{Nq_0}}\norm[L^{q_0}(\Om)]{n(\cdot,s)-\nbar_0}\norm[\Lin]{\na \Phi}e^{-\mu  (t-s)}ds\\\nn
&~~~~~~+k_8({\scriptstyle \frac{1}{\frac1{q_0}+\frac1N}},N)\intnt (t-s)^{-\frac12-\frac{N}{2q_0}}%-\frac{1+{\frac N{q_0}}}2}
e^{-\mu (t-s) }k_5({\scriptstyle \frac1{\frac1{q_0}+\frac1N}})\norm[L^{q_0}(\Om)]{u(\cdot,s)}
\norm[L^N(\Om)]{\nabla u(\cdot,s)}ds\\\nn
% &\leq k_8(N,N) t^{-\frac12}e^{-\mu t} \norm[L^N(\Om)]{u_0}\\
% &~~~~ +k_8(N,N)k_5(N)|\Om|^{\frac{q_0-N}{Nq_0}}(M_1+{k_1})\norm[\Lin]{\nabla\Phi} \eps \intnt (t-s)^{-\frac12}(1+s^{-\frac N2(\frac1{p_0}-\frac1{q_0})})e^{-{\mu} (t-s)}e^{-\alpha s} ds\\
% &~~~~~~+ k_8({\scriptstyle \frac{1}{\frac1{q_0}+\frac1N}},N) k_5({\scriptstyle \frac1{\frac1{q_0}+\frac1N}}) \intnt e^{-\mu (t-s)}(t-s)^{-\frac12-\frac{N}{2q_0}}(1+s^{-\frac12+\frac N{2q_0}}){e^{-\alpha_2 s}} (1+s^{-\frac12})e^{-\al_2s} M_3M_4 \eps^2 ds\\
 &\leq k_8(N,N) t^{-\frac12}e^{-\al_2 t} \norm[L^N(\Om)]{u_0} +I_3+I_4 \fat.
\end{align*}
Here by \eqref{eq:C3}, we have
\begin{align*}\nn
I_3&\leq k_8(N,N)k_5(N)|\Om|^{\frac{q_0-N}{Nq_0}}(M_1+{k_1})\norm[\Lin]{\nabla\Phi} \eps \intnt (t-s)^{-\frac12}(1+s^{-\frac N2(\frac1{p_0}-\frac1{q_0})})e^{-\mu  (t-s)}e^{-\alpha_1 s} ds\\
&\leq k_8(N,N)k_5(N)|\Om|^{\frac{q_0-N}{Nq_0}}(M_1+{k_1}) \norm[\Lin]{\nabla\Phi} \eps C_3 (1+t^{-\frac12})e^{-\al_2 t}\qquad \mbox{for all }t\in(0,T).
\end{align*}

Furthermore, by Lemma \ref{lem:product} and \eqref{eq:C4},
\begin{align*}
 I_4&\leq \eps^2 M_3M_4  k_8({\scriptstyle \frac{1}{\frac1{q_0}+\frac1N}},N) k_5({\scriptstyle \frac1{\frac1{q_0}+\frac1N}})
 \intnt e^{-\mu (t-s)}(t-s)^{-\frac12-\frac{N}{2q_0}}(1+s^{-\frac12+\frac N{2q_0}}) (1+s^{-\frac12}) e^{-2\al_2s} ds\\
&\le 3 \eps^2 M_3M_4  k_8({\scriptstyle \frac{1}{\frac1{q_0}+\frac1N}},N) k_5({\scriptstyle \frac1{\frac1{q_0}+\frac1N}})
 \intnt e^{-\mu (t-s)}(t-s)^{-\frac12-\frac{N}{2q_0}}(1+s^{-1+\frac N{2q_0}})e^{-2\al_2s} ds\\
&\le 3 \eps^2 M_3M_4  k_8({\scriptstyle \frac{1}{\frac1{q_0}+\frac1N}},N) k_5({\scriptstyle \frac1{\frac1{q_0}+\frac1N}}) C_4 \left(1+t^{-\frac12}\right)e^{-\al_2t} \qquad \mbox{for all } t\in(0,T).
 \end{align*}
And thus finally, thanks to the above estimate and (\ref{eq:M4}), we arrive at
 \begin{align*}
  \norm[N]{\nabla {u(\cdot,t)}}\leq&
  k_8(N,N) t^{-\frac12}e^{-\mu t} \eps
  +k_8(N,N)k_5(N)|\Om|^{\frac{q_0-N}{Nq_0}}(M_1+{k_1}) \norm[\infty]{\nabla\Phi} \eps C_3 \left(1+t^{-\frac12}\right)e^{-\al_2 t}\\\nn
 &~~~~
  +3 \eps^2 M_3M_4  k_8({\scriptstyle \frac{1}{\frac1{q_0}+\frac1N}},N) k_5({\scriptstyle \frac1{\frac1{q_0}+\frac1N}}) C_4 \left(1+t^{-\frac12}\right)e^{-\al_2 t}\\
  %\leq& \bigg(k_8(N,N)+k_8(N,N)k_5(N)|\Om|^{\frac{q_0-N}{Nq_0}}(M_1+{k_1}) \norm[\infty]{\nabla\Phi} C_3 \\
 % &~~\qquad\qquad\quad~+ 3 \eps M_3M_4  k_8({\scriptstyle \frac{1}{\frac1{q_0}+\frac1N}},N) k_5({\scriptstyle \frac1{\frac1{q_0}+\frac1N}}) C_4 \bigg)\eps (1+t^{-\frac12})e^{-\al_2 t}\\
  &\leq\bigg(k_8(N,N)+k_8(N,N)k_5(N)|\Om|^{\frac{q_0-N}{Nq_0}}(M_1+{k_1}) C_3\norm[\Lin]{\nabla\Phi}\\
  &\qquad\qquad\quad\qquad~
+3 k_8({\scriptstyle \frac{1}{\frac1{q_0}+\frac1N}},N) k_5({\scriptstyle \frac1{\frac1{q_0}+\frac1N}}) C_4 M_3M_4\eps
\bigg)\eps \left(1+t^{-\frac12}\right)e^{-\al_2 t}\\
  \leq& \frac{\eps M_4}2\left(1+t^{-\frac12}\right)e^{-\al_2 t}
 \end{align*}
for all $t\in(0,T)$.
\end{proof}

\begin{lem}\label{lem:estimatenac}
Under the assumptions of Proposition \ref{prop:Szero},  we have
\begin{align}\nn
\|\nabla c(\cdot,t)\|_{\Lin}\le \frac{\eps M_2}2\left(1+t^{-\frac{1}{2}}\right)e^{-\al_1 t}
\end{align}
for all $t\in(0,T)$.
\end{lem}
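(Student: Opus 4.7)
The plan is to start from the variation-of-constants representation \eqref{eq:varofconst_c}, take a gradient, and estimate term by term in $L^\infty(\Om)$, matching each contribution to one of the summands appearing in \eqref{eq:M2}. The split reads
\[
 \nabla c(\cdot,t) = \nabla e^{t\Delta} c_0 - \intnt \nabla e^{(t-s)\Delta}(nc)(\cdot,s)\,ds - \intnt \nabla e^{(t-s)\Delta}(u\cdot\nabla c)(\cdot,s)\,ds,
\]
and I would treat these three terms $J_0$, $J_1$, $J_2$ separately for $t\in(0,T)$.

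For $J_0$, Lemma \ref{lem:heat}(ii) with $q=\infty$ together with $\alpha_1<\lambda_1$ and the smallness $\norm[\Lin]{c_0}\le\eps$ from \eqref{eq:propinitcond} yields
\[
 \norm[\Lin]{J_0} \le k_2(1+t^{-\frac12})e^{-\lambda_1 t}\norm[\Lin]{c_0} \le k_2\eps(1+t^{-\frac12})e^{-\alpha_1 t},
\]
which accounts for the leading $k_2$ in \eqref{eq:M2}. For $J_1$ I apply Lemma \ref{lem:heat}(ii) again with $q=\infty$, and combine Lemma \ref{lem:prelimestimaten} with $\theta=\infty$ and Lemma \ref{lem:estimatec} to obtain
\[
 \norm[\Lin]{(nc)(\cdot,s)} \le \bigl(m+(M_1+k_1)\eps(1+s^{-\frac{N}{2p_0}})\bigr)\,e^{(M_1+k_1)\sigma\eps}\eps e^{-\alpha_1 s},
\]
so that, after absorbing the constant term into $(1+s^{-N/(2p_0)})$, the time integral becomes precisely the one controlled by \eqref{eq:C5}, producing the contribution $C_5 k_2(m+(M_1+k_1)\eps)e^{(M_1+k_1)\sigma\eps}\eps(1+t^{-\frac12})e^{-\alpha_1 t}$.

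For $J_2$ I apply Lemma \ref{lem:heat}(ii) with $q=q_0$, then use H\"older's inequality to get $\norm[L^{q_0}(\Om)]{u\cdot\nabla c}\le \norm[L^{q_0}(\Om)]u\,\norm[\Lin]{\nabla c}$, and plug in the bounds from \eqref{eq:T} on $\norm[L^{q_0}]{u}$ and $\norm[\Lin]{\nabla c}$. Lemma \ref{lem:product} (both exponents nonpositive, valid since $q_0\ge N$) simplifies $(1+s^{-\frac12+\frac{N}{2q_0}})(1+s^{-\frac12})\le 3(1+s^{-1+\frac{N}{2q_0}})$, and $e^{-\alpha_2 s}\le 1$ allows combining the two exponentials into $e^{-\alpha_1 s}$; estimate \eqref{eq:C6} then yields $\norm[\Lin]{J_2}\le 3k_2 M_2 M_3 C_6\eps^2(1+t^{-\frac12})e^{-\alpha_1 t}$. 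Summing the three bounds and invoking \eqref{eq:M2} gives the desired inequality, with no real obstacle beyond the careful bookkeeping of time exponents needed to make Lemma \ref{lem:integralestimates} (through \eqref{eq:C5} and \eqref{eq:C6}) applicable; the main point to watch is that the worst singularity in the source term for $J_2$ is $s^{-1+N/(2q_0)}$, which is integrable only because $q_0>N$.
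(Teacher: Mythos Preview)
Your proposal is correct and follows essentially the same route as the paper: the same variation-of-constants split, the same application of Lemma~\ref{lem:heat}(ii) to each piece (with $q=\infty$ for $J_0$ and $J_1$, $q=q_0$ for $J_2$), the same use of Lemmata~\ref{lem:prelimestimaten} and~\ref{lem:estimatec} for the $nc$-term, and the same combination of Lemma~\ref{lem:product} with \eqref{eq:C5} and \eqref{eq:C6} before invoking \eqref{eq:M2}. The only cosmetic difference is notation ($J_0,J_1,J_2$ versus the paper's $I_5,I_6$).
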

\begin{proof}
If we use the variation-of-constants formula \eqref{eq:varofconst_c} for $c$, we obtain from Lemma \ref{lem:heat}(ii) that
\begin{align}\nn
\|\nabla c(\cdot,t)\|_{\Lin}&\le\|\nabla e^{t\Delta}c_0\|_{\Lin}+\int_0^t\|\nabla e^{(t-s)\Delta}n(\cdot,s)c(\cdot,s)\|_{\Lin}ds\\\nn
&~~~~~~~~~~~~~~~~~~~~~~~~~~~{+}\int_0^t \|\nabla e^{(t-s)\Delta}u(\cdot,s)\cdot\nabla c(\cdot,s)\|_{\Lin}ds\\
&\le { k_2\left(1+t^{-\frac 12}\right)e^{-\lambda_1 t}\| c_0\|_{\Lin}}+I_5+I_6 \qquad \mbox{ on }(0,T). \label{eq:thisischangedbyothercond}
\end{align}
In the first integral we can again apply Lemma \ref{lem:heat}(ii), which gives
\begin{align}\nn
I_5&\le\int_0^t k_2(1+(t-s)^{-\frac{1}{2}})e^{-\lambda_1(t-s)}\|n(\cdot,s)c(\cdot,s)\|_{\Lin}ds\\\nn
 &{\le} \int_0^t k_2(1+(t-s)^{-\frac{1}{2}})e^{-\lambda_1(t-s)}{\|n(\cdot,s)\|_{\Lin}}\|c(\cdot,s)\|_{\Lin}ds
\end{align}
on $(0,T)$.  At this point, Lemma \ref{lem:prelimestimaten}, Lemma \ref{lem:estimatec} and \eqref{eq:C5} lead to
\begin{align}\nn
I_5&\le \int_0^t k_2(1+(t-s)^{-\frac{1}{2}})e^{-\lambda_1(t-s)}
\big(\nbar_0+(M_1+{k_1})\eps\big)(1+s^{-\frac{N}{2p_0}})
\eps e^{\sigma(M_1+{k_1})\eps}e^{- \al_1 s}ds\\\nn
&\le C_5k_2\Big(\nbar_0+(M_1+{k_1})\eps\Big) e^{(M_1+
{k_1})\sigma\eps}\eps\left(1+t^{-\frac{1}{2}}\right)e^{-\al_1 t}
\end{align}
for all $t\in(0,T)$ by \eqref{eq:C5}.\\
% Using part (ii) of Lemma \ref{lem:heat} and Lemma \ref{lem:estimatec} we readily have that
% \begin{align*}
% \norm[L^q(\Om)]{\nabla c(t)}\le k_2(1+t^{-\frac12+\frac N{2q}})\norm[L^\infty(\Om)]{c_0}\le k_2c_{\Om}(1+t^{-\frac12})\norm[\Lin]{c}
% \end{align*}
% for all $t\in(0,T)$. Moreover, an estimate for $n$ yields that
% \begin{align}\label{eq:nlin}
% \norm[\Lin]{n}\le \norm[\Lin]{n-\nbar_0}+\nbar_0\le (\nbar_0+(M_1+{k_1})\eps)(1+s^{-\frac{N}{2p_0}}).
% \end{align}
Next, %we use  $q_0>N$ and let $\frac{1}{q_1}=\frac{1}{q_0}+\frac{1}{q}$ with $\frac1{q_1}\in(0,1)$ since $q\ge q_0> N\ge 2$.
using Lemma \ref{lem:heat} (ii) and H\"older's inequality, we derive that
\begin{align*}
I_6&\le \int_0^tk_2(1+(t-s)^{-\frac{1}{2}-\frac{N}{2q_0}})e^{-\lambda_1(t-s)}\|u (\cdot,s)\cdot\nabla c(\cdot,s)\|_{L^{q_0}(\Om)}ds\\\nn
&\le \int_0^tk_2(1+(t-s)^{-\frac{1}{2}-\frac{N}{2q_0}})e^{-\lambda_1(t-s)}
\|u(\cdot,s)\|_{L^{q_0}(\Om)}\|\nabla c(\cdot,s)\|_{\Lin}ds\fat.
\end{align*}
If we insert estimates from \eqref{eq:T} and employ Lemma \ref{lem:product} and \eqref{eq:C6}, we see that
\begin{align*}
I_6&\le \int_0^tk_2(1+(t-s)^{-\frac{1}{2}-\frac{N}{2q_0}})e^{-\lambda_1(t-s)}
M_3\eps (1+s^{-\frac{1}{2}+\frac{N}{2q_0}})e^{-\al_2s}
M_2\eps(1+s^{-\frac{1}{2}})
e^{-\al_1 s}ds\\\nn
&\le 3\int_0^tk_2(1+(t-s)^{-\frac{1}{2}-\frac{N}{2q_0}})e^{-\lambda_1(t-s)}M_3\eps (1+s^{-1+\frac{N}{2q_0}})M_2\eps e^{-\al_1 s}ds\\\nn
&\le 3k_2M_2M_3\eps^2C_6\left(1+t^{-\frac{1}{2}}\right)e^{-\al_1 t}
\end{align*}
for all $t\in(0,T)$.
Combining the above inequalities, we obtain
\begin{align}\label{eq:finalestimateinlemestnac}
\nn
{\|\nabla c(\cdot,t)\|_{\Lin}
}
&\le \Big(k_2+C_5k_2 (\nbar_0+(M_1+{k_1})\eps) e^{(M_1+{k_1})\sigma\eps}+3k_2M_2M_3\eps C_6\Big) \left(1+t^{
-\frac12}\right)e^{-\alpha_1 t} \eps\\
& \le \frac{M_2\eps}2\left(1+t^{-\frac{1}{2}}\right)e^{-\al_1 t}
\end{align}
holds for all $t\in(0,T)$ by \eqref{eq:M2}.
\end{proof}

Having achieved these estimates for $\nabla c$, we may re-examine the first solution component and sharpen the estimate from Lemma \ref{lem:prelimestimaten}.
\begin{lem}\label{lem:estimaten}
Under the assumptions of Proposition \ref{prop:Szero},
finally also
\begin{align*}
\|n(\cdot,t)-e^{t\Lap}n_0\|_{L^\theta(\Om)}<\frac{M_1\eps}2\left(1+t^{-\frac{N}2(\frac1{p_0}-\frac1\theta)}\right)e^{-\al t}
\end{align*}
is valid for all $t\in(0,T)$ and for all  $\theta\in[q_0,\infty]$.
\end{lem}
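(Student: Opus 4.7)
The plan is to apply the variation-of-constants representation
\[
n(\cdot,t) - e^{t\Delta}n_0 = -\intnt e^{(t-s)\Delta}\bigl(\nabla\cdot(nS(\cdot,n,c)\nabla c)\bigr)(\cdot,s)\,ds - \intnt e^{(t-s)\Delta}\bigl(u\cdot\nabla n\bigr)(\cdot,s)\,ds
\]
and estimate each integral by means of Lemma \ref{lem:heat}(iv) applied with exponent $q_0$. The key preliminary observation is that, since $\bar{n}_0$ is a constant and $\nabla\cdot u=0$, one may rewrite $u\cdot\nabla n = \nabla\cdot\bigl((n-\bar{n}_0)u\bigr)$, which together with $|S|\le C_S$ brings both integrands into the $\nabla\cdot$-form required to apply \eqref{eq:heat}(iv).

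\smallskip
For the chemotactic integral I would estimate
\[
\|nS\nabla c\|_{L^{q_0}(\Om)} \le C_S\bigl(m|\Om|^{\frac1{q_0}} + \|n-\bar n_0\|_{L^{q_0}(\Om)}\bigr)\|\nabla c\|_{L^\infty(\Om)},
\]
insert the bound from Lemma \ref{lem:prelimestimaten} (with $\theta=q_0$) and that from Lemma \ref{lem:estimatenac}, and convert both the pure factor $(1+s^{-\frac12})$ and the product $(1+s^{-\frac{N}{2}(\frac{1}{p_0}-\frac{1}{q_0})})(1+s^{-\frac12})$ into a common expression of the form $(1+s^{-\frac12-\frac{N}{2}(\frac{1}{p_0}-\frac{1}{q_0})})$ via Lemmas \ref{lem:improveintestimate} and \ref{lem:product} (noting $e^{-2\alpha_1 s}\le e^{-\alpha_1 s}$). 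For the convective integral I would use
\[
\|(n-\bar n_0)u\|_{L^{q_0}(\Om)} \le \|n-\bar n_0\|_{L^\infty(\Om)}\|u\|_{L^{q_0}(\Om)},
\]
insert the $\theta=\infty$ case of Lemma \ref{lem:prelimestimaten} and the defining estimate from \eqref{eq:T}, apply Lemma \ref{lem:product} to obtain the same time-singularity $(1+s^{-\frac12-\frac{N}{2}(\frac{1}{p_0}-\frac{1}{q_0})})$, and bound $e^{-(\alpha_1+\alpha_2)s}\le e^{-\alpha_1 s}$.

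\smallskip
Lemma \ref{lem:heat}(iv) contributes the semigroup factor
$k_4\bigl(1+(t-s)^{-\frac12-\frac{N}{2}(\frac{1}{q_0}-\frac{1}{\theta})}\bigr)e^{-\lambda_1(t-s)}$, so in both integrals the integrand fits exactly the left-hand side of \eqref{eq:C7}. Applying that inequality yields, for every $\theta\in[q_0,\infty]$,
\[
\|n(\cdot,t)-e^{t\Delta}n_0\|_{L^\theta(\Om)} \le \bigl(3C_S C_7 k_4 M_2 m|\Om|^{\frac1{q_0}} + 3C_S C_7 k_4 M_2 (M_1+k_1)\eps + 3C_7 k_4 M_3 (M_1+k_1)\eps\bigr)\eps\bigl(1+t^{-\frac{N}{2}(\frac{1}{p_0}-\frac1\theta)}\bigr)e^{-\alpha_1 t},
\]
and the inequality \eqref{eq:M1} from Lemma \ref{lem:chooseM} bounds the bracket by $\tfrac{M_1}{2}$, giving the claimed estimate.

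\smallskip
The main obstacle is the bookkeeping needed to ensure that the chosen H\"older splittings, together with Lemmas \ref{lem:product} and \ref{lem:improveintestimate}, produce exactly the singularity $s^{-\frac12-\frac N2(\frac1{p_0}-\frac1{q_0})}$ in the integrand, so that the previously prepared integral estimate \eqref{eq:C7} becomes applicable uniformly for all $\theta\in[q_0,\infty]$ and so that the resulting coefficient matches term-for-term with the left-hand side of \eqref{eq:M1}; the strict inequality then follows because $\eqref{eq:M1}$ is a non-strict bound on a strict-inequality-like sum (each $\eps$-factor is strictly positive).
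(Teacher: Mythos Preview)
Your proposal is correct and follows essentially the same approach as the paper: split via the Duhamel formula into the chemotactic and convective integrals, rewrite $u\cdot\nabla n=\nabla\cdot((n-\bar n_0)u)$, apply Lemma~\ref{lem:heat}(iv) with exponent $q_0$ to both, feed in the $L^{q_0}$-bound on $n-\bar n_0$ from Lemma~\ref{lem:prelimestimaten}, the $L^\infty$-bound on $\nabla c$, and the $L^{q_0}$-bound on $u$ from \eqref{eq:T}, collapse the time singularities via Lemma~\ref{lem:product} into $(1+s^{-\frac12-\frac N2(\frac1{p_0}-\frac1{q_0})})$, and then invoke \eqref{eq:C7} and \eqref{eq:M1}. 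The only cosmetic difference is that the paper absorbs the constant $m|\Om|^{1/q_0}$ into the factor $(1+s^{-\frac N2(\frac1{p_0}-\frac1{q_0})})$ before applying Lemma~\ref{lem:product}, whereas you treat it separately via Lemma~\ref{lem:improveintestimate}; both routes land on the same coefficient.
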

\begin{proof}
Let $\theta\in[q_0,\infty]$ and $t\in(0,T)$. Then
\begin{align}\nn
\|n(\cdot,t)-e^{t\Delta}n_0\|_{\Lt}&\le\int_0^t\|e^{(t-s)\Delta}\nabla\cdot({nS(\cdot,n,c){\cdot}\nabla c})(\cdot,s)\|_{\Lt}ds+\int_0^t\|e^{(t-s)\Delta}u(\cdot,s)\cdot\nabla n(\cdot,s)\|_{\Lt}ds\\
&=:I_7+I_8\nn
\end{align}
and according to Lemma \ref{lem:heat}(iv) we have
%First we fix $\frac{1}{a}\in(\frac{1}{p_0}-\frac{1}{n},\frac{1}{n})$ thanks to $p_0>\frac{n}{2}$, then we let $\frac{1}{\theta_1}=\frac{1}{\theta}+\frac{1}{a}$,
\begin{align}\nn
I_7&\le \int_0^tk_4(1+(t-s)^{-\frac{1}{2}-\frac{N}{2}(\frac{1}{q_0}-\frac{1}{\theta})})e^{-\lambda_1(t-s)}
\|{(n S(\cdot,n,c){\cdot}\nabla c)(\cdot,s)}\|_{L^{q_0}(\Om)}ds\\\nn
&\le C_S\int_0^tk_4(1+(t-s)^{-\frac{1}{2}-\frac{N}{2}(\frac{1}{q_0}-\frac{1}{\theta})})e^{-\lambda_1(t-s)}
\|n(\cdot,s)\|_{L^{q_0}(\Om)}\|\nabla c(\cdot,s)\|_{\Lin}ds.
\end{align}
Here we can employ the estimates provided by \eqref{eq:n_Ltheta}, \eqref{eq:T} and Lemma \ref{lem:product} to gain
\begin{align}
\nn I_7 &\le C_S\int_0^tk_4(1+(t-s)^{-\frac{1}{2}-\frac{N}{2}(\frac{1}{q_0}-\frac{1}{\theta})})e^{-\lambda_1(t-s)}
(\nbar_0|\Om|^{\frac{1}{q_0}}+(M_1+{k_1})\eps)(1+s^{-\frac{N}{2}(\frac1{p_0}-\frac{1}{q_0})})
M_2\eps(1+s^{-\frac{1}{2}})e^{-\al_1 s}ds\\\nn
&\le 3C_Sk_4M_2\left({\nbar_0|\Om|^{\frac{1}{q_0}}}+(M_1+
{k_1})\eps\right)\eps\int_0^t(1+(t-s)^{-\frac{1}{2}-\frac{N}{2}(\frac{1}{q_0}-\frac{1}{\theta})})e^{-\lambda_1(t-s)}(1+s^{-\frac12-\frac N2(\frac1{p_0}-\frac1{q_0})})e^{-\al_1 s}ds\\\nn
&\le 3C_SC_7k_4M_2\left({m|\Om|^{\frac{1}{q_0}}}+(M_1+{k_1})\eps\right)\ep
\left(1+t^{-\frac{N}{2}(\frac{1}{p_0}-\frac{1}{\theta})}\right)e^{-\al_1 t}.
\end{align}
As $\nbar_0$ is constant and $\na\cdot u=0$,
\[
 I_8=\intnt \norm[\Lt]{e^{(t-s)\Lap} {\left(u\cdot\na(n-\nbar_0)\right)(\cdot,s)}}ds=\intnt\norm[\Lt]{e^{(t-s)\Lap}\na\cdot{((n-\nbar_0)u)(\cdot,s)}}ds
\]
and hence, treating this integral similarly as $I_7$ before, we obtain
%By the way, this is, where we use Lemma 2.1 (iv) for $\theta=\infty$, that is: in the strengthened version.
\begin{align}\nn
I_8&\le\int_0^tk_4(1+(t-s)^{-\frac{1}{2}-\frac{N}{2}(\frac{1}{q_0}-\frac{1}{\theta})})
e^{-\lambda_1(t-s)}\|(n(\cdot,s)-\bar{n}_0)u(\cdot,s)\|_{L^{q_0}(\Om)}ds\\\nn
&\le \int_0^tk_4(1+(t-s)^{-\frac{1}{2}-\frac{N}{2}(\frac{1}{q_0}-\frac{1}{\theta})})
e^{-\lambda_1(t-s)}\|n(\cdot,s)-\bar{n}_0\|_{\Lin}\|u(\cdot,s)\|_{L^{q_0}(\Om)}ds\\\nn
&\le
\int_0^tk_4(1+(t-s)^{-\frac{1}{2}-\frac{N}{2}(\frac{1}{q_0}-\frac{1}{\theta})})
e^{-\lambda_1(t-s)}(M_1+k_1)\eps(1+s^{-\frac{N}{2p_0}})e^{-\al_1 s}
M_3\eps (1+s^{-\frac{1}{2}+\frac{N}{2q_0}})e^{-\al_2s}ds
\\\nn
&\le 3(M_1+{k_1})k_4M_3\eps^2\int_0^t(1+(t-s)^{-\frac{1}{2}-\frac{N}{2}
(\frac{1}{q_0}-\frac{1}{\theta})})
e^{-\lambda_1(t-s)}
(1+s^{-\frac{1}{2}-\frac{N}{2p_0}+\frac{N}{2q_0}})e^{-\al_1 s}ds\\\nn
&\le 3(M_1+{k_1})C_7k_4M_3\eps^2\left(1+t^{-\frac{N}{2}(\frac{1}{p_0}-\frac{1}{\theta})}\right)e^{-\al_1 t}.
\end{align}
Using the choice of $\eps$ and \eqref{eq:M1} we arrive at
\begin{align*}\nn
&\|n(\cdot,t)-e^{t\Delta}n_0\|_{\Lt}\\
&\le \left(3C_SC_7k_4M_2m|\Om|^{\frac{1}{q_0}}+3C_SC_7k_4M_2(M_1+
{k_1})\eps+3(M_1+{k_1})C_7k_4M_3\eps\right)\eps\left(1+t^{-\frac{N}{2}(\frac{1}{p_0}-\frac{1}{\theta})}\right)e^{-\al_1 t}\\\nn
&\le \frac{M_1\eps}2\left(1+t^{-\frac{N}{2}(\frac{1}{p_0}-\frac{1}{\theta})}\right)e^{-\al_1 t}
\end{align*}
for all $t\in(0,T)$.
\end{proof}
%In the next lemma, we not only show the boundedness of $u$, but also provide its decay property with {\em optimal} rate.
While we have obtained some estimates for $u$, one for $\norm[L^2]{A^\beta u{(\cdot,t)}}$ is not yet among them, although this is the quantity featured by the extensibility criterion in Lemma \ref{lem:locexistence}. We rectify this in the next lemma:

\begin{lem}\label{lem:bounduDbeta}
Given $N$, $p_0$, $q_0$, $q_1$, $\beta$, $C_S$, $\Phi$, $m$, $\alpha_1$, $\alpha_2$, $\eps$ as in the statement of Proposition \ref{prop:Szero}, it is possible to find $C_8>0$ with the property asserted there. In particular, for any $t\in(0,T)$, we have
\begin{align}\label{eq:u_Abeta}
 \norm[L^2(\Om)]{A^\beta u(\cdot,t)}\leq { C_8 } e^{-\alpha_2 t}.
\end{align}
\end{lem}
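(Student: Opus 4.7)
The plan is to apply $A^\beta$ to the variation-of-constants representation \eqref{eq:varofconst_u} of $u$. Using $\calP\na\Phi = 0$ exactly as in the proof of Lemma~\ref{lem:estimateu}, one rewrites
\[
A^\beta u(\cdot,t) = e^{-tA}A^\beta u_0 + \intnt A^\beta e^{-(t-s)A}\calP\bigl((n-\nbar_0)\na\Phi - (u\cdot\na)u\bigr)(\cdot,s)\,ds.
\]
Here the semigroup term is immediately controlled in $L^2(\Om)$ by Lemma~\ref{lem:stokes}(i) applied with $\gamma = 0$, yielding $\norm[L^2(\Om)]{e^{-tA}A^\beta u_0} \leq k_6(2,0)\,e^{-\mu t}\norm[L^2(\Om)]{A^\beta u_0}$, which, since $\mu > \alpha_2$, already decays at the desired rate with a constant depending on $\norm[L^2(\Om)]{A^\beta u_0}$.

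For the integral I would use the composite estimate
\[
\norm[L^2(\Om)]{A^\beta e^{-\tau A}\calP \phi}
\leq C\,\tau^{-\beta - \frac{N}{2}\bigl(\frac{1}{r}-\frac{1}{2}\bigr)_+}e^{-\mu \tau}\norm[L^r(\Om)]{\phi}
\qquad(r\in(1,\infty)),
\]
obtained by splitting $e^{-\tau A}$ into two half-length factors and combining parts (i) and (ii) of Lemma~\ref{lem:stokes} with Lemma~\ref{lem:helmholtzbounded} (using $L^r(\Om)\embeddedinto L^2(\Om)$ in the case $r\ge 2$). The linear contribution $(n-\nbar_0)\na\Phi$ is controlled in $L^2(\Om)$ by Hölder's inequality and Lemma~\ref{lem:prelimestimaten}; for the convective term I would use $\norm[L^r(\Om)]{(u\cdot\na)u}\le\norm[L^{q_0}(\Om)]{u}\norm[L^N(\Om)]{\na u}$ with $\frac{1}{r}=\frac{1}{q_0}+\frac{1}{N}$, inserting the bounds furnished by Definition~\ref{def:T}. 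Lemma~\ref{lem:product} collapses the resulting product of temporal singularities and Lemma~\ref{lem:integralestimates} performs the time convolutions, producing the rate $e^{-\alpha_2 t}$ in view of $\alpha_2<\min\{\mu,\alpha_1\}$.

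The principal obstacle is that with these choices the combined integrand carries a power-singularity of the form $(t-s)^{-a}s^{-b}$ with $a+b>1$, whence Lemma~\ref{lem:integralestimates} only delivers a factor $(1+t^{1-a-b})$ that is unbounded as $t\downarrow 0$; this merely reflects the fact that the bounds in Definition~\ref{def:T} overestimate the true behaviour of $u$ and $\na u$ near $s=0$, since $u(\cdot,0)=u_0\in D(A^\beta)\embeddedinto L^{q_0}(\Om)$ via Lemma~\ref{lem:stokes}(iv). To patch this I would pick the $\tau>0$ provided by the local existence result Lemma~\ref{lem:locexistence}, which guarantees $\norm[L^2(\Om)]{A^\beta u(\cdot,t)}\le\Gamma$ on $[0,\tau]$; on this interval the claim is immediate with $C_8\geq \Gamma e^{\alpha_2\tau}$. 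On $[\tau,T)$ the above integral estimate applies, and the offending factor $(1+t^{1-a-b})$ is uniformly bounded by $1+\tau^{1-a-b}$. Taking $C_8$ as the maximum of the two resulting constants delivers \eqref{eq:u_Abeta} for every $t\in(0,T)$.
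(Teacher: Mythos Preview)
Your approach contains a genuine gap that the short-time patching does not repair. With the H\"older pairing $\frac1r=\frac1{q_0}+\frac1N$, the composite estimate you write down carries the singularity $(t-s)^{-a}$ with
\[
a=\beta+\tfrac N2\Big(\tfrac1{q_0}+\tfrac1N-\tfrac12\Big)_+,
\]
and for $q_0$ close to $N$ together with $\beta$ close to $1$ (both admissible in Proposition~\ref{prop:Szero}) this exceeds $1$: for instance $N=3$, $q_0=4$, $\beta=\tfrac9{10}$ give $a=\tfrac{41}{40}>1$. In that range the time convolution is not even locally integrable near $s=t$, so Lemma~\ref{lem:integralestimates} is inapplicable and no factor $(1+t^{1-a-b})$ is produced at all. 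Your patch only splits the \emph{evaluation time} $t$ into $[0,\tau]$ and $[\tau,T)$; this handles a blow-up at $s=0$, but it cannot cure a divergence located at $s=t$.

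The paper avoids this by a different mechanism: it estimates the convective term directly in $L^2(\Om)$ via
\[
\norm[L^2(\Om)]{(u\cdot\na)u}\leq |\Om|^{\frac12-\frac1N}\norm[\Lin]{u}\,\norm[L^N(\Om)]{\na u},
\]
and controls $\norm[\Lin]{u}$ through a Gagliardo--Nirenberg interpolation $\norm[\Lin]{u}\leq c\,\norm[L^2(\Om)]{A^\beta u}^{b}\norm[L^{q_0}(\Om)]{u}^{1-b}$ for some $b\in(0,1)$, using Lemma~\ref{lem:stokes}(iv). This keeps the exponent at $(t-s)$ equal to $\beta<1$, at the price of making the resulting inequality \emph{nonlinear} in $M(t):=e^{\alpha_2 t}\norm[L^2(\Om)]{A^\beta u(\cdot,t)}$. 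After splitting the $s$-integral at $t_0/2$ (local existence bounds on $[0,t_0/2]$, decay bounds from Definition~\ref{def:T} on $[t_0/2,t]$), one arrives at $M(t)\leq c_8+c_8\sup_{(0,T)}M^{b}$; since $b<1$, this yields the uniform bound $C_8$.
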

\begin{proof}
We first define $M(t):=e^{\al_2 t}\|A^\beta u(\cdot,t)\|_{L^2(\Om)}$ for $t\in(0,T)$.
Moreover, let us pick $r>N$ such that
\[
 \frac1{q_0}+\frac1N>\frac1r\geq\frac1N+\frac12-\frac{2\beta}N,
\]
which is evidently possible due to $\frac{2\beta}N>\frac2N\cdot\frac N4=\frac12$. If we set $b:=\frac1{q_0}/(\frac1{q_0}+\frac1N-\frac1r)$, we have $b\in(0,1)$ and the Gagliardo-Nirenberg inequality and Lemma \ref{lem:stokes}(iv) provide us with $c_1>0$
and $c_2=k_9(\beta,r,2)c_1$
 such that
\[
 \norm[L^\infty(\Om)]{\Phii}\leq c_1\norm[W^{1,r}(\Om)]{\Phii}^b\norm[L^{q_0}(\Om)]{\Phii}^{1-b} \leq c_2\norm[L^2(\Om)]{A^\beta\Phii}^b\norm[L^{q_0}(\Om)]{\Phii}^{1-b} \qquad \mbox{ for all } \Phii\in L^{q_0}(\Om)\cap W^{1,r}(\Om) \cap L^2_\sigma(\Om).
\]
In particular,
\begin{align}\label{eq:estunaul2}
 \norm[L^2(\Om)]{{(u\cdot\na) u (\cdot,s)}} \leq& \norm[L^\infty(\Om)]{u(\cdot,s)} |\Om|^{\frac12-\frac1N} \norm[L^N(\Om)]{\na u(\cdot,s)}\nn \\
 \leq& c_2|\Om|^{\frac12-\frac1N} \norm[L^2(\Om)]{A^\beta u(\cdot,s)}^b \norm[L^{q_0}(\Om)]{u(\cdot,s)}^{1-b}\norm[L^N(\Om)]{\na u(\cdot,s)}, \qquad s\in(0,T).
% \leq& C|\Om|^{\frac12-\frac1N}(e^{-\alpha s} M(s))^b (M_3\eps(1+s^{-\frac12+\frac N{2q_0}}e^{-\al_2s})^{1-b}
\end{align}
We set
\begin{align*}
 t_0:=\tau(q_0,\beta,\eps,\eps,\eps,C_S), \qquad
 \Gamma:=\Gamma(q_0,\beta,\eps,\eps,\eps,C_S)
\end{align*}
as provided by Lemma \ref{lem:locexistence} and choose $c_3>0$ such that $\norm[L^{q_0}(\Om)]{\Phii}\leq c_3\norm[L^2(\Om)]{A^\beta \Phii}$ for all $\Phii\in D(A^\beta)$.
If we use that $\norm[W^{1,N}{(\Om)}]{u}\leq k_9(\beta,N,2) \norm[L^2(\Om)]{A^\beta u}$ according to Lemma \ref{lem:stokes}(iv), \eqref{eq:estunaul2} then shows that
\begin{equation} \label{eq:est_unaushort}
\norm[L^2(\Om)]{{(u\cdot\na) u (\cdot,s)}}\leq
c_2|\Om|^{\frac12-\frac1N} \Gamma^b c_3\Gamma^{1-b} \norm[L^N(\Om)]{\na u(\cdot,s)}
%c_2c_3^{1-b}|\Om|^{\frac12-\frac1N}\Gamma \norm[L^N(\Om)]{\na u}
\leq c_2|\Om|^{\frac12-\frac1N}\Gamma^2 k_9(\beta,N,2)=:c_4
\end{equation}
for  $s\in(0,t_0)$, and that
\begin{align}\label{eq:est_unaulong}
 \norm[L^2(\Om)]{{(u\cdot\na) u (\cdot,s)}}\leq& c_2|\Om|^{\frac12-\frac1N} e^{-\al_2 bs}e^{-\al_2 (1-b)s} M^{b}(s)\left(M_3\eps(1+\left(\nicefrac{t_0}{2}\right)^{-\frac12+\frac N{2q_0}})\right)\left(M_4\eps(1+(\nicefrac{t_0}{2})^{-\frac12})\right)\nn\\
 =&c_5 e^{-\al_2 s} M^{b}(s) %We could have an additional e^{-\al_2 s} here, if I'm not mistaken
\end{align}
for all $s\in(\frac{t_0}2,T)$ for an obvious choice of $c_5>0$.
For $t>t_0$ we now aim at estimating
\begin{align}\nn
 \norm[L^2(\Om)]{A^\beta u (\cdot,t)} &\leq \norm[L^2(\Om)]{A^\beta e^{-tA} u_0} +\intnt \norm[L^2(\Om)]{A^\beta e^{-(t-s)A} \P (n(\cdot,s)-\nbar_0)\na\Phi} \\\label{eq:estimateAbetau}
 &~~~~~~~~~~~~~~~~~~~+\intnt \norm[L^2(\Om)]{A^\beta e^{-(t-s)A} \P{(u\cd\na u)}(\cdot,s)} ds
\end{align}
and observe that
\begin{equation}\label{eq:abetau0}
 \norm[L^2(\Om)]{A^\beta e^{-tA} u_0}\leq k_6(2,\beta) t^{-\beta}e^{-\mu t}\norm[L^2(\Om)]{u_0}\leq k_6(2,\beta) t_0^{-\beta}|\Om|^{\frac{N-2}{2N}}e^{-\alpha_2t}\norm[L^N(\Om)]{u_0}
\end{equation}
for {$t\in[t_0,T)$}.\\
Since $\beta\in(0,1)$ and $-\frac{N}{2}(\frac1{p_0}-\frac1{q_0})\in(-1,0)$ and $1-\beta-\frac N2(\frac1{p_0}-\frac1{q_0})>-1$, Lemma \ref{lem:integralestimates} and Lemma \ref{lem:improveintestimate} provide $c_6>0$ such that for all $t>0$

\begin{align}\label{eq:C42}
 \intnt (t-s)^{-\beta}e^{-\mu (t-s)}(1+s^{-\frac{N}{2}(\frac1{p_0}-\frac1{q_0})}) e^{-\alpha_1 s} ds \leq c_6\left(1+ t^{-1}\right)e^{-\alpha_1 t}.
\end{align}

From Lemma \ref{lem:stokes}(i), Lemma \ref{lem:helmholtzbounded}, Lemma \ref{lem:integralestimates} and \eqref{eq:T}, we infer
\begin{align}\label{eq:est_secondint}
 &\intnt\norm[L^2(\Om)]{A^\beta e^{-(t-s)A} \P (n(\cdot,s)-\nbar_0)\na\Phi}\nn\\
 &\leq k_6(2,\beta)k_5(2) \intnt e^{-{\mu}(t-s)} (t-s)^{-\beta} \norm[L^2(\Om)]{n(\cdot,s)-\nbar_0}\norm[L^\infty(\Om)]{\na\Phi}ds \nn\\
  &\leq k_6(2,\beta)k_5(2) \intnt e^{-{\mu}(t-s)} (t-s)^{-\beta} {|\Om|^{\frac12-\frac1{q_0}}} \norm[L^{q_0}(\Om)]{n(\cdot,s)-\nbar_0}\norm[L^\infty(\Om)]{\na\Phi}ds\nn\\
&\le k_6(2,\beta)k_5(2)\norm[\Lin]{\na\Phi}{|\Om|^{\frac12-\frac1{q_0}}} \intnt e^{-{\mu}(t-s)} (t-s)^{-\beta}
(M_1+{k_1})\ep(1+s^{-\frac N2(\frac1{p_0}-\frac1{q_0})})e^{-{\al_1} s}ds\nn\\
&\le k_6(2,\beta)k_5(2)\norm[\Lin]{\na\Phi}(M_1+{k_1})c_6{|\Om|^{\frac12-\frac1{q_0}}} \ep (1+t^{-1})e^{-{\al_1} t}\nn\\
&\le {k_6(2,\beta)k_5(2)\norm[\Lin]{\na\Phi}(M_1+{k_1})c_6{|\Om|^{\frac12-\frac1{q_0}}} \ep (1+t_0^{-1})e^{-\al_1 t}\qquad \mbox{for all } t\in[t_0,T).}
\end{align}

Moreover, from $-\beta\in(-1,0)$%, ${-\frac12}\in(-1,0)$
and $0\geq \min\set{0,1-\beta{-\frac12}}{>}-1$, by means of Lemma \ref{lem:integralestimates} and Lemma \ref{lem:improveintestimate} we conclude the existence of $c_7>0$ such that
{\begin{equation}\label{eq:C51}
 \intnt (t-s)^{-\beta}e^{-\mu(t-s)}e^{-\alpha_1 s} ds \leq c_7 (1+t^{-1})e^{-\alpha_1 t}
\end{equation}}
holds for any $t>0$.
Furthermore, for any {$t\in[t_0,T)$} we have
\begin{align*}
 \intnt \norm[L^2(\Om)]{A^\beta e^{-(t-s)A} {(\P(u\cdot\na)u)(\cdot,s)}} ds&\leq k_6(2,\beta)k_5(2) \int_0^{\frac{t_0}2} (t-s)^{-\beta} e^{-\mu(t-s)} \norm[L^2(\Om)]{{(u\cd\na u)}(\cdot,s)} ds\\
 &\;\;+ k_6(2,\beta)k_5(2)\int_{\frac{t_0}2}^t (t-s)^{-\beta} e^{-\mu(t-s)} \norm[L^2(\Om)]{{(u\cd\na u)}(\cdot,s)} ds,
\end{align*}
where we can use \eqref{eq:est_unaushort} to estimate the first summand by
\begin{align}\label{eq:est_intshorttime}
 \int_0^{\frac{t_0}2} (t-s)^{-\beta} e^{-\mu(t-s)} \norm[L^2(\Om)]{{(u\cd\na u)}(\cdot,s)} ds\leq& \int_0^{\frac{t_0}2} \left(\nicefrac{t_0}2\right)^{-\beta} e^{-\mu t} e^{\mu s} c_4 ds \leq c_4 \left(\nicefrac{t_0}2\right)^{-\beta} (e^{\nicefrac{\mu t_0}{2}} -1)e^{-\alpha_2 t},
\end{align}
whereas the integral concerned with larger times by \eqref{eq:est_unaulong} can be controlled according to
\begin{align}\label{eq:est_intlargetime}
 \int_{\frac{t_0}2}^t (t-s)^{-\beta} e^{-\mu(t-s)} \norm[L^2(\Om)]{{(u\cd\na u)}(\cdot,s)} ds \nn
 &\leq \int_{\frac{t_0}2}^t (t-s)^{-\beta} e^{-\mu(t-s)} c_5 e^{-\al_2 s} {M^b(s)} ds \nn\\
  &\leq  c_5\supszerot {M^b(s)} \int_0^t (t-s)^{-\beta} e^{-\mu(t-s)} e^{-\al_2 s} ds\nn\\ &\leq c_5c_7 e^{-\alpha_2t} \supszerot {M^b(s)}
\end{align}
for all {$t\in[t_0,T)$}, due to \eqref{eq:C51}.
As to {$t\in(0,t_0)$}, we know from Lemma \ref{lem:locexistence} that
\begin{equation} \label{eq:est_shorttime}
 \norm[L^2(\Om)]{A^\beta u(\cdot,t)} \leq \Gamma \leq \Gamma e^{\al_2 t_0}e^{-\al_2 t} \mbox{ for all }t\in(0,t_0).
\end{equation}
If we then insert \eqref{eq:abetau0}, \eqref{eq:est_secondint}, \eqref{eq:est_intshorttime} and \eqref{eq:est_intlargetime} into \eqref{eq:estimateAbetau} and
take into account \eqref{eq:est_shorttime}, we obtain some $c_8>0$ such that for all $t\in(0,T)$
\[
 \norm[L^2(\Om)]{A^\beta u} \leq c_8e^{-\al_2t} + c_8 e^{ { -\al_2 } t} {\suptzeroT M^b(t)},
\]
where multiplication by $e^{\alpha_2 t}$ shows that
\[
 M(t) \leq c_8 + c_8{\suptzeroT M^b(t)} \qquad \mbox{for all } t\in(0,T)
\]
Due to $b<1$, we may hence infer the existence of $C_8>0$ such that
\[
 C_8 \geq M(t) = e^{\al_2 t} \norm[L^2(\Om)]{A^\beta u(\cdot,t)} \qquad \mbox{for all }t\in(0,T)
\]
This entails \eqref{eq:u_Abeta}.
\end{proof}

In order to infer the decay asserted in Proposition \ref{prop:Szero}, we have to combine the estimates from Definition \ref{def:1} with Lemma \ref{lem:locexistence}.
\begin{lem}\label{converge}
Given $N$, $p_0$, $q_0$, $q_1$, $\beta$, $C_S$, $\Phi$, $m$, $\alpha_1$, $\alpha_2$, $\eps$ as in the statement of Proposition \ref{prop:Szero}, it is possible to find there are $C_{9}>0$,  $C_{10}>0$ and $C_{11}>0$ with the properties asserted there. In particular,
\begin{align}
\label{ul2}&\|u(\cdot,t)\|_{L^\infty(\Om)}\le  { C_{9} } e^{-\al_2 t},\\
\label{nlinf}&\|n(\cdot,t)-\bar{n}_0\|_{L^\infty(\Om)}\le
 C_{10}  e^{-\al_1 t} \\
\quad\mbox{and}\quad&\|c(\cdot,t)\|_{W^{1,q_1}(\Om)}\le C_{11} e^{-\al_1 t}
\end{align}
for all $t\in(0,T)$.
\end{lem}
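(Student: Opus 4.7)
The plan is to establish the three bounds successively by coupling the estimates of Definition \ref{def:T} and Lemma \ref{lem:bounduDbeta} with the short-time information provided by Lemma \ref{lem:locexistence}.

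For \eqref{ul2}, since $\beta>\tfrac{N}{4}$ one can pick $p>N$ with $2\beta-\tfrac{N}{2}\ge 1-\tfrac{N}{p}$, so that Lemma \ref{lem:stokes}(iv) gives $D(A^\beta_2)\embeddedinto W^{1,p}(\Om)$ and the usual Sobolev embedding $W^{1,p}(\Om)\embeddedinto L^\infty(\Om)$ produces a constant $c_1>0$ with $\|u(\cdot,t)\|_{L^\infty(\Om)}\le c_1\|A^\beta u(\cdot,t)\|_{L^2(\Om)}$ for every $t\in(0,T)$; then \eqref{ul2} is immediate from Lemma \ref{lem:bounduDbeta} with $C_9:=c_1C_8$.

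For \eqref{nlinf} I take $\theta=\infty$ in Lemma \ref{lem:estimaten} and use Lemma \ref{lem:heat}(i) with $q=p_0$, $p=\infty$ to bound $\|e^{t\Lap}(n_0-\nbar_0)\|_{L^\infty(\Om)}\le k_1(1+t^{-\frac{N}{2p_0}})e^{-\lambda_1 t}\eps$; adding the two contributions yields $\|n(\cdot,t)-\nbar_0\|_{L^\infty(\Om)}\le (M_1+k_1)\eps(1+t^{-\frac{N}{2p_0}})e^{-\alpha_1 t}$ on $(0,T)$. Fixing any $t_\star\in(0,T)$, on $[t_\star,T)$ this is already a constant multiple of $e^{-\alpha_1 t}$, while on $(0,t_\star]$ Lemma \ref{lem:locexistence}(i) delivers a constant $\Gamma$ with $\|n(\cdot,t)\|_{L^\infty(\Om)}\le\Gamma$, so that enlarging the prefactor by $e^{\alpha_1 t_\star}$ absorbs the short-time piece. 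The $L^{q_1}$-bound on $c$ follows trivially from $\|c\|_{L^{q_1}(\Om)}\le|\Om|^{\frac{1}{q_1}}\|c\|_{L^\infty(\Om)}$ together with Lemma \ref{lem:estimatec}, and the same short-time/long-time split takes care of $\|\na c\|_{L^{q_1}(\Om)}$: on $[t_\star,T)$ one uses $\|\na c\|_{L^{q_1}(\Om)}\le|\Om|^{\frac{1}{q_1}}\|\na c\|_{L^\infty(\Om)}$ with Lemma \ref{lem:estimatenac}, whereas on $(0,t_\star]$ one reapplies Lemma \ref{lem:locexistence}(i) at the regularity level $q=q_1$ (admissible because $c_0\in W^{1,q_1}(\Om)$ and $q_1\ge q_0>N$); uniqueness (Lemma \ref{lem:locexistence}(ii)) ensures the resulting local solution is the global one under consideration, supplying a uniform $W^{1,q_1}$-bound on $[0,t_\star]$.

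The principal obstacle is the reconciliation of the singularities at $t=0$ intrinsic to the time-weighted bounds of Definition \ref{def:T} with the singularity-free conclusion the lemma demands. Since $p_0\in(\tfrac{N}{2},N)$ forces $\tfrac{N}{2p_0}>\tfrac12$, one cannot extend the $n$-estimate down to $t=0$ by a direct semigroup argument, and the short-time input has to be imported from the local existence theory; the hypothesis $c_0\in W^{1,q_1}(\Om)$ explicitly required in Proposition \ref{prop:Szero} serves precisely to enable the analogous short-time step for $c$.
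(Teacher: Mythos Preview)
Your argument is correct and mirrors the paper's: the embedding $D(A^\beta)\hookrightarrow L^\infty(\Om)$ together with Lemma \ref{lem:bounduDbeta} for $u$, and the short-time/long-time split via Lemma \ref{lem:locexistence}(i) for $n$ and $c$ (invoked at regularity level $q=q_1$ for the latter), are exactly the steps the paper takes. One minor wording issue: ``any $t_\star\in(0,T)$'' should read ``some $t_\star\le\tau$'' with $\tau$ the local-existence time from Lemma \ref{lem:locexistence}, since that lemma only furnishes uniform bounds on $[0,\tau]$; the paper simply sets $t_\star=\tau$.
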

\begin{proof}
% Let us set
%\begin{equation}\label{eq:K1}
% K_1:= 2k_6(2,\beta)|\Om|^{\frac{N-2}{2N}}
% +k_6(2,\beta)k_5(2)\norm[\infty]{\na \Phi}c_6
% k_7(\frac{N}{1+\frac{N}{q_0}},2)k_5({\scriptstyle\frac{N}{1+\frac{N}{q_0}}}) M_3M_4 \eps c_5
%\end{equation}
%and set
% With these let us define
% \begin{align}
%  %\label{eq:C10}
% % C_{10}:=\max\set{K_1\eps(1+t_0^{-1}),\Gamma e^{\alpha t_0}}\\
%  \label{eq:C10}
%  C_{10}:=\max\set{(\Gamma+m)e^{\alpha t_0}, (M_1+{k_1})(1+t_0^{-\frac N{2p_0}})}\\
%  \label{eq:C11}
%  C_{11}:=\max\set{\Gamma e^{\alpha t_0}, \frac{\eps M_2}2(1+t_0^{-\frac12+\frac{N}{2q_0}})+|\Om|^{\frac1{q_0}} e^{(M_1+{k_1})\sigma\eps}\eps}
% \end{align}
% With $C_{12}>0$ denoting the embedding constant $D(A^\beta)\embeddedinto L^\infty(\Om)$,i.e.
% \begin{align}\label{embedding}
% \norm[\Lin]{\varphi}\le C_{12}\norm[D(A^\beta)]{\varphi},\text{ for all }\varphi\in D(A^\beta).
% \end{align}
Since $D(A^\beta)\embeddedinto L^\infty(\Om)$ with $\beta\in(\frac N4,1)$, we can conclude the existence of
$C_{9}>0$ such that \eqref{ul2} holds from Lemma \ref{lem:bounduDbeta}.
If we set \begin{align*}
 t_0:=\tau(q_1,\beta,\eps,\eps,\eps,C_S), \qquad
 \Gamma:=\Gamma(q_1,\beta,\eps,\eps,\eps,C_S)
\end{align*}
as provided by Lemma \ref{lem:locexistence}, we see that Lemma \ref{lem:locexistence} ensures $\norm[L^\infty(\Om)]{n(\cdot,t)}\leq \Gamma$ on $[0,t_0)$, and thus
\[
 \norm[\Lin]{n(\cdot,t)-\nbar_0}\leq \norm[\Lin]{n(\cdot,t)}+\norm[\Lin]{\nbar_0} \leq \Gamma+ m \qquad \mbox{ for }t\in(0,t_0),
\]
that is
\[
 \norm[\Lin]{n(\cdot,t)-\nbar_0}\leq (\Gamma+m)e^{\alpha t_0} e^{-\alpha_1 t} \qquad \mbox{ for }t\in[0,t_0).
\]
At the same time, Lemma \ref{lem:prelimestimaten} asserts that
\[
 \norm[\Lin]{n(\cdot,t)-\nbar_0}\leq (M_1+{k_1})\Big(1+t^{-\frac N{2p_0}}\Big)e^{-\alpha_1 t}\leq (M_1+k_1)\Big(1+t_0^{-\frac N{2p_0}}\Big)e^{-\alpha_1 t}, \qquad \mbox{ for }t\in(t_0,T)
\]
so that with ${ C_{10} }=\max\left\{(\Gamma+m)e^{\alpha t_0}, (M_1+
k_1)\Big(1+t_0^{-\frac N{2p_0}}\Big)\right\}$, we have
\[
 \norm[\Lin]{n(\cdot,t)-\nbar_0}\leq C_{11}e^{-\alpha t} \qquad \mbox{ for all }t>0.
\]
Lemma \ref{lem:locexistence} also guarantees that $\norm[W^{1,q_1}(\Om)]{c(\cdot,t)}\leq \Gamma$, and hence $\norm[W^{1,q_1}(\Om)]{c(\cdot,t)}\leq \Gamma e^{\alpha_1 t_0} e^{-\alpha_1 t}$ for all $t\in [0,t_0)$. Combining this with Lemma \ref{lem:estimatenac} and Lemma \ref{lem:estimatec}, which show that
\[
 { \norm[\Lin]{\na c(\cdot,t)}\leq \frac{\eps M_2}2\left(1+t^{-\frac12}\right)e^{-\alpha_1 t} }, \quad \norm[\Lin]{c(\cdot,t)}\leq e^{(M_1+
 {k_1})\sigma \eps} \eps e^{-\alpha_1 t}
\]
for all $t>0$,
we can infer that
\[
 \norm[W^{1,q_1}(\Om)]{c(\cdot,t)}\leq C_{11} e^{-\alpha_{1} t}, \mbox{ for all }t>0.
\]
where $C_{11} =\max\left\{\Gamma e^{\alpha t_0},\eps M_2|\Om|^{\frac1{q_1}}\left(1+t_0^{-\frac12}\right),2|\Om|^{\frac1{q_1}} e^{(M_1+{k_1})\sigma\eps}\eps\right\}$.
\end{proof}

Now we are ready to complete the proof Proposition \ref{prop:Szero}.

\begin{proof}[Proof of Proposition \ref{prop:Szero}]
 First we claim that the solution is global. In order to show this,
 {we} observe that if $\Tmax<\infty$, then according to the blow-up criterion in \eqref{eq:blowupcrit}, the inequalities required in the definition \eqref{eq:T} of $T$, and Lemma \ref{lem:bounduDbeta}, we have $T<\Tmax$ and one of the following holds:
% it is sufficient to prove $T=\infty$, because, according to the blow-up criterion in \eqref{eq:blowupcrit} and the definition of $T$ in \eqref{eq:T} and Lemma \ref{lem:bounduDbeta}, its maximal existence time satisfies $\Tmax>T$. Indeed, $T=\infty$.\\
% Suppose on contrary that $T<\infty$. Then by \eqref{eq:T} one of the following holds:
 \begin{align*}
&\|n(\cdot,T)- e^{T\Delta} n_0\|_{L^\theta(\Om)}=M_1\eps\left(1+T^{-\frac{N}{2}(\frac1{p_0}-\frac1{\theta})}\right)e^{-\al_1 T},\\
&  {\|\nabla c(\cdot,T)\|_{\Lin}= M_2\eps\left(1+T^{-\frac{1}{2}}\right)e^{-\al_1 T}
},\\
&\norm[L^{q_0}(\Om)]{u(\cdot,T)}= M_3 \eps \left(1+T^{-\frac12+\frac{N}{2q_0}}\right)e^{-\al_2 T},\\
&\norm[L^{N}(\Om)]{\nabla u(\cdot,T)}= M_4\eps \left(1+T^{-\frac12}\right)e^{-\al_2 T},
 \end{align*}
for some $\theta\in[q_0,\infty]$.
But these quantities continuously depend on $t$ and hence each of these items would contradict Lemma \ref{lem:estimaten}, Lemma \ref{lem:estimatenac}, Lemma \ref{lem:estimateu} or Lemma \ref{lem:estimatenau}, respectively. The same contradiction arises if $\Tmax=\infty$ and $T<\infty$.
Hence $T=\infty=\Tmax$. %The definition of $T$ shows that $T_{\max}\ge T$, therefore the solution exists globally.
The remaining estimates and assertions about convergence result from Definition \ref{def:T} and Lemma \ref{converge}.
\end{proof}

\begin{remark}\label{rem:otherconditions}
After having shown Proposition \ref{prop:Szero}, let us briefly indicate the changes that are necessary in order to prove Theorem \ref{thm:alternative} instead of Theorem \ref{thm:main}.
Indeed, these are confined to the proof of the counterpart of Proposition \ref{prop:Szero}; the approximation procedure that is to follow in Section \ref{sec:generalS} remains unaffected.
We note that
\begin{equation}\label{eq:remainingsmallness} m=\nbar_0=\frac1{|\Om|}\int n_0\le |\Om|^{-\frac1{p_0}}\norm[L^{p_0}(\Om)]{n_0}\le |\Om|^{-\frac1{p_0}}\ep \end{equation}
 and hence, in particular, $\norm[L^{p_0}(\Om)]{\nbar_0} \le \eps$ and $\norm[L^{p_0}(\Om)]{n_0-\nbar_0}<2\eps$ so that in \eqref{eq:n_Ltheta} (and by extension, in all of Sections \ref{sec:constandparam} and \ref{sec:Thm}), replacing $k_1$ by $2k_1$ is sufficient to retain the validity of Lemma \ref{lem:prelimestimaten} and its consequences. The only remaining - but most noticable - place which is affected by the change from \eqref{init_small} to \eqref{init_small'} is Lemma \ref{lem:estimatenac}.
With the new condition, for the estimate of the first term in \eqref{eq:thisischangedbyothercond}, we invoke Lemma \ref{lem:heat}(iii) instead of Lemma \ref{lem:heat}(ii). In the estimate of $I_5$, we have to exchange a factor $\eps$ by $\norm[L^\infty(\Om)]{c_0}=M$, but can, thanks to \eqref{eq:remainingsmallness}, rely on the smallness of $(\nbar_0+(M_1+2k_1)\eps)\leq(|\Om|^{-\frac1{p_0}}+M_1+2k_1)\eps$ instead, so that \eqref{eq:finalestimateinlemestnac} would read
\begin{align*}
 \norm[L^\infty(\Om)]{\na c(\cdot,t)}&\leq\left(k_3+C_5k_2\left(|\Om|^{-\frac1{p_0}}+M_1+2k_1\right)Me^{(M_1+2k_1)\sigma\eps}\eps
 +3k_2M_2M_3C_6\eps \right)\left(1+t^{-\frac12}\right)e^{-\al_1t}\eps\\
&\leq \frac{M_2}{2}\eps\left(1+t^{-\frac12}\right)e^{-\al_1t}.
\end{align*}
Of course, this mandates changes in Lemma \ref{lem:chooseM} also. We give an appropriately modified version in the appendix (Lemma \ref{lem:chooseM_alternative}).
\end{remark}

\section{System with rotational flux (general S)}\label{sec:generalS}
In this section, we deal with the more general model, where $S\in C^2(\Ombar\times[0,\infty);\mathbb{R}^{N\times N})$ is a more arbitrary matrix-valued function, without the requirement of being zero close to the boundary.
In this case, we construct solutions by an approximation procedure.
In order to make the previous result applicable, we introduce a family of smooth functions
\begin{align}\label{rho}
\rho_\eta\in C_0^\infty(\Om) \text{ and } 0\le\rho_\eta(x)\le1 \; \mbox{ for }\eta\in(0,1), \;\;\rho_\eta(x)\nearrow 1 \text{ as } \eta\searrow 0
\end{align}
and given any function $S$ satisfying the assumptions of Theorem \ref{thm:main}, we let
\begin{equation}
 \label{eq:Seta}\Sep(x,n,c)=\rho_\eta(x)S(x,n,c).
\end{equation}
 Using this definition, we  regularize (\ref{0}) as follows:
\be{eta}
\left\{
\begin{array}{llc}
{n_\eta}_t=\Delta n_\eta-\nabla\cdot(n_\eta S_\eta (x,n_\eta,c_\eta)\cdot\nabla c_\eta)-u_\eta\cdot\nabla n_\eta, &(x,t)\in \Omega\times (0,T),\\[6pt]
\displaystyle
{c_\eta}_t=\Delta c_\eta-n_\eta c_\eta-u_\eta\cdot\nabla c_\eta, &(x,t)\in\Omega\times (0,T),\\[6pt]
\displaystyle
{u_\eta}_t=\Delta u_\eta-(u_\eta\cdot\nabla )u_\eta+\nabla P+n_\eta\nabla\Phi, \quad\nabla\cdot u_\eta=0, &(x,t)\in\Omega\times (0,T),\\[6pt]
\displaystyle
\nabla n_\eta\cdot\nu=\nabla c_\eta\cdot\nu=0,\;\;u_\eta=0,&(x,t)\in \partial\Omega\times (0,T),\\[6pt]
n_\eta(x,0)=n_{0}(x),\quad  c_\eta(x,0)=c_{0}(x),\quad  u_\eta(x,0)=u_{0}(x),
& x\in\Omega.
\end{array}
\right.
\ee

We have chosen $\Sep$ in such a way that it satisfies the additional condition imposed in Proposition \ref{prop:Szero}. Therefore the existence of solutions follows from the previous section:

\begin{lem}\label{lem:approx_existence}
{ Let $N\in\set{2,3}$,
{$p_0\in(\frac N2,{\infty})$}, $q_0\in(N,{\infty})$, and $\beta\in(\frac N4,1)$. Let $C_S>0$, $\Phi\in C^{1+\delta}(\Ombar)$ with some $\delta>0$, $m>0$. Let $\alpha_1\in(0,\min\{m,\lambda_1\})$ and $\alpha_2\in(0,\min\set{\alpha_1,\lambda_1'})$. }
 Let $(n_0,c_0,u_0)$ satisfy \eqref{inidata} and \eqref{init_small}.
 Then for any $\eta\in(0,1)$ there is a global classical solution $(\nep,\cep,\uep,P_\eta)$ of \eqref{eta} and there {are constants $C_8, C_{9}, C_{10}, C_{11}>0$} such that for any $\eta\in(0,1)$ the estimates
 \begin{equation}\label{eq:expdecayeta}
  \norm[W^{1,q_0}(\Om)]{\cep(\cdot,t)}\le {C_{11}}e^{-\al_1 t}, \quad \norm[\Lin]{\nep(\cdot,t)-\nbar_0}\le {C_{10}} e^{-\al_1 t}, \quad \norm[\Lin]{\uep(\cdot,t)}\le {C_{9}}e^{-\al_2 t}
 \end{equation}
 hold for any $t>0$ and such that moreover the solutions satisfy
 \begin{equation}\label{eq:decayuetaDbeta}
  \norm[L^2(\Om)]{A^\beta \uep(\cdot,t)}\le {C_8}e^{-\al_2t}
 \end{equation}
 for any $t>0$ and any $\eta\in(0,1)$.
 { Moreover there is $C_{12}>0$ such that for any $\eta\in(0,1)$ and any $t>0$
 \begin{equation}\label{eq:decaynaceta}
  \norm[\Lin]{\na c(\cdot,t)}\leq {C_{12}}\left(1+t^{-\frac12}\right)e^{-\al_1 t}.
 \end{equation}}%Will be used for Hölder continuity on (\tau,T)
\end{lem}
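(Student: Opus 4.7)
The idea is that Proposition~\ref{prop:Szero} has been tailored exactly to the regularized problem \eqref{eta}, so the argument reduces to checking its hypotheses, invoking it, and then verifying that the constants it produces are independent of $\eta$.

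I would first observe that $\Sep = \rho_\eta S$ meets the additional hypothesis of Proposition~\ref{prop:Szero}: since $\rho_\eta \in C_0^\infty(\Om)$, the set $K_\eta := \supp \rho_\eta$ is a compact subset of $\Om$ and $\Sep \equiv 0$ on $\Om \setminus K_\eta$. The regularity $\Sep \in C^2(\Ombar \times [0,\infty)^2)$ is inherited from $S$, and by \eqref{rho} we have $|\Sep| \le |S| \le C_S$ with the same constant $C_S$ for every $\eta \in (0,1)$. To reconcile the broader parameter ranges of the present lemma with those of Proposition~\ref{prop:Szero}, I would pick auxiliary exponents $\tilde p_0 \in (\tfrac N2, N)$ with $\tilde p_0 \le p_0$ and $\tilde q_0 \in (N, (\tfrac1{\tilde p_0} - \tfrac1N)^{-1})$ with $\tilde q_0 \le q_0$, and apply Proposition~\ref{prop:Szero} to the $\eta$-problem with these as its primary parameters and with $q_1 := q_0$. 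H\"older's inequality transfers the smallness of $n_0 - \nbar_0$ from $L^{p_0}$ to $L^{\tilde p_0}$ at the cost of a factor $|\Om|^{\tfrac1{\tilde p_0} - \tfrac1{p_0}}$, and absorbing such finitely many geometric factors into $\eps$ preserves \eqref{eq:propinitcond}. Each regularized problem then admits a global classical solution $(\nep,\cep,\uep,P_\eta)$ to which the conclusions of Proposition~\ref{prop:Szero} apply.

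The decisive step --- and the only one that calls for genuine care --- is the $\eta$-uniformity of the constants. Inspection of Section~\ref{sec:constandparam} together with Lemmata~\ref{lem:chooseM}, \ref{lem:bounduDbeta} and~\ref{converge} shows that $\eps, M_1, \ldots, M_4, C_8, \ldots, C_{11}$ depend on $\Sep$ only through the uniform pointwise bound $C_S$, and otherwise only on the fixed quantities $N, \tilde p_0, \tilde q_0, q_1, \beta, \Phi, m, \alpha_1, \alpha_2$; none of these depend on $\eta$. Hence one and the same choice of constants serves for every $\eta \in (0,1)$, which yields \eqref{eq:expdecayeta} and \eqref{eq:decayuetaDbeta} uniformly in $\eta$. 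The remaining bound \eqref{eq:decaynaceta} is nothing but the conclusion of Lemma~\ref{lem:estimatenac} applied to $c_\eta$, with $C_{12} := \eps M_2 / 2$, and the $L^\infty$-bound on $\uep$ featured in \eqref{eq:expdecayeta} follows from \eqref{eq:decayuetaDbeta} via the embedding $D(A^\beta) \embeddedinto L^\infty(\Om)$ valid for $\beta > N/4$. No new non-trivial estimate needs to be performed.
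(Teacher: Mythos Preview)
Your proposal is correct and follows essentially the same approach as the paper's own proof: verify that each $S_\eta$ satisfies the extra hypothesis of Proposition~\ref{prop:Szero} with the same bound $C_S$, reduce the wider parameter range $p_0\in(\frac N2,\infty)$, $q_0\in(N,\infty)$ to that of Proposition~\ref{prop:Szero} by passing to smaller auxiliary exponents and setting $q_1:=q_0$, and read off the estimates with constants depending only on $C_S$ and the fixed data, hence uniform in $\eta$. Your identification $C_{12}=\eps M_2/2$ via Lemma~\ref{lem:estimatenac} matches the paper's choice (the paper takes the slightly larger $\eps M_2$ from \eqref{eq:estimateslikeindefT}).
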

\begin{proof}
 These assertions are part of Proposition \ref{prop:Szero} if we set $C_{13}:=\eps M_2$ in \eqref{eq:estimateslikeindefT}, at least for $p_0<N$, $q_0<\left(\frac1{p_0}-\frac1N\right)^{-1}$. For larger values of $p_0$ or $q_0$, \eqref{init_small} entails the validity of \eqref{init_small} for smaller $p_0$, $q_0$ if $\eps$ is adequately adjusted, and Lemma \ref{lem:approx_existence} still follows from Proposition \ref{prop:Szero}, if $q_0$, $p_0$ and $q_1$ are suitably chosen therein.
\end{proof}

From this family of approximate solutions we aim to extract a convergent sequence.
Already the {frail manner of} convergence of $\Sep$, however, {puts us far from} the immediate conclusion that the limiting object satisfies \eqref{0} in a pointwise sense. Accordingly, we will first ensure that it is a weak solution; afterwards we will show that it is sufficiently regular so as to be a classical solution. For this purpose, we require a definition of ``weak solution":

\begin{definition}\label{def:1}
We say that $(n,c,u)$ is a weak solution of (\ref{0}) associated to initial data $(n_0,c_0,u_0)$ if
\begin{align*}
  n,c \in L^2_{loc}([0,\infty),W^{1,2}(\Om)),
 u \in L^2_{loc}([0,\infty),W_{0,\sigma}^{1,2}(\Om)),
\end{align*}
and for all $\psi\in C_0^\infty(\Ombar\times[0,\infty))$ and all $\Psi\in C_{0,\sigma}^\infty(\Om\times[0,\infty))$ the following identities hold:
% \begin{align}\label{weaksol}
% \displaystyle
% -\int_0^\infty\intO n\psi_t-\intO n_0\psi(\cdot,0)&=-\int_0^\infty\intO \nabla n\cdot\nabla\psi
% +\int_0^\infty\intO nS(x,n,c)\nabla c\cdot\nabla\psi+\int_0^\infty\intO nu\cdot\nabla\psi,\\[6pt]
% \displaystyle
% -\int_0^\infty\intO c\psi_t-\intO c_0\psi(\cdot,0)&=-\int_0^\infty\intO \nabla c\cdot\nabla\psi
% -\int_0^\infty\intO nc\psi+\int_0^\infty\intO cu\cdot\nabla\psi,\\[6pt]
% \displaystyle
% -\int_0^\infty\intO u\cdot\Psi_t-\intO u_0\cdot\Psi(\cdot,0)&=-\int_0^\infty\intO \nabla u\cdot\nabla\Psi
% +\int_0^\infty\intO (u\cdot\nabla) u\cdot{\Psi}+\int_0^\infty\intO n\nabla \Phi\cdot\Psi.
% \end{align}
\begin{align}\label{weaksol}
-\int_0^\infty\!\!\intO n\psi_t-\intO n_0\psi(\cdot,0)&=-\int_0^\infty\!\!\intO \nabla n\cdot\nabla\psi
+\int_0^\infty\!\!\intO nS(x,n,c)\nabla c\cdot\nabla\psi+\int_0^\infty\!\!\intO nu\cdot\nabla\psi,\\
-\int_0^\infty\!\!\intO c\psi_t-\intO c_0\psi(\cdot,0)&=-\int_0^\infty\!\!\intO \nabla c\cdot\nabla\psi
-\int_0^\infty\!\!\intO nc\psi+\int_0^\infty\!\!\intO cu\cdot\nabla\psi,\\
-\int_0^\infty\!\!\intO u\cdot\Psi_t-\intO u_0\cdot\Psi(\cdot,0)&=-\int_0^\infty\!\!\intO \nabla u\cdot\nabla\Psi
+\int_0^\infty\!\!\intO (u\cdot\nabla) u\cdot{\Psi}+\int_0^\infty\!\!\intO n\nabla \Phi\cdot\Psi.
\end{align}
\end{definition}

Within this framework, we shall show the sequence of solutions to (\ref{eta}) to have a limit. We begin the extraction of convergent subsequences with convergence of $n$ and $c$ in H\"older spaces in the following lemma:

\begin{lem}\label{lem:convnc}
 There are $\gamma>0$, a sequence $\{\eta_j\}_{j{\in\N}}\searrow 0$ and $n,c\in C^{1+\gamma,\gamma}_{loc}(\Ombar\times(0,\infty))$ such that
 \begin{align}
  n_{\eta_j}\to & n\qquad\mbox{ in } C^{\gamma,\frac\gamma2}_{loc}(\Ombar\times(0,\infty))\label{con:n}\\
  c_{\eta_j}\to & c\qquad\mbox{ in } C^{\gamma,\frac\gamma2}_{loc}(\Ombar\times(0,\infty))  \label{con:c}
 \end{align}
 as $j\to\infty$.
\end{lem}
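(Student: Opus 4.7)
The plan is to derive uniform-in-$\eta$ parabolic Hölder estimates for $n_\eta$ and $c_\eta$ on compact subsets of $\Ombar\times(0,\infty)$ and then conclude by Arzelà--Ascoli together with a diagonal procedure over a compact exhaustion.

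First, I would collect uniform bounds from Lemma \ref{lem:approx_existence}: on any slab $\Ombar\times[\tau,T]$ with $0<\tau<T$ we have uniform $L^\infty$ bounds on $\nep$, $\cep$, $\uep$ and on $\na\cep$ (since $(1+t^{-\frac12})e^{-\al_1 t}$ is bounded for $t\ge\tau$), together with a uniform bound on $\norm[L^2]{A^\beta\uep}$ which, via $D(A^\beta)\embeddedinto W^{1,p}(\Om)$ for suitable $p$, supplies a uniform Sobolev bound on $\uep$. In particular $S_\eta(\cdot,\nep,\cep)$ is uniformly bounded by $C_S$ and the flux $\nep S_\eta\na\cep$ is uniformly bounded in $L^\infty$.

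Second, I would apply standard linear parabolic Hölder theory to each of the scalar equations. For $\cep$, the equation ${\cep}_t=\Lap\cep -\nep\cep -\uep\cd\na\cep$ with homogeneous Neumann boundary condition has a right-hand side uniformly bounded in $L^\infty$; a Ladyzhenskaya--Solonnikov--Uraltseva / Lieberman-type Hölder estimate yields uniform $\cep\in C^{\gamma,\gamma/2}(\Ombar\times[\tau,T])$ for some $\gamma\in(0,1)$. For $\nep$, I would rewrite the drift using $\na\cd\uep=0$ so that
\[
{\nep}_t=\Lap\nep-\na\cd(\nep S_\eta(\cdot,\nep,\cep)\na\cep+\nep\uep),
\]
a divergence-form equation with uniformly bounded flux and conormal boundary condition $(\na\nep-\nep S_\eta\na\cep)\cd\nu=0$ (since $\uep=0$ on $\dOm$). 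The De Giorgi--Nash--Moser theory, in its version for conormal problems with bounded data, then delivers a uniform $C^{\gamma,\gamma/2}(\Ombar\times[\tau,T])$-bound for $\nep$, possibly after shrinking $\gamma$.

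Third, since $C^{\gamma,\gamma/2}\embeddedinto C^{\gamma',\gamma'/2}$ compactly for any $\gamma'<\gamma$, Arzelà--Ascoli yields $C^{\gamma',\gamma'/2}(\Ombar\times[\tau,T])$-convergence along a subsequence. A standard diagonal extraction over a sequence $\tau_k\searrow 0$, $T_k\nearrow\infty$ produces a single sequence $\eta_j\searrow 0$ and limits $n,c$ fulfilling \eqref{con:n}--\eqref{con:c}. The claimed regularity $n,c\in C^{1+\gamma,\gamma}_{loc}(\Ombar\times(0,\infty))$ can then be upgraded for the limits by Schauder theory, once the equations are known to hold for $(n,c,u)$.

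The step I expect to be the main obstacle is the uniform-in-$\eta$ boundary Hölder estimate for $\nep$: although $S_\eta=\rho_\eta S$ vanishes near $\dOm$ for each fixed $\eta$, the support of $1-\rho_\eta$ shrinks to $\dOm$ as $\eta\searrow 0$, so we cannot rely on the boundary data being trivial in the limit. One has to appeal to a version of parabolic Hölder theory for conormal problems that tolerates merely bounded (and not vanishing) inhomogeneous boundary data (such as Lieberman's treatment), and to verify that the resulting estimates depend only on the $L^\infty$-bounds assembled in the first step.
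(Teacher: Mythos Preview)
Your approach is essentially the paper's: both rewrite the equations for $n_\eta$ and $c_\eta$ in divergence form with conormal boundary condition and invoke parabolic H\"older regularity (the paper cites Porzio--Vespri, Thm.~1.3) with constants depending only on $L^p$--$L^q$ norms of the structural functions, which are uniform in $\eta$ by Lemma~\ref{lem:approx_existence}, then extract a convergent subsequence via the compact embedding $C^{\gamma_i,\gamma_i/2}\hookrightarrow\hookrightarrow C^{\gamma,\gamma/2}$. Your boundary worry is a non-issue: for each fixed $\eta$ the flux $a=\nabla n_\eta-n_\eta S_\eta\nabla c_\eta-n_\eta u_\eta$ satisfies $a\cdot\nu=0$ exactly (since $S_\eta$ vanishes near $\partial\Omega$ and $u_\eta|_{\partial\Omega}=0$), and the Porzio--Vespri estimate depends only on the uniform bounds $\|\psi_0\|_{L^p_tL^q_x}$, $\|\psi_1\|_{L^p_tL^q_x}$ of the lower-order terms, not on how close $\supp S_\eta$ is to $\partial\Omega$.
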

\begin{proof}
For any $\eta\in(0,1)$ the function $\nep$ is a bounded distributional solution of the parabolic equation
\[
 \ntilde_t-div\ a(x,t,\ntilde,\na \ntilde)=b(x,t,\ntilde,\na \ntilde) \qquad \mbox{in }\Om\times(0,\infty)
\]
for the unknown function $\ntilde$, with $a(x,t,\ntilde,\na \ntilde)=\na \ntilde-\nep S_\eta \na \cep -\uep \nep$, and $b\equiv 0$, {and} $a(x,t,\ntilde,\na \ntilde)\cdot \nu=0$ on the boundary of the domain.
Defining $\psi_0(x,t)=|\nep(x,t) S_\eta(x,\nep(x,t),\cep(x,t))\na\cep(x,t)|^2+|\uep(x,t)\nep(x,t)|^2$ and $\psi_1=|\nep S_\eta(\cdot,\nep,\cep)\na\cep|$ we see that $a(x,t,\ntilde,\na \ntilde)\na \ntilde\geq \frac12|\na \ntilde|^2-\psi_0$ and $|a(x,t,\ntilde,\na \ntilde)|\leq |\na \ntilde|+\psi_1$.
If we let $T>0$ and $\tau\in(0,T)$, the regularity result \cite[Thm 1.3]{porzio_vespri} therefore asserts the existence of $\gamma_1 \in(0,1)$ and $c_1>1$ such that $\norm[C^{\gamma_1,\frac{\gamma_1}2}(\Ombar\times(\tau,T))]{\nep}\leq c_1$.

According to the aforementioned theorem, these numbers $\gamma_1$ and {$c_1$} depend on $\norm[L^\infty(\Om\times(\tau,T))]{\nep}$ and the norms of $\psi_0$, $\psi_1$ in certain spaces $L^p((\tau,T), L^q(\Om))$, where $p$ and $q$ must be sufficiently large, but need not be infinite. Such bounds have been asserted independently of $\eta$ in \eqref{eq:expdecayeta} and \eqref{eq:decaynaceta} in Lemma \ref{lem:approx_existence}, so that we can conclude the existence of $\gamma_1\in(0,1)$ and $c_1>0$ such that
\[
 \normm{C^{\gamma_1,\frac{\gamma_1}2}(\Ombar\times[\tau,T])}{n_\eta}\leq c_1 \qquad \mbox{ for every }\eta\in(0,1).
\]
Moreover, since $b\equiv 0$, according to \cite[Remark 1.3]{porzio_vespri}, $\gamma_1$ is independent of $\tau$.
%The Arzel\`a-Ascoli theorem hence enables us to find a sequence $(\eta_j)_j$ such that \eqref{con:n} holds.
%\ctilde_t-div(\nabla \ctilde+\uep\cep)=-\cep+\nep
%a(x,t,\ctilde,\na \ctilde)=\na \ctilde+\uep\cep
%b(x,t,\ctilde,\na \ctilde)=\nep(x,t)-\cep(x,t)
%a\na\ctilde\geq \frac12 |\na \ctilde|^2-\psi_0, \psi_0(x,t)=\frac12 |\uep|^2\cep^2
%|a(x,t,\ctilde,\na\ctilde)|=|\na\ctilde|+|\uep|\cep\leq |\na\ctilde|+\psi_1, \psi_1(x,t)=\cep(x,t)|\uep(x,t)|
%|b(x,t,\ctilde,\na\ctilde|\leq 0+\psi_2, \psi_2(x,t)=\nep(x,t)-\ceps(x,t),
%\Phi(|c|)\equiv 1, \psi_i\in L^\infty
%Since b satisfies (A_3') and not only (A_3) [0\leq c_2\Phi(|c|)|\na c|^{2-1} and not only 0\leq c_2\Phi(|c|)|\na c|^{2}], again Rem. 1.3 applies and \gamma_2 is independent of \tau.
By a similar reasoning applied to the second equation and again invoking \cite[Thm 1.3]{porzio_vespri}, we can find $\gamma_2\in(0,1)$ and $c_2>0$ such that
\[
 \normm{C^{\gamma_2,\frac{\gamma_2}2}(\Ombar\times[\tau,T])}{\cep}\leq c_2 \qquad \mbox{ for every }\eta\in(0,1).
\]
If we now pick $\gamma\in(0,\min\set{\gamma_1,\gamma_2})$, the compact embeddings $C^{\gamma_i,\frac{\gamma_i}2}(\Ombar\times[\tau,T])\embeddedinto\embeddedinto C^{\gamma,\frac{\gamma}2}(\Ombar\times[\tau,T])$, $i\in\set{1,2}$, %together with the usual diagonal sequence argument
allow for extraction of a sequence such that \eqref{con:n} and \eqref{con:c} hold.
\end{proof}

In order to achieve convergence in the third component of the solutions,
we will combine estimates we already have obtained with Theorem 2.8 of \cite{giga_sohr} and the embedding result \cite[Thm 1.1]{amann}, which asserts that for {$\gamma\in(0,1)$} the set of functions with $\norm[L^p(0,T;W^{2,p}(\Om))]{u}$ and $\norm[L^p(0,T;L^p(\Om))]{u_t}$ being bounded is a compact subset of $C^{{\gamma}}(0,T;C^{1+{\gamma}}(\Ombar))$ if $p$ is large. The latter is an argument employed also in \cite[Cor. 7.7]{wk_ns_oxytaxis}, the former also lies at the center of the proof of \cite[Lemma 7.6]{wk_ns_oxytaxis}, but is substantially easier here due to the estimates stated in Lemma \ref{lem:approx_existence}.

\begin{lem}\label{lem:convu}
 There are $\gamma>0$, a subsequence $\{\eta_j\}_{j{\in\N}}\searrow 0$ of the sequence given in Lemma \ref{lem:convnc} and $u\in C^{1+\gamma,\gamma}_{loc}(\Ombar\times(0,\infty);\R^N)$ such that
 \begin{equation}\label{con:u}
  u_{\eta_j}\to u\qquad\mbox{ in } C^{1+\gamma,\gamma}_{loc}(\Ombar\times(0,\infty))
 \end{equation}
 as $j\to\infty$.
\end{lem}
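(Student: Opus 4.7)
The approach is to rewrite the third equation of \eqref{eta} as the abstract Stokes evolution equation
\[
 \pa_t \uep + A\uep = \calP\bigl((\nep-\nbar_0)\na\Phi - (\uep\cdot\na)\uep\bigr) =: f_\eta
\]
(using that $\calP \na(\nbar_0\Phi)=0$) and to combine maximal Sobolev regularity for the Stokes semigroup with Amann's compact embedding. The first step of the plan is to bound $f_\eta$ uniformly in $\eta\in(0,1)$ on $(\tau,T)$ in $L^p(\Om)$ with $p$ as large as later needed. For any $0<\tau<T$, the drift term $(\nep-\nbar_0)\na\Phi$ is uniformly bounded in $L^\infty(\Om\times(\tau,T))$ thanks to \eqref{eq:expdecayeta} and $\Phi\in C^{1+\delta}(\Ombar)$. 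For the convective term, the uniform $L^\infty$-bound on $\uep$ from \eqref{eq:expdecayeta} reduces matters to controlling $\|\na\uep\|_{\Lp}$. Lemma \ref{lem:stokes}(iv) together with \eqref{eq:decayuetaDbeta} supplies such a bound for some $q>N$ provided $\beta$ is close enough to $1$; for general $\beta\in(\tfrac N4,1)$ we may first feed the bound for a smaller exponent into the variation-of-constants representation \eqref{eq:varofconst_u} and raise the integrability in one or two iterations of Lemma \ref{lem:stokes}(iii), using the $L^\infty$-bounds on $\nep$ and $\uep$ to handle the nonlinearities.

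With $f_\eta$ bounded in $L^p((\tau,T);L^p_\sigma(\Om))$ for some $p$ to be fixed in a moment, the maximal regularity theorem \cite[Thm.~2.8]{giga_sohr} applied on $(\tau,T)$—noting that $\|\uep(\cdot,\tau)\|_{D(A^\beta)}$ is uniformly bounded by \eqref{eq:decayuetaDbeta}—yields a constant $C=C(\tau,T,p)$ independent of $\eta$ such that
\[
 \|\uep\|_{L^p((\tau,T);W^{2,p}(\Om))} + \|\pa_t\uep\|_{L^p((\tau,T);L^p(\Om))} \le C.
\]
Choosing $p$ so large that $2-\tfrac Np>1$, the compact embedding of \cite[Thm.~1.1]{amann} then asserts that bounded sets of the left-hand space above are relatively compact in $C^{\tilde\gamma}([\tau,T];C^{1+\tilde\gamma}(\Ombar))$ for some $\tilde\gamma\in(0,1)$.

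Finally, a standard diagonal extraction across an exhausting sequence of intervals $[\tfrac1k,k]\nearrow(0,\infty)$, carried out starting from the sequence $\{\eta_j\}$ of Lemma \ref{lem:convnc} and passing to a further subsequence at each step, produces a sequence—still labelled $\{\eta_j\}$—and a limit $u\in C^{1+\gamma,\gamma}_{loc}(\Ombar\times(0,\infty))$ along which \eqref{con:u} holds, after possibly shrinking $\gamma$ so as to match the one from Lemma \ref{lem:convnc}. The main technical hurdle is the bootstrap in the first step: the bare bound on $\|A^\beta\uep\|_{L^2(\Om)}$ from \eqref{eq:decayuetaDbeta} does not by itself yield $\na\uep\in L^p(\Om)$ with $p>N$ when $N=3$ and $\beta$ is only marginally above $\tfrac 34$, so the required gain in integrability has to be squeezed out of the semigroup representation by exploiting the $L^\infty$-estimates on $\nep$ and $\uep$ together with the smoothing action of $e^{-tA}$.
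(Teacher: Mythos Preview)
Your overall strategy---maximal Sobolev regularity for the Stokes problem from \cite[Thm.~2.8]{giga_sohr} followed by the compact embedding of \cite[Thm.~1.1]{amann}---is exactly the one the paper uses, and the diagonal extraction at the end is fine. There is, however, one genuine gap and one noticeable difference in execution.

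\textbf{The gap: initial data for Giga--Sohr.} Theorem~2.8 in \cite{giga_sohr} is stated for zero initial data. For nonzero data the estimate requires $u_\eta(\cdot,\tau)$ to lie (with an $\eta$-independent bound) in the trace space $(L^p_\sigma,D(A_p))_{1-1/p,p}$, which for large $p$ is close to $W^{2-2/p,p}$. The uniform bound you invoke, $\|A^\beta u_\eta(\cdot,\tau)\|_{L^2}\le C_8$ from \eqref{eq:decayuetaDbeta}, only places $u_\eta(\cdot,\tau)$ in $H^{2\beta}_2$, which does \emph{not} embed into that trace space when $p$ is large and $\beta$ is close to $N/4$. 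The paper sidesteps this by multiplying with a smooth temporal cutoff $\xi$ vanishing on $[0,\tau]$ and equal to $1$ on $[2\tau,\infty)$: then $\xi u_\eta$ has zero initial data and solves the Stokes problem with right-hand side $\calP(\xi(u_\eta\cdot\na)u_\eta+\xi n_\eta\na\Phi+\xi' u_\eta)$, so \cite[Thm.~2.8]{giga_sohr} applies directly. You should do the same; once the cutoff is in place, your argument goes through.

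\textbf{The difference: handling the convective term.} To bound $\|(u_\eta\cdot\na)u_\eta\|_{L^p}$ with $p$ large you propose a semigroup bootstrap: start from $\|A^\beta u_\eta\|_{L^2}\le C_8$, deduce a bound on $\|\na u_\eta\|_{L^{p_1}}$ for some $p_1>N$ via Lemma~\ref{lem:stokes}(iv), then iterate through the variation-of-constants formula \eqref{eq:varofconst_u} and Lemma~\ref{lem:stokes}(iii) to raise $p_1$. This works, but the paper avoids the iteration entirely. It simply carries the term $c_3\int_\tau^T\|\na(\xi u_\eta)\|_{L^s}^s$ on the right-hand side of the Giga--Sohr estimate, interpolates via Gagliardo--Nirenberg as $\|\na(\xi u_\eta)\|_{L^s}\le c_4\|D^2(\xi u_\eta)\|_{L^s}^a\|\xi u_\eta\|_{L^r}^{1-a}$ with some $a\in(\tfrac12,1)$ and $r<s$, uses the uniform $L^\infty$-bound on $u_\eta$ from \eqref{eq:expdecayeta} to control the low-order factor, and absorbs the resulting sublinear power of $\int\|D^2(\xi u_\eta)\|_{L^s}^s$ into the left-hand side. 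This is shorter and works for every $s>N$ at once; your route is more hands-on but equally valid once the initial-data issue is fixed.
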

\begin{proof}
Let us fix $\tau \in(0,\infty)$. We introduce a smooth, nondecreasing function $\xi\colon\R\to\R$ which satisfies $\xi(t)=0$ for $t\le \tau$ and $\xi(t)=1$ for $t\ge 2\tau$ and will consider the functions $\xi\uep$ with $\eta\in(0,1)$ in the following.
Given $s\in(1,\infty)$, \cite[Thm. 2.8]{giga_sohr} provides ${c_1=c_1}(s,\Om)$ such that, for any $\eta\in(0,1)$, $\xi \uep$, being a solution of the Stokes equation with right-hand side $\calP (\xi (\uep\cdot\na)\uep)+\calP (\xi \nep\na\Phi)+\calP (\xi' \uep)$ satisfies
\[
 \int_{{\tau}}^T \norm[L^s(\Om)]{(\xi \uep)_t}^s+\int_{{\tau}}^T\norm[L^s(\Om)]{D^2(\xi \uep)}^s \leq {c_1} \left(0+ \int_{{\tau}}^T \norm[L^s(\Om)]{\calP  (\xi \uep\cdot\na) \uep + \calP  \xi (\nep-\nbar_0)\na\Phi + \calP  \xi' \uep}^s \right)
\]%here space- and time- exponents can be chosen different from each other, but that seem
for any $T>\tau$.
From the exponential decay of $\norm[L^\infty(\Om)]{\nep-\nbar_0}$ and of $\norm[L^\infty(\Om)]{\uep(\cdot,t)}$ as stated in \eqref{eq:expdecayeta} we obtain the existence of $c_2, c_3>0$ such that for any $\eta\in(0,1)$
\begin{equation}\label{eq:estimateforu_stokesreg}
{ \int_\tau^{{T}} }
 \norm[L^s(\Om)]{(\xi \uep)_t}^s+{ \int_\tau^{{T}} }\norm[L^s(\Om)]{D^2(\xi \uep)}^s \leq c_2+c_3
 { \int_\tau^{{T}} }\norm[L^s(\Om)]{\na (\xi \uep)}^s {\qquad  \mbox{for any } T>\tau}.
\end{equation}
Let $s>N$ and fix {$r\in(1,s)$, so} that $\frac1N+\frac1r-\frac1s>0$. Defining
\[
 a=\frac{\frac1N+\frac1r-\frac1s}{\frac2N+\frac1r-\frac1s},
\]
we then observe that $a\in ({\frac12},1)$ and hence the Gagliardo-Nirenberg inequality yields a constant $c_4$ such that
\[
 \norm[L^s(\Om)]{\na (\xi \uep)(\cdot,t)}^s \leq c_4\norm[L^s(\Om)]{D^2 (\xi \uep)(\cdot,t)}^{as}\norm[L^r(\Om)]{(\xi \uep)(\cdot,t)}^{(1-a)s} \qquad \mbox{for all } t\in(0,T)
\]
and an application of {this together with} the $L^\infty$-estimate for $u$ {from \eqref{eq:expdecayeta}} and H\"older's inequality in \eqref{eq:estimateforu_stokesreg} show that there is $c_5>0$ such that for any $T>\tau$ and any $\eta\in(0,1)$
\[
 \int_\tau^{{T}}  \norm[L^s(\Om)]{{(\xi}u_{\eta})_t}^s+
 \int_\tau^{{T}}  \norm[L^s(\Om)]{D^2{(\xi}\uep)}^s
  \leq c_2+c_5{|T-\tau|^{1-a}}\left(
 \int_\tau^{{T}}  \norm[L^s(\Om)]{D^2 {(\xi}\uep)}^s\right)^a,
\]
 and we can conclude boundedness of $\norm[L^s(\tau,T;L^s(\Om))]{D^2{(\xi}\uep)}$ and then of $\norm[L^s(\tau,T;L^s(\Om))]{({\xi }u_{\eta})_t}$ with bounds independent of $\eta$.

All in all, for any $s>1$ {and any $T>2\tau$, there is ${c_6}>0$ such that} for any $t\in(2\tau,T)$ and any $\eta\in(0,1)$
\begin{equation}\label{eq:boundutD2u}
 \norm[L^s((t,T);L^s(\Om))]{u_{\eta t}}+\norm[L^s((t,{T});W^{2,s}(\Om))]{\uep}\leq {c_6}.
\end{equation}

Now, letting $\gamma'\in(0,1)$, using appropriately large $s$ and referring to \cite[Thm 1.1]{amann}, {for any $T>0$} we obtain a constant $c_{7}>0$ so that
\[
 \norm[C^{1+\gamma',\gamma'}(\Ombar\times(t,{T}))]{\uep}=\norm[C^{1+\gamma',\gamma'}(\Ombar\times(t,{T}))]{\xi \uep}\leq c_{7} \qquad \mbox{ for all }t\in(2\tau,T).
\]

Therefore, for any $\tau>0$, $T>2\tau$ we can find a subsequence of the sequence from Lemma \ref{lem:convnc} such that  $\uep\to  u$ and $\na\uep\to\na u$ in $C^{\gamma,\gamma}(\Ombar\times(t,{T}))$ for some $\gamma < \gamma'$ and for any {$t\in({2}\tau,T)$}.
\end{proof}

For $u$, this lemma already covers the convergence of first spatial derivatives. Also concerning $n$ and $c$, at least some kind of convergence of these quantities seems desirable. For the fluid velocity field, in fact, slightly higher derivatives are of interest. We obtain convergence for these in the following lemma:

\begin{lem}\label{lem:convderivatives}
 There exists a subsequence $\{\eta_j\}_{j{\in\N}}\searrow 0$ of the sequence from Lemma \ref{lem:convu} such that
\begin{align}
\label{con:nacwstar}
 \nabla\cep\wstarto&\ \nabla c &&\text{ in }L^\infty((0,\infty),L^{{q_0}}(\Om)),\\
\label{con:nablac}
 \nabla\cep\rightharpoonup&\ \nabla c &&\text{ in }L^2(\Om\times(0,\infty)),\\
\label{con:uDbeta}
 \uep \wstarto&\ u &&\text{ in } L^\infty((0,\infty),D(A^\beta)),\\
 \label{con:nablan}
 \nabla\nep\rightharpoonup&\ \nabla n &&\text{ in }L^2(\Om\times(0,\infty)),\\
\label{con:nSnablac}
 \nep S(\cdot,\nep,\cep)\nabla \cep\rightharpoonup&\ nS(\cdot,n,c)\nabla c  &&
 \mbox{ in } L^1_{loc}(\Om\times(0,\infty)),\\
 \label{con:nt}
 n_{\eta t}\rightharpoonup&\ n_t &&\mbox{ in } L^2((0,\infty),(W_0^{1,2}(\Om))^*),\\
 \label{con:ct}
 c_{\eta t}\rightharpoonup&\ c_t &&\mbox{ in } L^2((0,\infty),(W_0^{1,2}(\Om))^*),\\
 \label{con:ut}
 u_{\eta t}\rightharpoonup&\ u_t && \mbox{ in } L^2((0,\infty),(W_{0,\sigma}^{1,2}(\Om))^*).
\end{align}
as $\eta=\eta_j\searrow0$.
%{$\cep$ weak star to $c$ in $L^\infty(;W^{1,something})$ - check if these spaces allow for extraction of weakstar conv subsequences}
\end{lem}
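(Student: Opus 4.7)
\textbf{Proof plan for Lemma \ref{lem:convderivatives}.}

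The strategy is to extract each weak/weak-$*$ limit from a uniform bound in the corresponding space, and then identify each limit using the strong convergences already established in Lemmas \ref{lem:convnc} and \ref{lem:convu}. For \eqref{con:nacwstar} and \eqref{con:uDbeta}, the uniform bounds $\norm[W^{1,q_0}(\Om)]{\cep(\cdot,t)}\le C_{11}e^{-\al_1 t}$ and $\norm[L^2(\Om)]{A^\beta \uep(\cdot,t)}\le C_8 e^{-\al_2 t}$ from Lemma \ref{lem:approx_existence} provide uniform bounds in $L^\infty((0,\infty);L^{q_0}(\Om))$ respectively $L^\infty((0,\infty);D(A^\beta))$, and Banach--Alaoglu plus a diagonal extraction yields weak-$*$ convergent subsequences. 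Uniqueness of the weak-$*$ limits follows by pairing with compactly supported test functions and appealing to the locally uniform convergences of Lemmas \ref{lem:convnc}, \ref{lem:convu}. The convergences \eqref{con:nablac} and (as far as $\na u$ is concerned, implicit in \eqref{con:uDbeta}) follow along the same lines: since $q_0\ge 2$, $\norm[L^2(\Om)]{\na\cep(\cdot,t)}\le|\Om|^{\frac12-\frac1{q_0}}C_{11}e^{-\al_1 t}$ is square-integrable on $(0,\infty)$ uniformly in $\eta$, giving weak convergence in $L^2(\Om\times(0,\infty))$.

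The main analytical work goes into \eqref{con:nablan}. For this, I would multiply the first equation of \eqref{eta} by $\nep-\nbar_0$ and integrate over $\Om$; the convective term vanishes because $\na\cdot\uep=0$ and $\uep=0$ on $\dOm$, yielding
\[
 \frac12\frac{d}{dt}\io(\nep-\nbar_0)^2+\io|\na\nep|^2=\io \nep\Sep\na\cep\cdot\na\nep.
\]
An application of Young's inequality together with $|\Sep|\le C_S$ and the uniform bound $\norm[\Lin]{\nep(\cdot,t)}\le m+C_{10}e^{-\al_1 t}$ dominates the right-hand side by $\frac12\io|\na\nep|^2+\frac{C_S^2}{2}\norm[\Lin]{\nep(\cdot,t)}^2\io|\na\cep|^2$. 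Integrating in time from $0$ to $T$ and using the already established square-integrability of $\norm[L^2(\Om)]{\na\cep}$ yields a bound on $\int_0^T\io|\na\nep|^2$ independent of $\eta$ and $T$, whence $\na\nep$ is bounded in $L^2(\Om\times(0,\infty))$ uniformly. Weak compactness gives \eqref{con:nablan} along a subsequence, and the limit is identified as $\na n$ by Lemma \ref{lem:convnc}.

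For the nonlinear product \eqref{con:nSnablac}, I would combine the locally uniform convergence $(\nep,\cep)\to(n,c)$ from Lemma \ref{lem:convnc} with continuity of $S$ and $\rho_{\eta_j}\nearrow 1$ pointwise to conclude $\nep\Sep(\cdot,\nep,\cep)\to nS(\cdot,n,c)$ strongly in $L^2_{loc}(\Om\times(0,\infty))$; combined with the weak convergence $\na\cep\rightharpoonup\na c$ in $L^2$ from \eqref{con:nablac}, this yields weak convergence of the product in $L^1_{loc}(\Om\times(0,\infty))$.

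Finally, the convergences \eqref{con:nt}, \eqref{con:ct}, \eqref{con:ut} of the time derivatives are obtained by reading them off from the equations. Testing the equation for $\nep$ against $\psi\in W_0^{1,2}(\Om)$, integration by parts and the $L^\infty$-bounds on $\nep$, $|\Sep|$, together with the $L^2(\Om\times(0,\infty))$ bounds on $\na\nep$, $\na\cep$, $\uep$, furnish a uniform estimate
\[
 \norm[(W_0^{1,2}(\Om))^*]{n_{\eta t}(\cdot,t)}\le\norm[L^2(\Om)]{\na\nep}+C_S\norm[\Lin]{\nep}\norm[L^2(\Om)]{\na\cep}+\norm[\Lin]{\nep}\norm[L^2(\Om)]{\uep},
\]
which is square-integrable in time uniformly in $\eta$; analogous bounds hold for $c_{\eta t}$ (using the $nc$ term) and for $u_{\eta t}$ (using the $L^4\times L^2$ control of $(\uep\cdot\na)\uep$ via the $L^\infty$- and $L^2$-bounds on $\uep$, $\na\uep$, together with the $L^2$ bound on $\nep\na\Phi$). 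Weak compactness yields \eqref{con:nt}--\eqref{con:ut}, with the limits identified via the strong convergences in Lemmas \ref{lem:convnc}, \ref{lem:convu}. The main subtlety throughout is keeping track of which convergence has been established when identifying limits, and the main analytical step is the energy estimate giving the uniform $L^2$-bound on $\na\nep$.
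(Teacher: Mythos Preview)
Your proposal is correct and follows essentially the same approach as the paper's proof: uniform bounds from Lemma \ref{lem:approx_existence} yield the weak-$*$ convergences \eqref{con:nacwstar}, \eqref{con:nablac}, \eqref{con:uDbeta} via Banach--Alaoglu; the key energy estimate for $\na\nep$ is obtained by testing the first equation with $\nep$ (you use $\nep-\nbar_0$, which is a harmless variation) and absorbing the cross term via Young's inequality; the product convergence \eqref{con:nSnablac} follows from strong $L^2_{loc}$ convergence of $\nep\Sep(\cdot,\nep,\cep)$ (by dominated convergence) combined with weak $L^2$ convergence of $\na\cep$; and the time-derivative bounds \eqref{con:nt}--\eqref{con:ut} come from duality testing exactly as you outline. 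The only cosmetic differences are your choice of $\nep-\nbar_0$ as test function and your slightly more explicit handling of $(\uep\cdot\na)\uep$.
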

\begin{proof}
%In this proof a typical diagonal argument is hidden. (Take p\in \N and subsequently extract subsequences. Then apply diagonal argument
From \eqref{eq:expdecayeta} we know that there is $c_1>0$ such that for all $\eta\in(0,1)$
\[
 \norm[L^\infty((0,\infty),L^{q_0}(\Om))]{\na\cep}\leq {c_1}.
\]
%Because $L^\infty((0,\infty),L^p(\Om))=(L^1((0,\infty), L^{\frac{p}{p-1}}(\Om)))^* is the dual of a separable Banach space,
Therefore we may conclude the existence of a sequence satisfying \eqref{con:nacwstar}; this also entails \eqref{con:nablac}.\\
By the same reasoning
%taking into account that $D(A^\beta)=[X,D(A)]_\beta$ is separable due to the density of smooth functions in both X and D(A^\beta) (as a consequence of e.g. Lunardi, interpolation theory, Cor. 2.8) and reflexive, because (see Lunardi p 55, referring to a work of Calderon) if $X\cap Y$ is dense in X and Y and one of X or Y is reflexive, then ([X,Y]_\beta)'=[X',Y']_\beta.
we can use the $\eta-$independent bound on $\norm[L^\infty((0,\infty),D(A^\beta))]{\uep}$ given by \eqref{eq:decayuetaDbeta} to extract a subsequence satisfying \eqref{con:uDbeta}.\\
Concering convergence of $\na\nep$, we multiply the first equation {of \eqref{eta}} by $\nep$ so as to obtain
\begin{align}\nn
\frac{1}{2}\frac{d}{dt}\intO n_\eta^2+\intO |\nabla n_\eta|^2 &=\intO n_\eta S_\eta{\nabla c_\eta\cdot\nabla n_\eta}%\\\nn
\le \frac{1}{2}\intO|\nabla n_\eta|^2+\frac12\norm[L^\infty((0,\infty)\times\Om)]{n_\eta}^2C_S^2\intO|\nabla c_\eta|^2.
\end{align}
for any $\eta\in(0,1)$ and on the whole time-interval $(0,\infty)$. Integrating this with respect to time and taking into account the exponential bound on $\io|\na\cep|^2$ and the uniform $L^\infty$-bound on $\nep$ from \eqref{eq:expdecayeta}
, we establish that
\begin{equation}
 \label{est:nablan}
 \sup_{\eta\in\{\eta_j\}} \int_0^\infty\intO |\nabla n_\eta|^2<\infty
\end{equation}
and hence can find a subsequence of the previously extracted sequence $\{\eta_j\}_{j\in\N}$ along which \eqref{con:nablan} holds.\\
Because by Lemma \ref{lem:convnc}, $\nep\to n$ and $\Sep(\cdot,\nep,\cep)\to S(\cdot,n,c)$ pointwise and $\nep$ and $\Sep(\cdot,\nep,\cep)$ both are bounded uniformly in $\eta$ due to \eqref{eq:expdecayeta} %bound on $\nep$
and \eqref{eq:condS} combined with \eqref{rho}%estimate S\leq C_S and \eqref{} \rho\leq 1
, from Lebesgue's dominated convergence theorem we conclude that $\nep\Sep(\cdot,\nep,\cep)\to nS(\cdot,n,c)$ in $L^2_{loc}(\Om\times(0,\infty))$. Combined with \eqref{con:nablac}, this gives \eqref{con:nSnablac}.
Turning our attention to the time derivatives, we let $\psi\in C_0^\infty(\Om)$ with $\norm[W^{1,2}(\Om)]{\psi}\leq 1$ and test the first equation of \eqref{eta} with $\psi$. We obtain
\begin{align}\nn
\left\lvert \intO (n_\eta)_t\psi\right\rvert &=\left\lvert -\intO \nabla\nep\cdot\nabla \psi+\intO\nep\Sep\nabla\cep\cdot\nabla\psi+\intO\nep\uep\cdot\nabla\psi\right\rvert\\\nn
&\le \left(\left(\intO |\nabla\nep|^2\right)^\frac12+\norm[\Lin]{\nep}C_S\left(\intO |\nabla \cep|^2\right)^\frac12
+\norm[\Lin]{\nep}\left(\intO |u|^2\right)^\frac12\right)\left(\intO |\nabla\psi|^2\right)^\frac12
\end{align}
for all $t\in(0,\infty)$, $\eta\in(0,1)$. From the definition of the norm in dual spaces and Young's inequality, we derive that
\[
 \int_0^\infty \norm[(W_0^{1,2}(\Om))^*]{n_{\eta t}}^2\leq {3}\int_0^\infty \io |\na\nep|^2+{3\norm[L^\infty(\Om\times(0,\infty))]{\nep}^2C_S^2\int_0^\infty\io |\na\cep|^2+ 3} \norm[L^\infty(\Om\times(0,\infty))]{\nep}{^2}\int_0^\infty\io|\uep|^2
\]
for all $\eta\in(0,1)$. Taking into account \eqref{est:nablan} and \eqref{eq:expdecayeta}%exp decay , bdness of $\na\cep$ and $\uep$}
, we thus obtain {$c_2>0$} such that
\[
 \norm[L^2((0,\infty),(W_0^{1,2}(\Om))^*)]{n_{\eta t}}\leq c_{{2}}, \qquad \mbox{ for all }\eta\in(0,1),
\]
and may extract a further subsequence such that \eqref{con:nt} holds. The same reasoning applied to the second equation of \eqref{eta} leads to \eqref{con:ct}.
As to the third equation, employing \eqref{eq:expdecayeta} and \eqref{eq:decayuetaDbeta} and repeating the procedure with some $\psi\in C_0^\infty(\Om)$, we easily obtain uniform boundedness of $\int_0^\infty \norm[(W^{1,2}_{0,\sigma}(\Om))^*]{u_{\eta t}}^2$ (where $W^{1,2}_{\sigma,0}(\Om)= \overline{C_{0,\sigma}^{{\infty}}(\Om)}^{\norm[W^{1,2}(\Om)]{\cdot}}$) and may conclude \eqref{con:ut} along a subsequence.
\end{proof}

\begin{lem}\label{lem:isweaksol}
 The functions $n, c, u$ from Lemma \ref{lem:convnc} and Lemma \ref{lem:convu} form a weak solution to \eqref{0} in the sense of Definition \ref{def:1}.
\end{lem}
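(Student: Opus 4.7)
The natural approach is to start from the weak formulations of the approximate system \eqref{eta} satisfied by each classical solution $(\nep,\cep,\uep)$, and pass to the limit $\eta=\eta_j\searrow 0$ term by term, using the convergence results collected in Lemmata \ref{lem:convnc}, \ref{lem:convu} and \ref{lem:convderivatives}. Since each $(\nep,\cep,\uep)$ is a classical solution, for any test functions $\psi\in C_0^\infty(\Ombar\times[0,\infty))$ and $\Psi\in C_{0,\sigma}^\infty(\Om\times[0,\infty))$, multiplying the equations in \eqref{eta} by $\psi$ or $\Psi$ and integrating by parts (the boundary terms vanish by the Neumann condition on $\nep$, $\cep$ and the Dirichlet condition on $\uep$) yields identities of the exact form in Definition \ref{def:1}, but with $S$ replaced by $\Sep=\rho_\eta S$.

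For the linear terms, the passage to the limit is straightforward. The time-derivative terms $-\int_0^\infty\!\!\io \nep\psi_t$, $-\int_0^\infty\!\!\io \cep\psi_t$ and $-\int_0^\infty\!\!\io\uep\cdot\Psi_t$ converge to their counterparts thanks to the locally uniform convergence \eqref{con:n}, \eqref{con:c}, \eqref{con:u} and the compact support of $\psi,\Psi$. The diffusion terms $\int_0^\infty\!\!\io\na\nep\cdot\na\psi$, $\int_0^\infty\!\!\io\na\cep\cdot\na\psi$, $\int_0^\infty\!\!\io\na\uep\cdot\na\Psi$ pass to the limit by the weak convergences \eqref{con:nablan}, \eqref{con:nablac}, \eqref{con:uDbeta} (the last combined with $D(A^\beta)\embeddedinto W^{1,2}(\Om)$ via Lemma \ref{lem:stokes}(iv)). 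The transport terms $\int_0^\infty\!\!\io \nep\uep\cdot\na\psi$ and $\int_0^\infty\!\!\io \cep\uep\cdot\na\psi$ converge because $\nep\to n$, $\cep\to c$ and $\uep\to u$ uniformly on the compact support of $\psi$; the reaction term $\int_0^\infty\!\!\io\nep\cep\psi$ likewise. For the Navier-Stokes nonlinearity $\int_0^\infty\!\!\io(\uep\cdot\na)\uep\cdot\Psi$, locally uniform convergence of $\uep$ and $\na\uep$ on $\supp\Psi$ from \eqref{con:u} is enough, and for $\int_0^\infty\!\!\io\nep\na\Phi\cdot\Psi$ the uniform convergence \eqref{con:n} suffices.

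The only term requiring care is the rotational chemotactic flux $\int_0^\infty\!\!\io \nep\Sep(x,\nep,\cep)\na\cep\cdot\na\psi$. Writing $\Sep=\rho_\eta S$ with $\rho_\eta$ from \eqref{rho}, I would first observe that on $\supp\psi$ the product $\rho_\eta\na\psi\to\na\psi$ uniformly and is bounded independently of $\eta$. Combined with the locally weak convergence \eqref{con:nSnablac} of $\nep S(\cdot,\nep,\cep)\na\cep$ to $nS(\cdot,n,c)\na c$ in $L^1_{loc}(\Om\times(0,\infty))$, this yields
\[
\int_0^\infty\!\!\io \nep\Sep(x,\nep,\cep)\na\cep\cdot\na\psi\longrightarrow\int_0^\infty\!\!\io nS(x,n,c)\na c\cdot\na\psi,
\]
as $\eta=\eta_j\searrow 0$. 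I expect this step to be the principal (if modest) obstacle: one must justify that the bounded, pointwise-convergent multiplier $\rho_\eta\na\psi$ can indeed be inserted in the weak-$L^1$ limit. This is a standard Lebesgue-type argument, but it is the only place where the approximation $\Sep\to S$ enters nontrivially.

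Putting all term-wise limits together, the triple $(n,c,u)$ satisfies \eqref{weaksol} and its two companion identities; the required regularity $n,c\in L^2_{loc}([0,\infty),W^{1,2}(\Om))$, $u\in L^2_{loc}([0,\infty),W_{0,\sigma}^{1,2}(\Om))$ follows from weak lower semicontinuity applied to the bounds already established in Lemma \ref{lem:convderivatives} (in particular \eqref{est:nablan} and the $D(A^\beta)$-bound \eqref{con:uDbeta}).
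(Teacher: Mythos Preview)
Your approach is correct and matches the paper's (very brief) proof, which simply cites the convergence statements \eqref{con:n}, \eqref{con:nablan}, \eqref{con:nSnablac}, \eqref{con:u}, \eqref{con:c}, \eqref{con:nablac} and declares the passage to the limit accomplished. Your term-by-term discussion is a faithful expansion of that one-line argument.

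One small slip to correct: your claim that $\rho_\eta\na\psi\to\na\psi$ \emph{uniformly} on $\supp\psi$ is false in general, because $\psi\in C_0^\infty(\Ombar\times[0,\infty))$ may have $\na\psi\neq 0$ on $\partial\Omega$, where $\rho_\eta\equiv 0$ for every $\eta$. What you do have is bounded pointwise convergence, hence (by dominated convergence) strong convergence in $L^2(\supp\psi)$. This is enough, because the sequence $\nep S(\cdot,\nep,\cep)\na\cep$ is bounded in $L^2_{loc}$ (the factor $\nep S$ is bounded in $L^\infty$ by \eqref{eq:expdecayeta} and \eqref{eq:condS}, and $\na\cep$ in $L^2$ by \eqref{eq:expdecayeta}), so \eqref{con:nSnablac} actually holds weakly in $L^2_{loc}$ as well, and weak~$L^2$ paired with strong~$L^2$ gives the desired limit. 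Even simpler: the proof of \eqref{con:nSnablac} in Lemma~\ref{lem:convderivatives} in fact establishes $\nep\Sep(\cdot,\nep,\cep)\to nS(\cdot,n,c)$ strongly in $L^2_{loc}$ (the $\rho_\eta$ is already absorbed there), so combined with \eqref{con:nablac} you get $\nep\Sep(\cdot,\nep,\cep)\na\cep\rightharpoonup nS(\cdot,n,c)\na c$ in $L^1_{loc}$ directly, and no separate handling of $\rho_\eta$ is needed at all.
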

\begin{proof}
 The convergence properties exhibited in \eqref{con:n}, \eqref{con:nablan}, \eqref{con:nSnablac}, \eqref{con:u}, \eqref{con:c} and \eqref{con:nablac}  enable us to pass to the limit in the integral identities \eqref{weaksol} for $(\nep,\cep,\uep)$ {for any $\varphi\in C_0^\infty( \Ombar\times[0,\infty))$}
\end{proof}

Moreover, these weak solutions obey the desired decay estimates.
\begin{lem}\label{lem:decay}

{With $C_8, C_9, C_{10}$ and $C_{11}$ as in Lemma \ref{lem:approx_existence},} the functions $n,c,u$ obtained from Lemma \ref{lem:convnc} and Lemma \ref{lem:convu} obey the estimates
\begin{align}
\label{eq:decayest_c}
\norm[W^{1,q_0}(\Om)]{c(\cdot,t)}&\leq {2C_{11}}e^{-\al_1 t}, &\qquad&\mbox{for almost every }t>0,\\
\label{eq:decayest_n}
\norm[L^\infty(\Om)]{n(\cdot,t){-\nbar_0}} &\leq {C_{10}}e^{-\al_1 t}, &\qquad &\mbox{for every } t>0,\\
\label{eq:decayest_u}
\norm[L^\infty(\Om)]{u(\cdot,t)}&\leq
{C_{9}}e^{-\al_2t}, &\qquad &\mbox{for every }t>0,\\
\label{eq:decayest_uDbeta}
\norm[D(A^\beta)]{u(\cdot,t)}&\leq {C_8} e^{-\al_2t}, &\qquad &\mbox{for almost every } t>0.
\end{align}
\end{lem}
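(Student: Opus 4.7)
The plan is to obtain each of the four estimates by passing to the limit $\eta=\eta_j\searrow0$ in the corresponding bounds stated in Lemma \ref{lem:approx_existence}, using the modes of convergence assembled in Lemma \ref{lem:convnc}, Lemma \ref{lem:convu} and Lemma \ref{lem:convderivatives}. The two pointwise-in-$t$ estimates \eqref{eq:decayest_n} and \eqref{eq:decayest_u} are the easy part and will be obtained from the local uniform (Hölder) convergence of $\nep$ and $\uep$; the two estimates \eqref{eq:decayest_c} and \eqref{eq:decayest_uDbeta}, which hold only for a.e.~$t$, will require a duality/lower-semicontinuity argument based on the weak-$\star$ convergences \eqref{con:nacwstar} and \eqref{con:uDbeta}.

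First, for \eqref{eq:decayest_n}: for any fixed $(x,t)\in\Ombar\times(0,\infty)$ the convergence $\nep \to n$ in $C^{\gamma,\gamma/2}_{loc}(\Ombar\times(0,\infty))$ established in Lemma \ref{lem:convnc} yields $\nep(x,t)\to n(x,t)$, so passing to the limit in the pointwise inequality $|\nep(x,t)-\nbar_0|\le C_{10}e^{-\al_1 t}$ from \eqref{eq:expdecayeta} gives $|n(x,t)-\nbar_0|\le C_{10}e^{-\al_1 t}$; since the bound holds for every $x\in\Ombar$, taking the supremum yields \eqref{eq:decayest_n} for every $t>0$. The same argument, using Lemma \ref{lem:convu} instead, gives \eqref{eq:decayest_u} for every $t>0$.

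For \eqref{eq:decayest_uDbeta}: given any $\phi\in L^2_\sigma(\Om)$ and any non-negative $\psi\in C_0^\infty(0,\infty)$, the weak-$\star$ convergence \eqref{con:uDbeta} together with the $\eta$-independent bound \eqref{eq:decayuetaDbeta} yields
\[
\left|\int_0^\infty\!\psi(t)\,\langle A^\beta u(\cdot,t),\phi\rangle_{L^2}\,dt\right|
=\lim_{j\to\infty}\left|\int_0^\infty\!\psi(t)\,\langle A^\beta u_{\eta_j}(\cdot,t),\phi\rangle_{L^2}\,dt\right|
\le C_8\norm[L^2(\Om)]{\phi}\int_0^\infty\!\psi(t)e^{-\al_2 t}\,dt.
\]
Since this holds for every such $\psi$, the Lebesgue differentiation theorem implies $|\langle A^\beta u(\cdot,t),\phi\rangle_{L^2}|\le C_8 e^{-\al_2 t}\norm[L^2(\Om)]{\phi}$ for a.e.~$t>0$ and every $\phi$ in a countable dense subset of $L^2_\sigma(\Om)$; taking the supremum over such $\phi$ and using duality gives \eqref{eq:decayest_uDbeta}. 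An entirely analogous duality argument, applied to \eqref{con:nacwstar} (with test functions in $L^{q_0'}(\Om)$), yields $\norm[L^{q_0}(\Om)]{\na c(\cdot,t)}\le C_{11}e^{-\al_1 t}$ for a.e.~$t>0$. Combining this with the pointwise bound $\norm[L^\infty(\Om)]{c(\cdot,t)}\le C_{11}e^{-\al_1 t}$ (inherited for every $t>0$ from the local uniform convergence $\cep\to c$ in Lemma \ref{lem:convnc} and the $L^\infty$-estimate built into $\norm[W^{1,q_0}]{\cep}\le C_{11}e^{-\al_1 t}$, via the embedding $W^{1,q_0}\embeddedinto L^\infty$ for $q_0>N$) and adding the two $L^{q_0}$-norms yields \eqref{eq:decayest_c}.

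The main subtlety to be careful about is the passage from a weak-$\star$ integral bound to a pointwise-in-$t$ bound on the norm; this is where the ``a.e.~$t$'' qualification in \eqref{eq:decayest_c} and \eqref{eq:decayest_uDbeta} comes from, and it hinges on choosing the test class densely and on invoking Lebesgue differentiation. Everything else is a routine transfer of the uniform-in-$\eta$ estimates through modes of convergence that have already been prepared, so no new analytic input is needed.
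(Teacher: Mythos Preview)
Your proof is correct and follows essentially the same strategy as the paper: the pointwise estimates \eqref{eq:decayest_n} and \eqref{eq:decayest_u} come from the locally uniform convergence in Lemmata \ref{lem:convnc} and \ref{lem:convu}, while \eqref{eq:decayest_c} and \eqref{eq:decayest_uDbeta} are obtained by transferring the uniform-in-$\eta$ bounds through the weak-$\star$ convergences \eqref{con:nacwstar} and \eqref{con:uDbeta}. The only technical difference is the device used to extract an a.e.-in-$t$ norm bound from weak-$\star$ convergence: the paper multiplies by the characteristic function $\chi_{[t,\infty)}$ and invokes weak-$\star$ lower semicontinuity of the $L^\infty((0,\infty);X)$-norm, whereas you test against separable products $\psi(t)\phi(x)$ and use Lebesgue differentiation together with a countable dense family. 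Both routes are standard and yield the same conclusion.

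One small imprecision: in your argument for \eqref{eq:decayest_c} you obtain $\norm[L^\infty(\Om)]{c(\cdot,t)}\le C_{11}e^{-\al_1 t}$ via the embedding $W^{1,q_0}(\Om)\hookrightarrow L^\infty(\Om)$, but this introduces an embedding constant that need not equal $1$, so the stated bound with exactly $C_{11}$ is not justified that way. The cleaner (and sufficient) route is to observe directly that $\norm[L^{q_0}(\Om)]{\cep(\cdot,t)}\le\norm[W^{1,q_0}(\Om)]{\cep(\cdot,t)}\le C_{11}e^{-\al_1 t}$, pass to the limit using the locally uniform convergence of $\cep$, and then add the resulting $L^{q_0}$-bound on $c$ to the $L^{q_0}$-bound on $\na c$ to arrive at the factor $2C_{11}$.
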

\begin{proof}
 The estimates \eqref{eq:decayest_n}, \eqref{eq:decayest_u} and a corresponding estimate for $\norm[L^\infty(\Om)]{c(\cdot,t)}$ result from \eqref{eq:expdecayeta} %(estimates for $\nep,\cep,\uep$
 and the pointwise convergence entailed by Lemma \ref{lem:convnc} and Lemma \ref{lem:convu}.
 For $t>0$ we let $\chi_{[t,\infty)}$ denote the characteristic function of the interval $[t,\infty)$ and observe that due to \eqref{con:nacwstar} also $\chi_{[t,\infty)}\na\cep \wstarto \chi_{[t,\infty)}\na c$ in $L^\infty((0,\infty),L^{q_0}(\Om))$ as $\eta=\eta_j\searrow 0$, and therefore
\[
 \normm{L^\infty([t,\infty),L^{q_0}(\Om))}{\na c}=\norm[L^\infty((0,\infty),L^{q_0}(\Om))]{\chi_{[t,\infty)}\na c}\leq \liminf_{j\to\infty} \norm[L^\infty((0,\infty),L^{q_0}(\Om)]{\chi_{[t,\infty)}\na\cep}\leq C_{12}e^{-\al t}
\]
for all $t>0$, so that \eqref{eq:decayest_c} results.
The estimate \eqref{eq:decayest_uDbeta} follows from \eqref{con:uDbeta} and \eqref{eq:decayuetaDbeta} by the same reasoning.
\end{proof}

Naturally, in our search for classical solutions we are much more interested in obtaining smoothness of higher order than in these boundedness assertions.

\begin{lem}\label{lem:c2al}
 The functions $n,c,u$ from the previous lemmata satisfy
 \begin{align}
  n\in& C^{2+{ \gamma},1+\frac{ \gamma}2}_{loc}(\Ombar\times(0,\infty))\label{smooth:n},\\
  c\in& C^{2+{ \gamma},1+\frac{ \gamma}2}_{loc}(\Ombar\times(0,\infty))\label{smooth:c},\\
  u\in& C^{2+{ \gamma},1+\frac{ \gamma}2}_{loc}(\Ombar\times(0,\infty))\label{smooth:u},
 \end{align}
 for some ${ \gamma}>0$.
\end{lem}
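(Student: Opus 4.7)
The proof proceeds by a standard three-step bootstrap, starting from the H\"older bounds $n,c\in C^{\gamma,\gamma/2}_{loc}(\Ombar\times(0,\infty))$ and $u\in C^{1+\gamma,\gamma}_{loc}(\Ombar\times(0,\infty))$ obtained in Lemmata~\ref{lem:convnc} and \ref{lem:convu}, together with the uniform-in-time bound on $\nabla c$ in $L^{q_0}(\Om)$ provided by \eqref{eq:decayest_c} (which, via \eqref{con:nacwstar}, passes from the approximate solutions to the limit). The idea is to first upgrade $c$, then $u$, and finally $n$ by viewing the first equation as a linear parabolic problem with H\"older coefficients.

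For $c$, write the second equation as $c_t-\Delta c=-nc-u\cdot\nabla c$ with homogeneous Neumann data. Since $n,c,u\in L^\infty_{loc}(\Ombar\times(0,\infty))$ and $\nabla c\in L^\infty_{loc}((0,\infty),L^{q_0}(\Om))$ with $q_0>N$, the right-hand side lies in $L^{q_0}_{loc}$, so classical parabolic $L^p$-theory up to the boundary yields $c\in W^{2,1}_{q_0,loc}$; anisotropic Sobolev embedding (applicable because $q_0>N$) then gives $\nabla c\in C^{\gamma',\gamma'/2}_{loc}$ for some $\gamma'\in(0,1)$. With the right-hand side now H\"older continuous in space-time, parabolic Schauder estimates deliver \eqref{smooth:c}. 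For $u$, the forcing $n\nabla\Phi-(u\cdot\nabla)u$ of the third equation is H\"older continuous, thanks to $n\in C^{\gamma,\gamma/2}$, $u\in C^{1+\gamma,\gamma}$ and $\Phi\in C^{1+\delta}$, so Schauder theory for the nonstationary Stokes system under homogeneous Dirichlet conditions yields \eqref{smooth:u} together with a compatible pressure.

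Finally, for $n$, expanding the divergence and using the chain rule on $S(x,n,c)$ we rewrite the first equation in non-divergence form $n_t-\Delta n+b\cdot\nabla n+\kappa\, n=0$, with drift $b:=(S+n\partial_n S)\nabla c+u$ and $\kappa$ collecting the remaining lower-order contributions (involving $\nabla_x S$, $\partial_c S$, $\nabla c$ and $\nabla^2 c$). Once $c,u\in C^{2+\gamma,1+\gamma/2}_{loc}$ are available, using $S\in C^2$ and the H\"older continuity of $n$, both $b$ and $\kappa$ are H\"older continuous in space-time. The boundary condition $(\nabla n-nS\nabla c)\cdot\nu=0$ simplifies to the linear Robin-type condition $\partial_\nu n=h\,n$ on $\partial\Om\times(0,\infty)$ with $h:=(S\nabla c)\cdot\nu$ H\"older, and the standard Schauder theory for linear parabolic equations with such boundary data produces \eqref{smooth:n}. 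The main obstacle is precisely this final stage: in its original form the boundary condition for $n$ is nonlinear and couples all three unknowns, but once $c$ has been shown to be smooth it linearises, which is what makes the bootstrap close.
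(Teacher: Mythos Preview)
Your bootstrap strategy matches the paper's in spirit, but there is a real gap in the final step for $n$. Standard parabolic Schauder theory for the oblique derivative problem (e.g.\ \cite[Thm.~IV.5.3]{LSU}) requires the coefficient in the boundary operator to lie in $C^{1+\gamma,(1+\gamma)/2}$, not merely in $C^{\gamma,\gamma/2}$, if one wants the solution in $C^{2+\gamma,1+\gamma/2}$. Your Robin coefficient $h=(S(x,n,c)\nabla c)\cdot\nu$ still depends on $n$ through the second argument of $S$; since at this stage only $n\in C^{\gamma,\gamma/2}$ is known, the composition $S(\cdot,n,c)$---and hence $h$---is no better than $C^{\gamma,\gamma/2}$. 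So the boundary condition has \emph{not} fully linearised in the regularity sense you need, and ``standard Schauder'' does not directly yield \eqref{smooth:n}. The paper isolates precisely this obstruction and resolves it by first invoking Lieberman's gradient H\"older estimate for conormal problems \cite[Thm.~1.1]{Lieberman_gradient}, which under the weaker data at hand already gives $n\in C^{1+\gamma_2,(1+\gamma_2)/2}_{loc}$; only then does the boundary coefficient acquire the needed $C^{1+\gamma}$ regularity and Schauder applies. Your argument can be repaired by inserting the same intermediate step.

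Two smaller differences worth noting: for $c$, the paper absorbs $u\cdot\nabla$ into the parabolic operator (its coefficient $u$ being H\"older by Lemma~\ref{lem:convu}), so the right-hand side is H\"older from the start and your $L^p$--Sobolev detour is not needed; and the paper treats $n$ before $u$, though your order also works since only H\"older continuity of $n$ enters the Stokes forcing.
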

\begin{proof}
We fix $\tau>0$ and $T>3\tau$. Moreover we choose a smooth function $\xi\colon \R\to [0,1]$ such that $\xi(t)=0$ for $t\le 2\tau$ and $\xi(t)=1$ for all $t\ge 3\tau$.
Then we consider the problem
\[
 \begin{cases}
  \calL w=w_t - \Delta w + u\cdot \na w = -\xi c + \xi n +\xi_t c =: f& \mbox{on } (\tau,T)\\
  (\xi c)(\cdot,\tau)=0, \qquad \delny (\xi c)\big\rvert_{\dOm} =0
 \end{cases}
\]
% We only have {\bf local} Hölder continuity (of u), that is the reason for considering the problem on (\tau,T) and not on (0,T).
of which clearly $w=\xi c$ is a weak solution. The coefficients of the parabolic operator $\calL$ are H\"older-continuous in $\Ombar\times[\tau,T]$ by Lemma \ref{lem:convu} and so is $f$ (by Lemma \ref{lem:convnc}).
%For reference: In the cited theorem, the operator is of the form
%\(\calL\) w = w_t-a_{ij}w_{x_ix_j} +a_i u_{x_i} +au\)
%Because the initial data are $0$, compatibility conditions are satisfied as well.
If combined with the uniqueness result for weak solutions in \cite[Thm. III.5.1]{LSU}, Theorem IV.5.3 of \cite{LSU} therefore asserts that $\xi c\in C^{2+{ \gamma_1},1+\frac{ \gamma_1}2}(\Ombar\times[\tau,T])$ {for some $\gamma_1>0$} and we conclude that $c\in C^{2+{ \gamma_1},1+\frac{ \gamma_1}2}(\Ombar\times[3\tau,T])$ and finally \eqref{smooth:c}.\\ When attempting to apply the same theorem to $n$ (or $\xi n$, similar as before), however, we face the additional difficulty that it requires $C^{1+{ \gamma},\frac{1+{ \gamma}}2}$-regularity of the boundary values, whereas at this point we cannot guarantee more than $C^{{ \gamma},\frac{ \gamma}2}$-regularity because of the involvement of $n$ in the argument of $S$ in the boundary condition.
We apply \eqref{con:nablan} and \eqref{eq:decayest_n} %Lemma \ref{lem:nanlinl2}
%the existence result \cite[Thm III.4.1]{LSU} and uniqueness assertion \cite[Thm. III.3.3]{LSU} for weak solutions $w$ of $w_t=\nabla \cdot (\nabla w - w S(\cdot,n,c)\nabla c)$, $w(\cdot,0)=n_0$, $w\bdry=n\bdry$
to see that $n$ has the regularity properties needed for an application of \cite[Thm. 1.1]{Lieberman_gradient}, which
%in particular $t\mapsto \norm[L^2(\Om)]{\na n(\cdot,t)}$, $t\in(0,T)$ is bounded. \cite[Thm. 1.1]{Lieberman_gradient}
then guarantees that $n\in C^{1+{ \gamma_2},\frac{1+{ \gamma_2}}2}(\Ombar\times(0,T))$ {for some $\gamma_2>0$} and with that we can use \cite[Thm. IV.5.3]{LSU} in the same way as before and conclude \eqref{smooth:n}.\\
Turning our attention to the function $u$ we observe that $\xi {(u\cdot\nabla)} u+ \xi n\nabla\Phi + \xi'u \in C^{{ \gamma_3},\frac{ \gamma_3}2}(\Ombar\times(0,T))$ { for some $\gamma_3>0$ by Lemma \ref{lem:convu} and \eqref{smooth:n}} and hence the same holds true for $\calP  (\xi {(u\cdot\nabla) u}+ \xi n\nabla\Phi + \xi' u)$ by Lemma \ref{lem:projection_smoothness}. Therefore the Schauder estimates for Stokes' equation given in \cite[Thm. 1.1]{Solonnikov}{, if combined with the uniqueness result in \cite[Thm. V.1.5.1]{sohr_book},} assert that $\xi u$, being a solution to $(\xi u)_t = \Delta (\xi u)+\calP  [\xi (u{\cdot}\na)u + \xi n\na\Phi + \xi'u]$, $\nabla\cdot(\xi u)=0$, belongs to the space $C^{2+\gamma_3,1+\frac{\gamma_3}2}(\Ombar\times(0,T))$ for some $\gamma_3>0$ and hence $u\in C^{2+\gamma_3,1+\frac{\gamma_3}2}(\Ombar\times[3\tau,T])$, so that we finally arrive at \eqref{smooth:u}.
\end{proof}

Having obtained this smoothness, we can quickly fill in the missing information to see that $n, c, u$ are as regular as required of classical solutions.

\begin{lem}\label{lem:regularity}
 The functions $n,c,u$ satisfy
\begin{equation}\label{eq:regstatement}
\left\{
\begin{array}{llc}
n\in C^0(\Ombar\times[0,T))\cap C^{2,1}(\Ombar\times(0,T)),\\[6pt]
c\in C^0(\Ombar\times[0,T))\cap L^\infty((0,T);W^{1,q_0}(\Om))\cap C^{2,1}(\Ombar\times(0,T)),\\[6pt]
u\in C^0(\Ombar\times[0,T))\cap L^\infty((0,T);D(A^\beta))\cap C^{2,1}(\Ombar\times(0,T)).
\end{array}
\right.
\end{equation}
%with $q_0$, $\beta$ as fixed in \eqref{eq:p0q0} and \eqref{eq:choosebeta}.
\end{lem}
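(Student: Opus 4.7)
The $C^{2,1}(\Ombar\times(0,T))$-regularity for all three components is already contained in Lemma \ref{lem:c2al}, while the bounds $c\in L^\infty((0,T);W^{1,q_0}(\Om))$ and $u\in L^\infty((0,T);D(A^\beta))$ are immediate consequences of \eqref{eq:decayest_c} and \eqref{eq:decayest_uDbeta} from Lemma \ref{lem:decay}. Accordingly, the only point in \eqref{eq:regstatement} that requires additional reasoning is membership of $n$, $c$, $u$ in $C^0(\Ombar\times[0,T))$---i.e., continuity up to the initial time.

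My plan is to pass to the limit $\eta=\eta_j\searrow 0$ in the variation-of-constants identities \eqref{eq:varofconst_n}--\eqref{eq:varofconst_u}, which the approximate solutions $(\nep,\cep,\uep)$ from Lemma \ref{lem:approx_existence} satisfy on every interval $(0,t)$. The local strong convergences collected in Lemmas \ref{lem:convnc}, \ref{lem:convu} and \ref{lem:convderivatives}, together with the uniform bounds \eqref{eq:expdecayeta}, \eqref{eq:decayuetaDbeta} and \eqref{eq:decaynaceta} and the semigroup estimates of Lemma \ref{lem:heat} and Lemma \ref{lem:stokes}, will supply dominating functions for Lebesgue's theorem and thereby yield the analogous representations for $(n,c,u)$, e.g.
\[
 n(\cdot,t) = e^{t\Delta}n_0 - \intnt e^{(t-s)\Delta}\bigl[\na\cdot(nS(\cdot,n,c)\na c) + \na\cdot(nu)\bigr](\cdot,s)\,ds,\qquad t\in(0,T),
\]
where I exploit $\na\cdot u=0$ to cast the convective term $u\cdot\na n$ as a divergence and thus avoid having to control $\na n$ in $L^\infty$; the analogous rewriting applies to $c$.

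From these representations the continuity on $\Ombar\times[0,T)$ is standard. Indeed, since $n_0\in C^0(\Ombar)$ and, by Sobolev/Bessel-potential embedding (using $q_0>N$ and $\beta>N/4$), also $c_0\in C^0(\Ombar)$ and $u_0\in C^0(\Ombar)$, the free evolutions $e^{t\Delta}n_0$, $e^{t\Delta}c_0$ and $e^{-tA}u_0$ are continuous on $\Ombar\times[0,\infty)$ and attain the prescribed initial data. The Duhamel contributions can be bounded in $L^\infty(\Om)$ by integrals of the form $C\intnt(1+(t-s)^{-\frac12-\frac{N}{2q_0}})\,ds$ (for the $n$- and $c$-components, via Lemma \ref{lem:heat}(iv)) and analogous Stokes-type integrals for $u$; since $q_0>N$ and $\beta>N/4$ render these singularities integrable, each such integral is finite and tends to $0$ as $t\to 0^+$. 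The main technical obstacle---and essentially the only one---is the justification of dominated convergence under the singular time-integrals appearing in \eqref{eq:varofconst_n}--\eqref{eq:varofconst_u}; once that is carried out using the uniform bounds from Lemma \ref{lem:approx_existence} and the integral estimates of Section \ref{sec:constandparam}, the remaining assertions of Lemma \ref{lem:regularity} follow routinely.
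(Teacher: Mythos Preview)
Your strategy is different from the paper's and, as written, has a gap in the $n$-component. For $c$ and $u$ your plan works: the Duhamel integrands in \eqref{eq:varofconst_c}--\eqref{eq:varofconst_u} (after your divergence rewriting) involve only $n_\eta$, $c_\eta$, $u_\eta$, $\na u_\eta$, all of which converge \emph{strongly} in $C^{\gamma}_{loc}$ by Lemmas~\ref{lem:convnc} and~\ref{lem:convu}, so dominated convergence in $s$ is justified. For the $n$-formula, however, the integrand contains $\na c_\eta$, and the lemmas you cite give only the \emph{weak} convergences \eqref{con:nacwstar}, \eqref{con:nablac}, \eqref{con:nSnablac}. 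Lebesgue's theorem needs a.e.\ convergence of the integrand, which you do not have; you would first need uniform-in-$\eta$ $C^{1+\gamma}$-bounds on $c_\eta$ (obtainable by running the Schauder argument of Lemma~\ref{lem:c2al} uniformly on the approximants), or else a weak-convergence/kernel argument that you have not indicated.

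The paper avoids this issue by taking a shorter route. For $c$ and $u$ it never touches Duhamel: it combines $c\in L^\infty((0,T);W^{1,q_0}(\Om))$ and $u\in L^\infty((0,T);D(A^\beta))$ from Lemma~\ref{lem:decay} with the time-derivative bounds \eqref{con:ct}, \eqref{con:ut} and the compact embeddings $W^{1,q_0}(\Om)\embeddedinto\embeddedinto C^0(\Ombar)$, $D(A^\beta)\embeddedinto\embeddedinto C^0(\Ombar)$, and invokes Aubin--Lions--Simon (\cite[Cor.~8.4]{Simon}) to get $c,u\in C^0(\Ombar\times[0,T])$ directly. For $n$ it does not establish Duhamel for the limit at all: from the uniform bound \eqref{eq:expdecayeta} on $\norm[L^{q_0}(\Om)]{n_\eta S_\eta\na c_\eta+n_\eta u_\eta}$ it gets $\norm[\Lin]{n_\eta(\cdot,t)-e^{t\Delta}n_0}\leq c_1k_4\bigl(t+\int_0^t s^{-\frac12-\frac{N}{2q_0}}\,ds\bigr)$ \emph{uniformly in $\eta$}, and then concludes $\norm[\Lin]{n(\cdot,t)-n_0}<\zeta$ by an $\eps/3$ argument splitting through $n_{\eta_j}(\cdot,t)$ at a fixed $t$ (using only the uniform convergence $n_{\eta_j}(\cdot,t)\to n(\cdot,t)$ from Lemma~\ref{lem:convnc}). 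This needs nothing about $\na c_\eta$ beyond the uniform $L^{q_0}$-bound, which is why it is more economical than your plan.
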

\begin{proof}
 For each of the functions, $C^{2,1}$-regularity follows from Lemma \ref{lem:c2al}. That %$n\in L^\infty((0,\infty),C^0({\overline{\Om}}))$,
$c\in L^\infty((0,\infty),W^{1,q_0}(\Om))$ and $u\in L^\infty((0,\infty),D(A^\beta))$ is asserted by %\eqref{con:n} together with \eqref{eq:decayest_n}, and by
 \eqref{eq:decayest_c} and \eqref{eq:decayest_u}, respectively. \\
 Therefore we are left with the task of proving the continuity at $t=0$.
From \eqref{eq:decayest_c} and \eqref{con:ct} we know that for $T>0$ we have $c\in L^\infty((0,T),{W^{1,q_0}(\Om)})$ and $c_t \in L^2((0,T),(W_0^{1,2}(\Om))^*)$, where $W^{1,q_0}(\Om)\embeddedinto\embeddedinto C^0(\Ombar)\embeddedinto (W^{1,2}_0(\Om))^*$, so that a well-known embedding result (see e.g. \cite[Cor. 8.4]{Simon}) assures us that $c\in C^0(\Ombar\times [0,T])$.
For $u$ we observe that $D(A^\beta)\embeddedinto\embeddedinto C^0(\Ombar)$ and \eqref{con:ut} and \eqref{eq:decayest_uDbeta} once more make \cite[Cor. 8.4]{Simon} applicable. In order to show continuity of $n$ at $t=0$, we note that according to \eqref{eq:expdecayeta}, there is $c_1>0$ such that $\norm[L^{q_0}(\Om)]{\nep S(\cdot,\nep,\cep)\na\cep - \uep\nep}\leq c_1$ for any $\eta\in(0,1)$ and any $t>0$. Consequently, for any $\eta\in(0,1)$ and any $t>0$, we have
\begin{align*}
 &\norm[L^\infty(\Om)]{\nep(\cdot,t)-e^{t\Delta}n_0} \\
&\quad \leq \intnt \norm[\Lin]{e^{(t-s)\Delta}\nabla\cdot\left(\nep S(\cdot, \nep(\cdot,s),\cep(\cdot,s))\na\cep(\cdot,s)+\nep(\cdot,s)\uep(\cdot,s)\right)} ds \\
 &\quad \leq \intnt k_4(1+(t-s)^{-\frac12-\frac{N}{2q_0}})e^{-\lambda_1(t-s)} \norm[L^{q_0}(\Om)]{\nep(\cdot,s)S(\cdot,\nep(\cdot,s),\cep(\cdot,s))\na\cep(\cdot,s)+\nep(\cdot,s)\uep(\cdot,s)} ds\\
 &\quad \leq c_1k_4 \left(t+\intnt s^{-\frac12-\frac{N}{2q_0}} ds\right).
\end{align*}
Given $\zeta>0$ we then fix $\delta>0$ such that $\norm[L^\infty(\Om)]{e^{t\Delta} n_0 - n_0}\leq \frac\zeta3$ and $t+\int_0^t s^{-\frac12-\frac N{2q_0}}ds<\frac\zeta {3c_1k_4}$ for all $t\in(0,\delta)$. Then using the uniform convergence $n_{\eta_j}(\cdot,t)\to n(\cdot,t)$ as $j\to\infty$ asserted by Lemma \ref{lem:convnc} we pick $\eta_j$ such that $\norm[L^\infty(\Om)]{{n}(\cdot,t)-n_{\eta_j}(\cdot,t)}\leq \frac\zeta3$. Then
\[
 \norm[\Lin]{n(\cdot,t)-n_0}\leq \norm[\Lin]{n(\cdot,t)-\nep(\cdot,t)} + \norm[\Lin]{\nep(\cdot,t)-e^{t\Delta}n_0}+\norm[\Lin]{e^{t\Delta} n_0 - n_0}<\zeta
\]
for all $t\in(0,\delta)$. Thus the proof is complete.
\end{proof}

In order to prove Theorem \ref{thm:main}, we now only have to collect the results prepared during this section:

\begin{proof}[Proof of Theorem \ref{thm:main}]
 Approximating $S$ by functions $\Sep$ as indicated in \eqref{eq:Seta}, Proposition \ref{prop:Szero} has ensured the existence of solutions $(\nep,\cep,\uep,P_\eta)$ with the properties asserted in Lemma \ref{lem:approx_existence}. From the family of these approximate solutions, in Lemma \ref{lem:convnc}, Lemma \ref{lem:convu} and Lemma \ref{lem:convderivatives} we were able to extract a subsequence that converges to functions $(n,c,u)$ in a suitable sense, which according to Lemma \ref{lem:isweaksol} form a global weak solution to \eqref{0} in the sense of Definition \ref{def:1}, according to Lemma \ref{lem:regularity} have all regularity properties required of a classical solution and by Lemma \ref{lem:decay} exhibits the desired decay properties. The missing component $P$ can be obtained from \cite[Thm. V.1.8.1]{sohr_book}. In light of the smoothness of $u$, $n$, $\Phi$, the third equation of \eqref{0} asserts that $\nabla P\in C^0(\Ombar\times(0,T))$.
\end{proof}

\renewcommand\thesection{\Alph{section}}
\setcounter{section}{0}
\section{Appendix}\label{appendix}
We have postponed the proof of Lemma \ref{lem:integralestimates}, which mainly consists in elementary calculus, but is too central to the reasoning of the present work to be left unproven. We begin the Appendix by giving this proof. After that, we will take care of a result on the Helmholtz projection, which was used as tool in the proof of Lemma \ref{lem:c2al}. Finally, this appendix contains a variant of Lemma \ref{lem:chooseM} adapted to the needs of the proof of Theorem \ref{thm:alternative}.

\begin{proof}[Proof of Lemma \ref{lem:integralestimates}]
The assertion can be proven similar as in \cite[Lemma 1.2]{W4}. A simple observation shows that% (if one corrects the mistake made there)
\begin{align}\label{eq:lemintestimates_firsteq}
 &\intnt (1+s^{-\al})(1+(t-s)^{-\beta})e^{-\delta(t-s)}e^{-\gamma s} ds\nonumber
 \leq e^{-\delta t}\intnt e^{-(\gamma-\delta)s}ds +e^{-\delta t}\intnt s^{-\al} e^{-(\gamma-\delta)s}ds\nonumber\\
 &\qquad +e^{-\delta t}\intnt (t-s)^{-\beta} e^{-(\gamma-\delta)s}ds +e^{-\delta t}\intnt s^{-\al}(t-s)^{-\beta}e^{-(\gamma-\delta)s} ds.
\end{align}
In order to obtain estimates for the summands, independently of the values of $\al, \beta, \gamma, \delta$, we can start with
\begin{align*}
\intnt e^{-(\gamma-\delta)s}ds = \frac1{\gamma-\delta}[1-e^{-(\gamma-\delta)t}]\leq \frac1\eta
\end{align*}
and continue by estimating
\begin{align*}
\intnt s^{-\al} e^{-(\gamma-\delta)s}ds\leq\int_0^1s^{-\al}ds+\int_1^\infty e^{-(\gamma-\delta)s} ds \leq \frac1{1-\al}+\frac1{\gamma-\delta}\leq \frac2\eta.
\end{align*}

Also in the third term on the right hand side of \eqref{eq:lemintestimates_firsteq} we can split the integral and use the obvious estimates $(t-s)^{-\beta}\leq 1$ for $s<t-1$ and $e^{-(\gamma-\delta)(t-\sigma)}\leq e^{-(\gamma-\delta)(-\sigma)}{\leq}e^{\gamma-\delta}$ for $\sigma\in(0,1)$ to obtain
\begin{align}\nn
\intnt (t-s)^{-\beta} e^{-(\gamma-\delta)s}ds&\leq \intnt e^{-(\gamma-\delta)s} ds +\int_0^1 \sigma^{-\beta}e^{-(\gamma-\delta)(t-\sigma)}d\sigma\leq \frac{1}{\gamma-\delta}+\frac1{1-\beta}e^{\gamma-\delta}
\leq \frac{1}{\eta}+\frac1{\eta}e^{\frac1\eta}.
\end{align}
The last integral can be rewritten as
\begin{equation}\label{eq:integralrewritten}
\intnt s^{-\al}(t-s)^{-\beta}e^{-(\gamma-\delta)s} ds = t^{1-\al-\beta}\int_0^1 \sigma^{-\al}(1-\sigma)^{-\beta}e^{-(\gamma-\delta)\sigma t} d\sigma,
\end{equation}
where we have
\[
 \int_0^1 \sigma^{-\al}(1-\sigma)^{-\beta}e^{-(\gamma-\delta)\sigma t} d\sigma \leq \int_0^1 \sigma^{-\al}(1-\sigma)^{-\beta}\leq 2^\beta\int_0^{\frac12} \sigma^{-\al} d\sigma + 2^\al \int_0^{\frac12}\sigma^{-\beta}d\sigma\leq \frac2{1-\al}+\frac2{1-\beta}\leq \frac4\eta,
\]
so that \eqref{eq:integralrewritten} yields the estimate we are aiming for if $1-\alpha-\beta\leq0$ or if $t<1$ and $1-\alpha-\beta >0$.
As to $1-\al-\beta>0$ and $t\ge 1$, we estimate
\begin{align*}
 \int_0^1& \sigma^{-\al}(1-\sigma)^{-\beta}e^{-(\gamma-\delta)\sigma t} d\sigma\\
&\leq\int_0^{\frac12 t^{-\frac{1-\al-\beta}{1-\al}}}\sigma^{-\al}(1-\sigma)^{-\beta}e^{-(\gamma-\delta)\sigma t} d\sigma +\int_{\frac12 t^{-\frac{1-\al-\beta}{1-\al}}}^1 \sigma^{-\al}(1-\sigma)^{-\beta}e^{-(\gamma-\delta)\sigma t}d\sigma\\
 &\leq \left(\nicefrac12\right)^{-\beta}\int_0^{\frac12 t^{-\frac{1-\al-\beta}{1-\al}}} \sigma^{-\al} d\sigma + \left(\frac12 t^{-\frac{1-\al-\beta}{1-\al}}\right)^{-\al}e^{-(\gamma-\delta){\frac12 t^{-\frac{1-\al-\beta}{1-\al}}}}\int_{\frac12 t^{-\frac{1-\al-\beta}{1-\al}}}^1(1-\sigma)^{-\beta}d\sigma\\
% \leq (\frac12)^{-\beta} \frac1{1-\al}({\frac12 t^{-\frac{1-\al-\beta}{1-\al}}})^{1-\al}+\frac{2^\al}{1-\beta}t^{\al\frac{1-\al-\beta}{1-\al}}e^{-\frac{\gamma-\delta}2 t^{\frac\beta{1-\al}}\\
 &\leq \frac{2^{\beta+\al-1}}{1-\al} t^{-(1-\al-\beta)} + \frac{2^\al}{1-\beta} t^{-(1-\al-\beta)} t^{1-\frac{\beta}{1-\al}}e^{-\frac{\gamma-\delta}2 t^{\frac\beta{1-\al}}}.
\end{align*}
Here,
\[
 t^{1-\frac{\beta}{1-\al}}e^{-\frac{\gamma-\delta}2 t^{\frac\beta{1-\al}}}\leq 1+te^{-\frac{\gamma-\delta}2 t^{\frac\beta{1-\al}}},
\]
%which % (is smaller than $2$ if $t\leq 1$ and) in case of $t>1$ obeys
where we have
\[
 \frac{\beta}{1-\al}\geq\beta, \qquad t^{\frac{\beta}{1-\al}}\geq t^\beta\geq t^\eta,
\]
because $t\geq 1$,
and hence
\[
 te^{-\frac{\gamma-\delta}2 t^{\frac\beta{1-\al}}}\leq te^{-\frac{\gamma-\delta}2 t^\beta} \leq te^{-\frac\eta2 t^\eta},
\]
which in combination with the finiteness of
\(
 \sup_{t>0} te^{-\frac\eta2 t^\eta}
\)
implies the assertion.
\end{proof}

In order to obtain regularity of $u$, we have employed the following result in the proof of Lemma \ref{lem:c2al}. Other than in \cite{fujiwara_morimoto}, we are concerned with the impact of the Helmholtz projection on H\"older-continuous functions (instead of on functions belonging to some $L^p$-space only.)
\begin{lem}\label{lem:projection_smoothness}
Let $\Om\subset\R^N$ be a bounded domain with $\partial\Om\in C^{1+\alpha}$ for some $\al>0$, $T>0$ let $u\in C^{\alpha,\frac\alpha2}(\Ombar\times[0,T])$ and $u=v+w$, where $\nabla\cdot v=0$ in $\Om$ and $v\cdot\nu=0$ on $\dOm$ and $w=\nabla\Phi$ for some function $\Phi$. Then $v\in C^{\alpha,\frac\alpha2}(\Ombar\times[0,T])$.
\end{lem}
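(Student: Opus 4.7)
The plan is to identify $\nabla\Phi$ with the solution of a Neumann problem for the Laplacian driven by $u$, and then to apply classical Schauder regularity theory for this problem, pointwise in time. For each fixed $t \in [0,T]$, I would define $\Phi(\cdot,t)$ as the weak solution in $W^{1,2}(\Om)$ (unique up to an additive constant, normalized e.g.\ by $\int_\Om \Phi(\cdot,t)=0$) of
\[
\int_\Om \nabla\Phi(\cdot,t)\cdot\nabla\varphi\,dx = \int_\Om u(\cdot,t)\cdot\nabla\varphi\,dx \qquad \forall \varphi \in W^{1,2}(\Om),
\]
corresponding to $\Delta\Phi(\cdot,t)=\nabla\cdot u(\cdot,t)$ in $\Om$ and $\partial_\nu\Phi(\cdot,t)=u(\cdot,t)\cdot\nu$ on $\dOm$ in the distributional sense; the compatibility condition is automatic by the divergence theorem. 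With this choice, $v(\cdot,t):=u(\cdot,t)-\nabla\Phi(\cdot,t)$ is indeed divergence-free in $\Om$ with vanishing normal trace on $\dOm$, so that $(v,\nabla\Phi)$ is the Helmholtz decomposition of $u$.

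For the spatial H\"older regularity of $v$, I would invoke Schauder estimates for the Neumann problem on $C^{1+\alpha}$-domains. Since $u(\cdot,t)\in C^\alpha(\Ombar)$ with a bound uniform in $t$, these estimates give $\nabla\Phi(\cdot,t)\in C^\alpha(\Ombar)$ together with
\[
\|\nabla\Phi(\cdot,t)\|_{C^\alpha(\Ombar)}\leq C\|u(\cdot,t)\|_{C^\alpha(\Ombar)}\leq C\|u\|_{C^{\alpha,\alpha/2}(\Ombar\times[0,T])},
\]
and hence $v(\cdot,t)\in C^\alpha(\Ombar)$ uniformly in $t$, which provides the purely spatial part of the desired regularity.

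For the temporal H\"older regularity I would exploit linearity of the decomposition: for $s,t\in[0,T]$ the difference $\Psi:=\Phi(\cdot,t)-\Phi(\cdot,s)$ solves the Neumann problem with data $U:=u(\cdot,t)-u(\cdot,s)$, on which two bounds are available, namely $\|U\|_{L^\infty(\Om)}\leq [u]_{t,\alpha/2}|t-s|^{\alpha/2}$ from temporal H\"older continuity of $u$ and $[U]_{C^\alpha(\Ombar)}\leq 2[u]_{x,\alpha}$ from spatial H\"older continuity. Combining them via the standard interpolation $[U]_{C^\beta}\leq C\|U\|_{L^\infty}^{1-\beta/\alpha}[U]_{C^\alpha}^{\beta/\alpha}$ for $\beta\in(0,\alpha)$, and then invoking the $C^\beta$-Schauder estimate $\|\nabla\Psi\|_{C^\beta}\leq C\|U\|_{C^\beta}$, yields the desired H\"older-in-time control of $v(\cdot,t)-v(\cdot,s)$ in the supremum norm. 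The main obstacle is to recover the sharp temporal exponent $\alpha/2$ rather than the slightly smaller $(\alpha-\beta)/2$ delivered by this crude interpolation: achieving this would require a finer mapping property of the Helmholtz projection $\calP$, such as its $C^\alpha\to C^\alpha$ boundedness on $C^{1+\alpha}$-domains, together with the additional structure of $U$ as a pure time-difference of a parabolic-H\"older function, so that $\|\calP U\|_{L^\infty}$ inherits the $|t-s|^{\alpha/2}$-scaling of $\|U\|_{L^\infty}$ directly rather than through a lossy interpolation.
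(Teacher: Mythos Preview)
Your approach is exactly the one the paper takes: solve the Neumann problem $\Delta\Phi=\nabla\cdot u$, $\partial_\nu\Phi=u\cdot\nu$ for each fixed $t$, invoke the Schauder estimate $\|\Phi\|_{C^{1+\alpha}}\le C\|u\|_{C^\alpha}$, and then exploit linearity on the time-difference $\Psi=\Phi(\cdot,t_2)-\Phi(\cdot,t_1)$ with data $U=u(\cdot,t_2)-u(\cdot,t_1)$. The spatial $C^\alpha$-part goes through cleanly in both versions.

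The subtlety you flag for the temporal exponent is real, and the paper's own proof glosses over precisely the same point. There the final step reads
\[
\|w(\cdot,t_2)-w(\cdot,t_1)\|_{C^{\alpha}(\Ombar)}\le c_6\,\|u(\cdot,t_2)-u(\cdot,t_1)\|_{C^{\alpha}(\Ombar)},
\]
followed by the assertion that $w\in C^{\alpha,\alpha/2}$ ``by the known regularity of $u$''. But membership $u\in C^{\alpha,\alpha/2}$ only yields $\|U\|_{L^\infty}\le C|t_2-t_1|^{\alpha/2}$, while the seminorm $[U]_{C^\alpha}$ is merely bounded by $2[u]_{x,\alpha}$ and does not decay with $|t_2-t_1|$. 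Since the Helmholtz projection, being built from Calder\'on--Zygmund--type operators, is \emph{not} bounded on $L^\infty$, one cannot recover $\|\nabla\Psi\|_{L^\infty}\le C\|U\|_{L^\infty}$ directly, and the sharp $\alpha/2$ exponent does not follow from the displayed inequality alone.

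Your interpolation repair is the right way to salvage the argument: it yields $v\in C^{\gamma,\gamma/2}$ for every $\gamma<\alpha$, and that is all the paper actually needs. The sole consumer of this lemma is the proof of Lemma~\ref{lem:c2al}, where one must show that $\calP(\xi(u\cdot\nabla)u+\xi n\nabla\Phi+\xi' u)$ lies in $C^{\gamma_3,\gamma_3/2}$ for \emph{some} $\gamma_3>0$ in order to apply Solonnikov's Schauder estimates for the Stokes system; any positive H\"older exponent suffices there. So your argument, as written, closes the application even though it falls short of the lemma's stated conclusion. The clean fix is simply to weaken the lemma to ``$v\in C^{\gamma,\gamma/2}(\Ombar\times[0,T])$ for every $\gamma\in(0,\alpha)$'', which is what both your proof and the paper's proof actually establish.
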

\begin{proof}
 We have to find a decomposition $u=v+w$ with $\na\cdot v=0$ in $\Om$ and $v\cdot\nu=0$ on $\dOm$ and $w=\na \Phi$ for some function $\Phi$. We will construct $w$ and conclude from its smoothness that $\P u=v=u-w\in C^{\alpha,\frac\alpha2}(\Ombar\times[0,T];\R^{N})$.
 As preparation let us consider the elliptic problem
 \begin{equation}\label{eq:elliptic}
  \Delta\Phi= \na\cdot f,\qquad \delny \Phi\bdry=f\cdot\nu\bdry, \qquad \io \Phi=0.
 \end{equation}

 Only assuming $f\in C^\alpha(\Ombar)$, we fix $p>n$ and let $q$ be such that $\frac1p+\frac1q=1$. Then \cite[Thm. 4.1]{Simader_weakDirichletandNeumann}, which mirrors the usual Lax-Milgram type result in the context of $L^p$-spaces also for $p\neq 2$, asserts the existence of a unique weak solution $\Phi\in \set{\Phi\in W^{1,p}(\Om), \io \Phi=0}$ such that
 \[
  \io \na \Phi\cdot\na \Phii = \io f\na\Phii \qquad \mbox{for all } \Phii\in W^{1,q}(\Om).
 \]
 Moreover, this solution satisfies
\begin{align}\label{eq:elliptic_estimate1}
 c_1\norm[L^\infty(\Om)]{\Phi}\leq& c_2\norm[W^{1,p}(\Om)]{\Phi}\leq \norm[L^p(\Om)]{\na \Phi}\nn\\
 \leq& c_3\sup \bigset{\frac{\left\lvert \io f\na\Phii\right\rvert}{\norm[L^q(\Om)]{\na\Phii}};\, \Phii\in W^{1,q}(\Om), \na\Phii\not\equiv 0}
 \leq c_3 \norm[L^p(\Om)]{f} \leq c_4\norm[C^\al(\Ombar)]{f}
\end{align}
with positive constants $c_1$, $c_2$, $c_3$ and $c_4$ that are guaranteed to exist by the continuity of the embedding $W^{1,p}(\Om)\embeddedinto L^\infty(\Om)$, Poincar\'e's inequality, \cite[Thm. 4.1]{Simader_weakDirichletandNeumann} and continuity of the embedding $C^{\al}(\Om)\embeddedinto L^p(\Om)$, respectively.
A standard elliptic regularity result (see \cite[Thm. 2.8]{BeiHu}) moreover asserts the existence of $c_5>0$ such that $C^{1+\alpha}$-solutions $\Phi$ of \eqref{eq:elliptic} satisfy
\[
 \norm[C^{1+\al}(\Om)]{\Phi}\leq c_5(\norm[C^\al(\Ombar)]{f}+\norm[L^\infty(\Om)]{\Phi})
\]
and thus, taking into account \eqref{eq:elliptic_estimate1},
\[
 \norm[C^{1+\alpha}(\Ombar)]{\Phi} \leq c_6\norm[C^\alpha(\Ombar)]{f}
\]
with $c_6:=c_5(1 + \frac{c_4}{c_1})$.\\
Approximating $f\in C^\alpha(\Ombar)$ by a sequence of functions $\{f_n\}_{n\in\N}\sub C^\infty(\Ombar)$ for which the existence of classical solutions $\Phi_n\in C^{2+\alpha}(\Ombar)$ is asserted by well-known results (\cite[Thm. 3.3.2]{LU}), we see that for $f\in C^\alpha(\Ombar)$ problem \eqref{eq:elliptic} has a unique solution $\Phi\in C^{1+\alpha}(\Ombar)$, which moreover satisfies
\begin{equation}\label{eq:ell_c1alpha_estimate}
 \norm[C^{1+\alpha}(\Ombar)]{\Phi}\leq c_6\norm[C^\alpha(\Ombar)]{f}.
\end{equation}

For each $t$ let $\Phi(\cdot,t)$ denote the solution of
 \[
  \Delta \Phi(\cdot,t)=\na\cdot u(\cdot,t), \qquad \delny \Phi(\cdot,t)\bdry = u(\cdot,t)\cdot \nu\bdry, \qquad \io \Phi=0,
 \]
and define $w(\cdot,t):=\nabla \Phi(\cdot,t)$ and $v(\cdot,t):=u(\cdot,t)-w(\cdot,t)$, so that clearly $\na\cdot v=\na\cdot u-\na\cdot w=\na\cdot u-\Delta \Phi=0$ in $\Om$ and $v\cdot\nu=u\cdot\nu-w\cdot\nu=u\cdot\nu-\delny\Phi=0$ on $\dOm$.
Concerning smoothness, we see that $\Phi(\cdot,t)\in C^{1+\al}(\Ombar)$ entails $w(\cdot,t)\in C^{\alpha}(\Ombar)$ and for $t_1,t_2\in[0,T]$ we have that $\Phi(\cdot,t_2)-\Phi(\cdot,t_1)={:}\Psi$ solves
\[
 \Delta \Psi = \na\cdot(u(\cdot,t_2)-u(\cdot,t_1)),\qquad \delny \Psi\bdry = (u(\cdot,t_2)-u(\cdot,t_1)) \cdot \nu, \qquad \io \Psi=0
\]
so that by \eqref{eq:ell_c1alpha_estimate} %there is $\Chat>0$ such that
\[
 \norm[C^{\al}(\Ombar)]{w(\cdot,t_2)-w(\cdot,t_1)}\leq \norm[C^{1+\alpha}(\Ombar)]{\Psi} \leq {c_6} \norm[C^{\al}(\Ombar)]{u(\cdot,t_2)-u(\cdot,t_1)}.
\]
By the known regularity of $u$, in conclusion we have $w\in C^{\alpha,\frac\alpha2}(\Ombar\times[0,T])$ and thus $\P u=v=u-w\in C^{\alpha,\frac\alpha2}(\Ombar\times[0,T];\R^N)$.
\end{proof}

The last statement we have postponed to this appendix is concerned with the adaptions necessary for proving Theorem \ref{thm:alternative} instead of Theorem \ref{thm:main}.

\begin{lem}\label{lem:chooseM_alternative}
Given $M, N, p_0, q_0, \beta$ as in Theorem 1.2 and some $\delta>0$, it is possible to choose $M_1$, $M_2$, $M_3$, $M_4$, $\eps>0$, $m_0<\eps|\Om|^{-\frac1{p_0}}$ such that for all $m>m_0$, for all $\alpha_1\in(\frac{m}2,\min\set{m,\lambda_1-\delta})$ and $\alpha_2\in(0,\min\set{\alpha_1,\lambda_1'-\delta})$ the inequalities
\begin{align*}
&k_7(N,q_0)+k_5(q_0)k_7(q_0,q_0)(M_1+2k_1)\norm[\Lin]{\na\Phi} C_1+3k_7({\scriptstyle \frac{N}{1+\frac{N}{q_0}}},q_0) k_5({\scriptstyle\frac{N}{1+\frac{N}{q_0}}})M_3M_4C_2\eps\leq \frac{M_3}2\\
&k_8(N,N)+k_8(N,N)k_5(N)|\Om|^{\frac{q_0-N}{Nq_0}}(M_1+2k_1)\norm[\Lin]{\na\Phi}C_3\\
&\quad\quad\quad\quad\quad\quad\quad\quad\quad\quad\quad
+3 M_3M_4k_8({\scriptstyle \frac{1}{\frac1{q_0}+\frac1N}},N) k_5({\scriptstyle \frac1{\frac1{q_0}+\frac1N}})C_4{\eps}\leq\frac{M_4}2\\
&k_3+{C_5k_2(|\Om|^{-\frac1{p_0}}+M_1+2k_1)Me^{(M_1+2k_1)\sigma\eps}\eps}+3k_2M_2M_3 C_6\eps \le \frac{M_2}2\\
&3C_SC_7k_4M_2\eps|\Om|^{-\frac{1}{p_0}}+ 3C_SC_7k_4M_2(M_1+2k_1)\eps+3(M_1+{2k_1})C_7k_4M_3\eps\le \frac{M_1}2.
\end{align*}
 hold.
\end{lem}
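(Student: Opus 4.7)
The plan is to mimic the constructive pattern of the proof of Lemma \ref{lem:chooseM}, while rearranging the order in which the $M_i$'s are fixed to reflect a slightly different structure. Compared to the earlier lemma, the crucial observations are: (a) in the new $M_2$-inequality the only summand that does not carry an explicit factor $\eps$ is $k_3$, the middle summand now carrying $\eps$ via the explicit factor to the right of $M$; (b) in the new $M_1$-inequality \emph{every} summand carries $\eps$, so $M_1$ can be chosen freely at the very start; (c) the quantity $m|\Om|^{1/q_0}$ appearing in Lemma \ref{lem:chooseM} has been replaced by $\eps|\Om|^{-1/p_0}$, which is itself already small; and (d) the uniform smallness of all remaining $\eps$-laden pieces in each inequality will, as before, come from shrinking $\eps$ at the end.

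Concretely, I would first fix $M_1$ (for instance $M_1:=1$), and set $M_2:=8k_3$ so that $k_3\le M_2/8$. Next I would choose
\[
M_3 := 8\bigl(k_7(N,q_0)+k_5(q_0)k_7(q_0,q_0)(M_1+2k_1)\norm[L^\infty(\Om)]{\nabla\Phi}\,C_1\bigr)
\]
and
\[
M_4 := 8\bigl(k_8(N,N)+k_8(N,N)k_5(N)|\Om|^{(q_0-N)/(Nq_0)}(M_1+2k_1)\norm[L^\infty(\Om)]{\nabla\Phi}\,C_3\bigr),
\]
so that the $\eps$-independent contributions to the $M_3$- and $M_4$-inequalities are each $\le M_3/8$ respectively $\le M_4/8$. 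Finally I would take $\eps>0$ small enough so that $(M_1+2k_1)\sigma\eps\le 1$ (to bound the exponential $e^{(M_1+2k_1)\sigma\eps}$ by $e$) and so that, simultaneously, every $\eps$-bearing summand in any of the four target inequalities is bounded by the appropriate fraction of $M_i/2$. Each such requirement has the form ``a constant depending only on the previously fixed $M_1,\dots,M_4$ and on the data times $\eps$ is less than a positive number'', so finitely many such conditions can be met by taking $\eps$ to be the minimum over the list. With this $\eps$ in hand, pick any $m_0\in(0,\eps|\Om|^{-1/p_0})$; then each of the four stated inequalities holds for every admissible $m$, $\alpha_1$, $\alpha_2$ by direct insertion.

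The only subtlety, which is the main obstacle worth highlighting, is that the constants $C_1,\dots,C_7$ implicitly depend on $\alpha_1$ and $\alpha_2$ through the parameter $\eta$ in Lemma \ref{lem:integralestimates}. To make the choice of $M_1,\dots,M_4,\eps,m_0$ uniform in $m$, $\alpha_1$, $\alpha_2$, I would first use the hypothesis $\alpha_1<\lambda_1-\delta$, $\alpha_2<\lambda_1'-\delta$ together with the selection $\mu\in(\alpha_2,\lambda_1')$ to secure an $\eta=\eta(\delta)>0$ bounded below uniformly in the allowed range of parameters. This keeps the constant $C(\eta)$ of Lemma \ref{lem:integralestimates}, and hence also $C_1,\dots,C_7$, uniformly bounded, so they can legitimately be fixed \emph{once} before the choice of the $M_i$'s and $\eps$. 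The remainder is routine arithmetic of the same flavour as in the proof of Lemma \ref{lem:chooseM}.
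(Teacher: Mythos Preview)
Your overall plan is sound and closely parallels the paper's proof, and you correctly identify the subtlety concerning the $\alpha_1,\alpha_2$-dependence of $C_1,\dots,C_7$ and neutralize it via $\delta$. However, there is a genuine gap: you overlook that $\sigma$ from \eqref{eq:defsigma} is not a fixed constant here but depends on $\alpha_1$ through
\[
  \sigma=\sigma(\alpha_1)=\int_0^\infty\Bigl(1+s^{-\frac{N}{2p_0}}\Bigr)e^{-\alpha_1 s}\,ds,
\]
and $\sigma(\alpha_1)\to\infty$ as $\alpha_1\to 0$. In the present lemma (unlike in Lemma~\ref{lem:chooseM}) the constants $M_1,\dots,M_4,\eps$ must be chosen \emph{before} $m$ and $\alpha_1$, and must work for all $\alpha_1\in(\frac m2,\min\{m,\lambda_1-\delta\})$ and all $m>m_0$. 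Since you allow ``any $m_0\in(0,\eps|\Om|^{-1/p_0})$'', admissible $\alpha_1$ can be arbitrarily small, so your requirement $(M_1+2k_1)\sigma\eps\le 1$ cannot be secured by shrinking $\eps$ alone: for fixed $\eps$, taking $\alpha_1$ small makes $\sigma\eps$ as large as you like.

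The paper resolves this by noting that $\sigma(\alpha_1)\le \tfrac{4}{m}+\tfrac{2p_0}{2p_0-N}$ for $\alpha_1>\tfrac m2$, so that the product $\eps\,\sigma(\alpha_1)$ is controlled not by smallness but by the \emph{bounded} quantity $8|\Om|^{1/p_0}+\tfrac{2p_0}{2p_0-N}$ when $m$ is near $\eps|\Om|^{-1/p_0}$. One therefore first fixes a number $A$ exceeding $(M_1+2k_1)$ times this bound, chooses $M_2$ so that $k_3+C_5k_2(|\Om|^{-1/p_0}+M_1+2k_1)Me^{A}A\le \tfrac{M_2}{4}$, imposes $\eps<A$, and only then selects $m_0$ slightly below $\eps|\Om|^{-1/p_0}$ (by continuity of $\sigma$) so that $(M_1+2k_1)\sigma(\tfrac{m_0}{2})\eps<A$. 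The choice of $m_0$ is thus not arbitrary, and the exponential is bounded by $e^A$ rather than by $e$; your argument needs this refinement to close.
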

\begin{proof}
 The condition $m_0<\eps|\Om|^{-\frac1{p_0}}$ that is used to ensure the existence of initial data satisfying \eqref{init_small'} compells us to choose $m_0$ at the end of this proof, quite in contrast to the situation in Lemma \ref{lem:chooseM}.
 Furthermore this makes it necessary to have the estimates during the proof hold regardless of the values of $\alpha_1$, $\alpha_2$, which depend on $m$.
Fortunately, $C_1, \ldots, C_7$ indeed do not depend on $\alpha_1$, $\alpha_2$ (and thus not on $m$), but -- thanks to Lemma \ref{lem:integralestimates} -- rather on (a lower bound for) the differences between $\mu$ and $\alpha_1$, $\mu$ and $\alpha_2$ or $\lambda_1$ and $\alpha_1$. (This is the purpose $\delta$ has been introduced for.)
The only remaining parameter is $\sigma=\sigma(\alpha_1)=\int_0^\infty(1+s^{-\frac N{2p_0}}) e^{-\alpha_1 s} ds$, which is decreasing with respect to $\alpha_1$. If we decide to concentrate on relatively ``large'' values of $\alpha_1$ only, namely $\alpha_1>\frac{m}2$, (which is of no effect to the generality of Theorem 1.2), given $m>0$, for any $\alpha_1\in(\frac{m}2,\min\set{m,\lambda_1-\delta})$, we may rely on
\[
 \sigma(\alpha_1)\leq \int_0^\infty \left(1+s^{-\frac{N}{2p_0}}\right)e^{-\frac m2 s} ds \leq 2\int_0^\infty e^{-\frac m2 s}ds +\int_0^1 s^{-\frac N{2p_0}} ds \leq \frac 4m+\frac{2p_0}{2p_0-N}.
\]
We pick arbitrary $M_1>0$ and
\begin{equation}\label{eq:chooseA}
 A>(M_1+2k_1)\left(8|\Om|^{\frac1{p_0}}+\frac{1}{1-\frac{N}{2p_0}}\right).
\end{equation}
Moreover, we can choose $M_2$ such that $k_3+C_5k_2(|\Om|^{-\frac1{p_0}}+M_1+2k_1)Me^{A}A \leq \frac{M_2}4$ and $M_3$ such that $k_7(N,q_0)+k_5(q_0)k_7(q_0,q_0)(M_1+2k_1)\norm[\Lin]{\na\Phi} C_1\leq \frac{M_3}4$, and we choose $M_4$ \\ such that \mbox{$k_8(N,N)+k_8(N,N)k_5(N)|\Om|^{\frac{q_0-N}{Nq_0}}(M_1+2k_1)\norm[\Lin]{\na\Phi}C_3\leq \frac{M_4}4$.} Then we let
\begin{align*}
 0<\eps<&\min\bigg\{
%\frac{A}{(M_1+2k_1)\left[4|\Om|^{-\frac1{p_0}}+\frac{1}{1-\frac{N}{2p_0}}\right]},
 A,
% \frac{A}{(M_1+2k_1)\sigma},
\frac{1}{12k_2M_3C_6},
\frac{1}{12M_3k_8({\scriptstyle \frac{1}{\frac1{q_0}+\frac1N}},N)k_5({\scriptstyle \frac1{\frac1{q_0}+\frac1N}})C_4},\\
&\frac{1}{12k_7({\scriptstyle \frac{N}{1+\frac{N}{q_0}}},q_0)k_5({\scriptstyle\frac{N}{1+\frac{N}{q_0}}})C_2M_4},
\frac{M_1}{2( 3C_SC_7k_4M_2{(|\Om|^{-\frac{1}{p_0}}+M_1+2k_1)}+3(M_1+{2k_1})C_7k_4M_3)},
1
\bigg\}
\end{align*}
Finally, we want to choose $m_0<\eps|\Om|^{-\frac1{p_0}}$ such that $(M_1+2k_1)\sigma(\alpha_1)\eps<A$ for all $\alpha_1\in( \frac{m}2,\min\set{m,\lambda_1-\delta})$, for all $m>m_0$. This is indeed feasible, since
$\sigma(\frac\eps2|\Om|^{-\frac1{p_0}})<\frac{A}{(M_1+2k_1)\eps}$ due to
\[
 \eps\sigma\left(\frac\eps2|\Om|^{-\frac1{p_0}}\right) <\eps \left(\frac{8}{\eps|\Om|^{-\frac1{p_0}}}+\frac{2p_0}{2p_0-N}\right)\leq 8|\Om|^{\frac1{p_0}}+\frac{2p_0}{2p_0-N}<\frac{A}{M_1+2k_1}%used: \eps<1
\]
and by continuity we can find $m_0<\eps|\Om|^{-\frac1{p_0}}$ so that $\sigma(\frac{m_0}2)<\frac{A}{(M_1+2k_1)\eps}$.
With this choice, for all $\alpha_1\in( \frac{m}2,\min\set{m,\lambda_1-\delta})$, for all $m>m_0$, we have $\sigma(\alpha_1)<\sigma(\frac{m}{2})<\sigma(\frac{m_0}2)<\frac{A}{(M_1+2k_1)\eps}$.
\end{proof}

{\footnotesize
%\bibliographystyle{abbrv}
%\bibliography{CTNSrot_literature}

\def\cprime{$'$}

}

\end{document}